\newcommand{\textaut}{\textsc}
\newcommand{\texttit}{\textit}
\newcommand{\textresto}{\textup}
\newcommand{\VSpc}[2]{\vrule height#1 width-1pt depth#2} 
\newtheorem{theorem}{Theorem}[section]
\newtheorem{definition}{Definition}[section]
\newtheorem{lemma}[theorem]{Lemma}
\newtheorem{proposition}[theorem]{Proposition}
\newtheorem{corollary}[theorem]{Corollary}
\newtheorem{conjecture}[theorem]{Conjecture}
\newtheorem*{example}{Example}
\newtheorem*{remark}{Remark}
\newcommand{\matr}[4]{
\left( \begin{array}{cc} #1 & #2 \\ #3 & #4 \end{array} \right)}
\begin{document}

\title{The entropy of $\alpha$-continued fractions:\\ numerical results}
\author{\begin{small}\textsc{Carlo Carminati, Stefano Marmi, Alessandro Profeti, Giulio Tiozzo}\end{small}}
\date{28 November, 2009}
\maketitle

\begin{abstract}
We consider the one-parameter family of interval maps arising from
generalized continued fraction expansions known as $\alpha$-continued
fractions. For such maps, we perform a numerical study of the
behaviour of metric entropy as a function of the parameter. 
The behaviour of entropy is known to be quite regular for parameters 
for which a \emph{matching condition} on the orbits of the endpoints holds. 
We give a detailed description of the set $\mathcal{M}$ where this condition is met: 
it consists of a countable union of open intervals, corresponding to different combinatorial 
data, which appear to be arranged in a hierarchical structure.
Our experimental data suggest that the complement of $\mathcal{M}$ is a proper subset of 
the set of bounded-type numbers, hence it has measure zero.
Furthermore, we give evidence that the entropy on matching intervals is smooth;
on the other hand, we can construct points outside of $\mathcal{M}$ on which it is not even locally monotone.
\end{abstract}

\section{Introduction}

Let $\alpha \in [0,1]$. We will study the one-parameter family of
one-dimensional maps of the interval
$$T_{\alpha} : [\alpha-1, \alpha] \rightarrow [\alpha-1, \alpha] $$
$$T_{\alpha}(x) = \left\{\begin{array}{ll} \frac{1}{|x|} - \left\lfloor \frac{1}{|x|} + 1 - \alpha \right\rfloor & \textup{if }x \neq 0 \\ 0 & \textup{if }x = 0 \end{array} \right.$$

If we let $x_{n, \alpha} = T_{\alpha}^n(x)$, $a_{n, \alpha} = \left\lfloor 
\frac{1}{|x_{n-1, \alpha}|} + 1 - \alpha \right\rfloor$, $\epsilon_{n, \alpha} = \textup{Sign}(x_{n-1, \alpha})$, 
then for every $x \in [\alpha-1, \alpha]$ we get the expansion
$$ x = \frac{\epsilon_{1, \alpha}}{a_{1, \alpha} + \frac{\epsilon_{2,
\alpha}}{a_{2, \alpha} +\phantom{x}\raisebox{-1.4ex}{\scalebox{0.8}{\hbox{$\ddots$}}}}}$$
with $a_{i, \alpha}\in \mathbb{N}, \epsilon_{i, \alpha} \in\{\pm 1\}$ which we call \emph{$\alpha$-continued fraction}. These systems were introduced by Nakada (\cite{Nakada81}) and are also known in the literature as $\emph{Japanese continued fractions}$.

The algorithm, analogously to Gauss' map in the classical case, provides rational approximations of real numbers. The convergents $\frac{p_{n, \alpha}}{q_{n, \alpha}}$ are given by

$$ \left\{ \begin{array}{ccc} p_{-1, \alpha} = 1 & p_{0, \alpha} = 0 & p_{n+1, \alpha} = \epsilon_{n+1, \alpha} p_{n-1, \alpha} + a_{n+1, \alpha}p_{n, \alpha} \\ q_{-1, \alpha} = 0 & q_{0, \alpha} = 1 & q_{n+1, \alpha} = \epsilon_{n+1, \alpha} q_{n-1, \alpha} + a_{n+1, \alpha}q_{n, \alpha} \end{array} \right.$$
It is known (see \cite{LuzziMarmi}) that for each $\alpha \in (0,1]$ there exists a unique invariant measure $\mu_{\alpha}(dx) = \rho_{\alpha}(x)dx$ absolutely continuous w.r.t. Lebesgue measure.

\begin{figure}[h]
\centering
\includegraphics[scale=0.4]{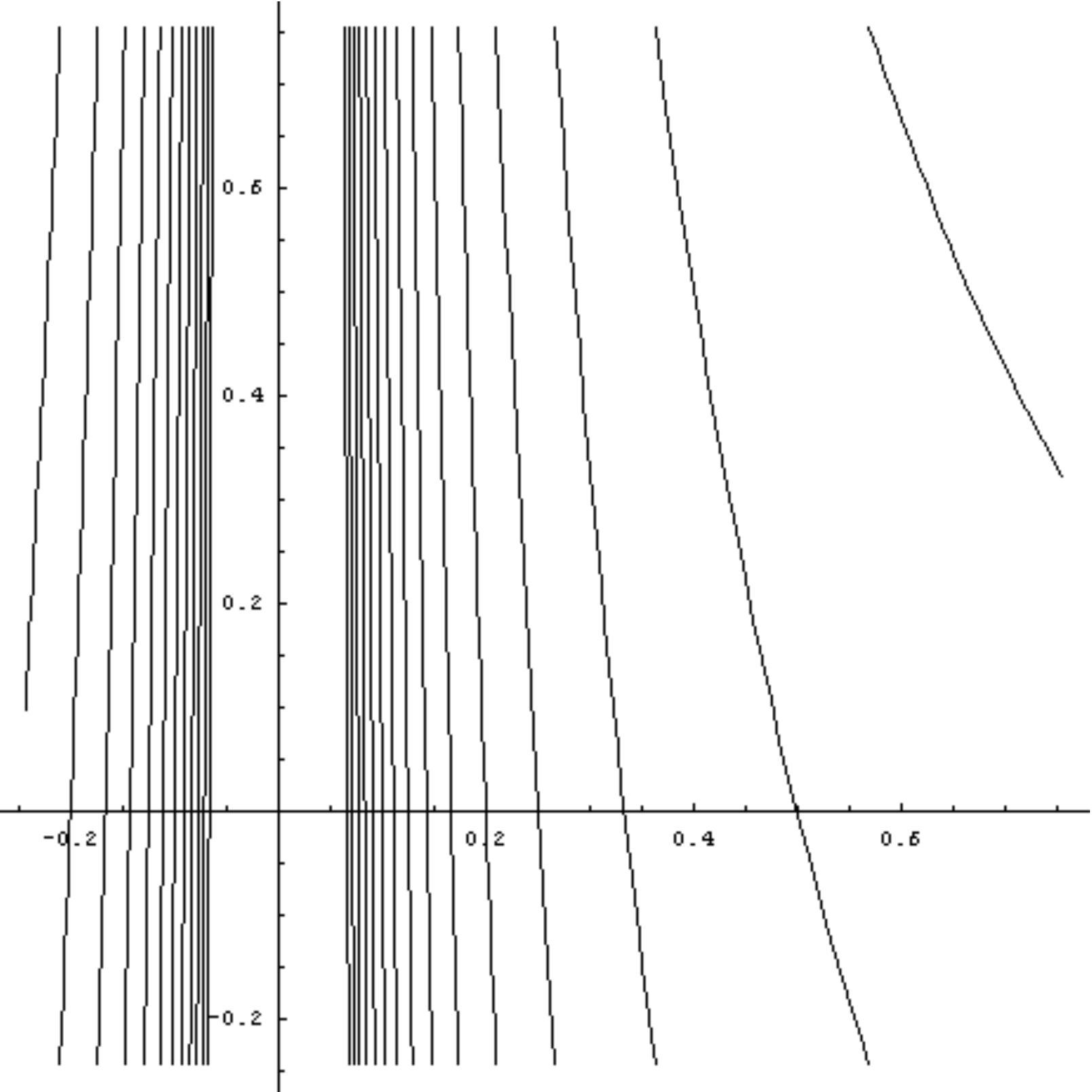}
\caption{Graph of $T_\alpha$} 
\end{figure}

In this paper we will focus on the metric entropy of the $T_{\alpha}$'s, which by Rohlin's formula (\cite{Rohlin}) is given by
$$ h(T_{\alpha}) = -2 \int_{\alpha-1}^{\alpha} \log |x| \rho_{\alpha}(x) dx $$
Equivalently, entropy can be thought of as the average exponential growth rate of the denominators of convergents:
for $\mu_{\alpha}$-a.e. $x \in [\alpha-1, \alpha]$,

$$h(T_{\alpha}) = 2 \lim_{n \rightarrow \infty} \frac{1}{n} \log q_{n, \alpha}(x)$$

The exact value of $h(T_{\alpha})$ has been computed for $\alpha \geq \frac{1}{2}$ by Nakada (\cite{Nakada81}) and for $\sqrt{2}-1 \leq \alpha \leq \frac{1}{2}$ by Cassa, Marmi and Moussa  (\cite{MarmiMoussaCassa}). 

In \cite{LuzziMarmi}, Luzzi and Marmi computed numerically the entropy for $\alpha \leq \sqrt{2}-1$
by approximating the integral in Rohlin's formula with Birkhoff averages
 
$$h(\alpha, N, x) = -\frac{2}{N} \sum_{j = 0}^{N-1} \log |T_{\alpha}^j(x)|$$
for a large number $M$ of starting points $x \in (\alpha-1, \alpha)$ and then averaging over the samples: 
$$h(\alpha, N ,M) = \frac{1}{M} \sum_{k = 1}^M h(\alpha, n , x_k)$$
Their computations show a rich structure for the behaviour of the
entropy as a function of $\alpha$; it seems that the function $\alpha
\mapsto h(T_\alpha)$ is piecewise regular and changes monotonicity on
different intervals of regularity.

These features have been confirmed by some results by Nakada and
Natsui (\cite{NakadaNatsui}, thm. 2) which give a \emph{matching condition}
on the orbits of $\alpha$ and $\alpha-1$ 
$$T^{k_1}_{\alpha}(\alpha) = T_{\alpha}^{k_2}(\alpha-1)\quad \textup{for some }k_1, k_2 \in \mathbb{N}$$
which allows to find countable families of intervals where the entropy is
increasing, decreasing or constant (see section \ref{matchingconditions}). It is not
difficult to check that the numerical data computed {\em via} Birkhoff
theorem fit extremely well with the matching intervals of \cite{NakadaNatsui}.
\begin{figure}[!h]
   \centering
   \includegraphics[scale=0.25]{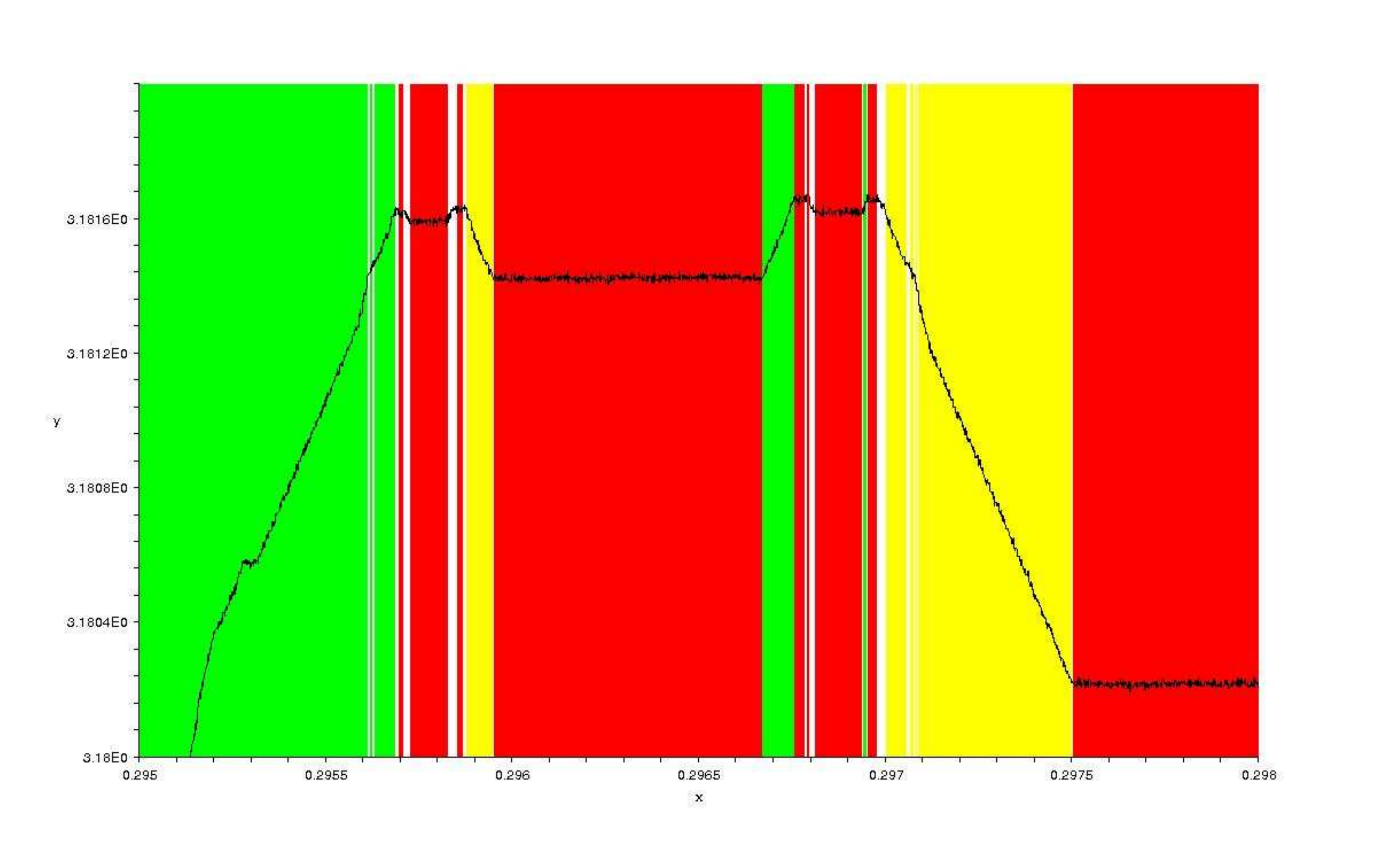}
   \caption{Numerical data vs. matching intervals}
   \label{MMvsNN}
  \end{figure}

In this paper we will study the matching condition in great detail. First of all, we analyze the
mechanism which produces it from a group-theoretical point
of view and find an algorithm to relate the $\alpha$-continued
fraction expansion of $\alpha$ and $\alpha-1$ when a matching
occurs. This allows us to understand the combinatorics behind the matchings
once and for all, without having to resort to specific matrix identities. As an example, 
we will explicitly construct a family of matching intervals which accumulate on a point different
from $0$. In fact we also have numerical evidence that there exist positive values, 
such as  $[0,3,\bar{1}]$, which are  cluster point for intervals of all the three matching types:
with $k_1<k_2$, $k_1=k_2$ and $k_1>k_2$.

We then describe an algorithm to produce a huge
quantity of matching intervals, whose exact endpoints can be found
computationally, and analyze the data thus obtained. These data show
matching intervals are organized in a hierarchical structure, and we
will describe a recursive procedure which should produce such structure. 

Let now $\mathcal{M}$ be the union of all matching intervals. 
It has been conjectured (\cite{NakadaNatsui}, sect. 4, pg. 1213) that $\mathcal{M}$ is an open, dense set of full Lebesgue measure. In fact, the correctness of our scheme would imply the following stronger 
\begin{conjecture} \label{nbounded}
For any $n$, all elements of $(\frac{1}{n+1}, \frac{1}{n}] \setminus \mathcal{M}$ have regular continued fraction expansion bounded by $n$.
\end{conjecture}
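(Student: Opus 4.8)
The plan is to derive Conjecture~\ref{nbounded} from the recursive structure of matching intervals hinted at above, by a nested-interval argument carried out separately inside each window $I_n:=(\tfrac1{n+1},\tfrac1n]$. The first step is to make the first iterate of the algorithm explicit on $I_n$. For $\alpha\in I_n$ one has $1/\alpha\in[n,n+1)$, so the digit of the right endpoint is $n$ or $n+1$ (with a single transition inside the window), and similarly the digit of the left endpoint $\alpha-1$ is confined to a bounded range; hence $T_\alpha(\alpha)$ and $T_\alpha(\alpha-1)$ are explicit M\"obius functions of $\alpha$ on each piece. Using this one isolates a distinguished \emph{level-one} matching interval $J_n\subset I_n$ on which a matching of small depth holds for every $\alpha$ (for $n=1$ this is the analogue of the Nakada interval around $g=\tfrac{\sqrt5-1}2$). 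The complement $I_n\setminus J_n$ then breaks into finitely many subintervals, and on each of them, after one further M\"obius change of variables — the one that prescribes the next regular partial quotient of $\alpha$ — the picture reproduces the original window, now indexed by $\alpha$-continued-fraction data that are one symbol longer.

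The second step is the induction: inside each gap surviving at level $k$ the same analysis produces a level-$(k{+}1)$ matching interval, and the gaps remaining after level $k{+}1$ correspond precisely to the real numbers whose first $k+1$ regular partial quotients all lie in the finite admissible alphabet — which, because we never leave $I_n$ and each restriction is forced to keep the leading digit $\le n$, is $\{1,\dots,n\}$. Letting $k\to\infty$, a point $\alpha\in I_n$ that avoids \emph{every} matching interval must lie in the nested intersection of these gaps, and that intersection is exactly the set of $\alpha\in I_n$ whose entire regular continued fraction is bounded by $n$. Since $\mathcal M$ is open, $I_n\setminus\mathcal M$ is contained in this limit set, which is Conjecture~\ref{nbounded}; the measure-zero claim of the abstract then follows because bounded-type sets are Lebesgue-null.

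The genuinely hard part, and the reason the statement is phrased as a conjecture, is proving that this recursion closes up without leaving anything behind. Two things must be established: \emph{(i)} that each interval constructed at each level really does satisfy $T^{k_1}_\alpha(\alpha)=T^{k_2}_\alpha(\alpha-1)$ for all of its $\alpha$ — here one wants to use the group-theoretic description of Section~\ref{matchingconditions} to \emph{propagate} the matching identity from a parent interval to its descendants, rather than re-verifying a matrix identity by hand each time; and \emph{(ii)} that the union of all the intervals so produced exhausts $I_n$ up to the bounded-type Cantor set, i.e.\ that no additional gap ever opens that is not itself eventually filled at some deeper level. Step~(ii) is the main obstacle: it is precisely the assertion that our enumeration scheme is complete, and establishing it rigorously would require a careful combinatorial bookkeeping of how the admissible symbolic data branch from one level to the next, together with a proof that the resulting intervals are contiguous and that their complement does not accidentally pick up numbers with a partial quotient $>n$. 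Our numerical experiments strongly support both (i) and (ii), but a full proof is left open.
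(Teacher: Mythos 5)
You correctly recognize that this statement is a conjecture rather than a theorem, and you correctly identify the two obstacles: proving that each recursively generated interval is a genuine matching interval, and proving that the enumeration is exhaustive. The conditional reduction you propose is the same one the paper makes: the validity of the recursive scheme would imply Conjecture~\ref{nbounded}.

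However, the intermediate argument you sketch diverges from the paper's and, as stated, does not go through. The paper's recursion is the pseudocenter bisection of subsection~\ref{bintree}, not the M\"obius-renormalization picture you invoke: from each gap $J$ one removes $I_r$, where $r$ is the rational of smallest denominator in $J$, producing nested families $\mathcal{G}_n$. The unconditional technical result that makes the reduction work — and which is missing from your proposal — is Lemma~\ref{lemmanbounded}: every $\gamma\in\mathcal{G}_\infty\cap(\tfrac{1}{n+1},\tfrac1n]$ has regular continued fraction bounded by $n$. Your version of this step asserts that the gaps remaining after level $k+1$ correspond precisely to the real numbers whose first $k+1$ partial quotients lie in $\{1,\dots,n\}$, but the Stern--Brocot bisection does not prescribe one partial quotient per level: for instance the leftmost gap at level $k$ is $\bigl(0,[0;\overline{k+1}\,]\bigr)$, which contains numbers with arbitrarily large first quotient. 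What the paper actually uses is the minimality of pseudocenter denominators. If the nested gaps $J_m$ containing $\gamma$ have pseudocenters $p_m/q_m$, then either the $J_m$ eventually stabilize to $\{\gamma\}$ (so $\gamma$ is the common endpoint of two adjacent matching intervals, hence of bounded type), or $q_m\to\infty$ and $|J_m|<2/q_m\to 0$; the left endpoints $\alpha_m$ are matching-interval endpoints whose partial quotients are bounded by the leading digit, and since $\alpha_m\to\gamma$ in the strong sense of eventually agreeing with $\gamma$ on any fixed initial block, $\gamma$ inherits the bound. So your high-level plan is sound, but the concrete induction you propose is not what the paper establishes, and the pseudocenter-denominator argument of Lemma~\ref{lemmanbounded} is the piece of machinery your proposal lacks.
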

Since the set of numbers with bounded continued fraction expansion has Lebesgue measure zero, this clearly implies the previous conjecture.

We will then discuss some consequences of these matchings on the shape
of the entropy function, coming from a formula in
\cite{NakadaNatsui}. This formula allows us to recover the behaviour
of entropy in a neighbourhood of points where a matching condition is
present. First of all, we will use it to prove that entropy has
one-sided derivatives at every point belonging to some matching
interval, and also to recover the exact value of $h(T_\alpha)$ for
$\alpha \geq 2/5$. In general, though, to reconstruct the entropy one
also has to know the invariant density at one point.

As an example, we shall examine the entropy on an interval $J$ on which (by previous
experiments, see \cite{LuzziMarmi}, sect. 3) it was thought to be linearly
increasing: we numerically compute the invariant density for a single
value of $\alpha \in J$ and use it to predict the analytical form of the entropy on
$J$, which in fact happens to be not linear. The data produced with
this extrapolation method agree with high precision, and  much better than
any linear fit, with the values of $h(T_\alpha)$ computed via Birkhoff
averages.

The paper is structured as follows: in section 2 we will discuss
numerical simulations of the entropy and provide some theoretical
framework to justify the results; in section 3 we shall analyze the
mechanisms which produce the matching intervals and in section 4 we
will numerically produce them and study their hierarchical structure; in section 5 we will see how these
matching conditions affect the entropy function.

\section*{Acknowledgements}
This research was partially supported by the project ``Dynamical Systems and Applications'' of the Italian Ministry of University and Research\footnote{PRIN 2007B3RBEY.}, and the Centro di Ricerca Matematica  ``Ennio De Giorgi''.

\section{Numerical computation of the entropy}

Let us examine more closely the algorithm  used in $\cite{LuzziMarmi}$ to compute the entropy. A
numerical problem in evaluating Birkhoff averages  arises from the fact that 
 the orbit of a point
can fall very close to the origin: the computer will not distinguish a
very small value from zero. In this case we neglect this point, and
complete the (pseudo)orbit restarting from a  new random
seed\footnote{Another choice is to throw away the whole orbit and restart; it
  seems there is not much difference on the final result}. As a matter of fact this
algorithm produces an approximate value of

$$h_\epsilon(\alpha):=\int_{I_\alpha} f_\epsilon(x) d\mu_\alpha(x)
\ \ \ {\rm with} \ \ \ f_\epsilon(x):=\left\{
\begin{array}{ll}
0 & |x|\leq \epsilon\\
-2\log|x| & |x| > \epsilon
\end{array}
\right.
$$ where $\epsilon= 10^{-16}$; of course $h_\epsilon(\alpha)$ is an
excellent approximation of the entropy $h(\alpha)$, since the
difference is of order $\epsilon \log \epsilon^{-1}$. To calculate $h_{\epsilon}(\alpha)$ 
we use the Birkhoff sums
$$h_\epsilon(\alpha, N, x):=\frac{1}{N} \sum_{j=0}^{N-1}
f_\epsilon(T_{\alpha}^j(x))$$
and in $\cite{Tiozzo}$ the fourth author proves that for large $N$ the random
variable $h(\epsilon, N, \cdot)$ is distributed around its mean
$h_\epsilon(\alpha)$ approximately with  normal law and  standard deviation
$\sigma_\epsilon(\alpha)/\sqrt{N}$ where
$$ \sigma^2_\epsilon(\alpha):= \lim_{n\to +\infty}\int_{I_\alpha}
\left( \frac{S_n f_\epsilon- n \int f_\epsilon d \mu_{\alpha}}{\sqrt{n}}\right)^2 d\mu_\alpha$$
which explains the aforementioned result by Luzzi and Marmi $\cite{LuzziMarmi}$.

One of our goals is to study the function $\alpha \mapsto
\sigma^2_\epsilon(\alpha)$, in particular we ask whether it displays
some regularity like continuity or semicontinuity. To this aim we pushed the same scheme as in $\cite{LuzziMarmi}$ to get higher precision: 

\begin{enumerate}
\item
We take a sample of values $\alpha$ chosen in a particular subinterval  $J\subset [0,1]$;
\item For each value $\alpha$ we choose a random sample $\{x_1, ... , x_M\}$ in $I_\alpha$ (the cardinality M of this sample is usually $10^6$ or $10^7$);
\item For each $x_i\in I_\alpha$ ($i=1,...,M$) we evaluate
$h_\epsilon(\alpha, N, x_i)$ as described before
(the number of iterates N will be $10^4$);
\item
Finally, we evaluate the (approximate) entropy and take record of
standard deviation as well:
$$\hat h_\epsilon(\alpha,N,M):= \frac{1}{M}\sum_{i=1}^M
h_\epsilon(\alpha, N, x_i)$$
$$\hat \sigma_\epsilon(\alpha):=\sqrt{ \frac{1}{M}\sum_{i=1}^M
  [h_\epsilon(\alpha, N, x_i)- \hat h_\epsilon(\alpha,N,M)]^2}.$$
\end{enumerate}

\subsection{Central limit theorem}

Let us restate more precisely the convergence result for Birkhoff sums proved in \cite{Tiozzo}. Let
us denote by $BV(I_{\alpha})$ the space of real-valued,
$\mu_{\alpha}$-integrable, bounded variation functions of the interval
$I_{\alpha}$. We will denote by $S_nf$ the Birkhoff sum
$$S_n f = \sum_{j = 0}^{n-1} f \circ T_{\alpha}^j$$

\begin{lemma}
Let $\alpha \in (0,1]$ and $f$ be an element of $BV(I_{\alpha})$.
Then the sequence 
$$M_n = \int_{I_\alpha}\left(\frac{S_n f- n \int f d\mu_{\alpha}}{\sqrt{n}}\right)^2 d\mu_{\alpha}$$ converges to a real nonnegative value, which will be denoted by $\sigma^2$. Moreover, $\sigma^2 = 0$ if and only if there exists 
$u \in L^2(\mu_{\alpha})$ such that $u\rho_{\alpha} \in BV(I_{\alpha})$ and
\begin{equation} \label{cohom} 
f - \int_{I_\alpha} f d\mu_{\alpha} = u - u \circ T_{\alpha} 
\end{equation}
\end{lemma}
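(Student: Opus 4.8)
The plan is to realize this as an instance of the standard martingale/spectral-gap machinery for the transfer operator of $T_\alpha$. Let $\mathcal{L}_\alpha$ denote the Perron--Frobenius (transfer) operator associated to $T_\alpha$ and the invariant measure $\mu_\alpha$; since $T_\alpha$ has an a.c.i.m. with density $\rho_\alpha$ (quoted from \cite{LuzziMarmi}), and these Japanese continued fraction maps are known to be piecewise-smooth expanding maps with good distortion properties, $\mathcal{L}_\alpha$ acts on $BV(I_\alpha)$ with a spectral gap: $1$ is a simple eigenvalue and the rest of the spectrum lies in a disk of radius $r<1$. First I would set $g = f - \int f\,d\mu_\alpha$, so $\int g\,d\mu_\alpha = 0$, and write, using the adjointness of $\mathcal L_\alpha$ and the invariance of $\mu_\alpha$,
\begin{equation*}
\int_{I_\alpha} (S_n g)^2\, d\mu_\alpha = n\int g^2\,d\mu_\alpha + 2\sum_{k=1}^{n-1}(n-k)\int g\,(g\circ T_\alpha^k)\,d\mu_\alpha.
\end{equation*}
Each correlation $\int g\,(g\circ T_\alpha^k)\,d\mu_\alpha$ equals $\int (\mathcal L_\alpha^k(g\rho_\alpha)/\rho_\alpha)\,g\,d\mu_\alpha$, and the spectral gap gives $|\int g\,(g\circ T_\alpha^k)\,d\mu_\alpha| \le C r^k \|g\rho_\alpha\|_{BV}\,\|g\|_{L^1}$, so the series $\sum_{k\ge1}\int g\,(g\circ T_\alpha^k)\,d\mu_\alpha$ converges absolutely. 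A Cesàro/Abel summation argument then shows
\begin{equation*}
M_n = \frac1n\int (S_n g)^2\,d\mu_\alpha \longrightarrow \sigma^2 := \int g^2\,d\mu_\alpha + 2\sum_{k=1}^{\infty}\int g\,(g\circ T_\alpha^k)\,d\mu_\alpha,
\end{equation*}
and $\sigma^2\ge 0$ because it is a limit of the nonnegative quantities $M_n$.

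For the characterization of $\sigma^2 = 0$, I would use the Gordin--Liverani coboundary criterion. The correct object is the function $v := \sum_{k\ge0}\mathcal L_\alpha^k(g\rho_\alpha)$, which converges in $BV(I_\alpha)$ by the spectral gap (since $\int g\,d\mu_\alpha=0$ puts $g\rho_\alpha$ in the complement of the top eigenspace). Setting $u := v/\rho_\alpha$, one checks the telescoping identity $g\rho_\alpha = v - \mathcal L_\alpha v$, which, after dividing by $\rho_\alpha$ and using $\mathcal L_\alpha v / \rho_\alpha = (u\circ T_\alpha)$-type transfer identities (more precisely $\int \psi\cdot(\mathcal L_\alpha v/\rho_\alpha)\,d\mu_\alpha = \int (\psi\circ T_\alpha)\, u\,d\mu_\alpha$ for all test $\psi$), translates into $g = u - u\circ T_\alpha$ $\mu_\alpha$-a.e. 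One verifies $u\in L^2(\mu_\alpha)$ from $v = u\rho_\alpha\in BV$ and boundedness considerations on $\rho_\alpha$. Conversely, if $g = u - u\circ T_\alpha$ with $u\in L^2(\mu_\alpha)$, then $S_n g = u - u\circ T_\alpha^n$ telescopes, so $\int(S_n g)^2\,d\mu_\alpha \le 4\|u\|_{L^2}^2$ is bounded, forcing $M_n\to 0$; hence $\sigma^2 = 0$. The hypothesis $u\rho_\alpha\in BV$ is exactly what is needed to run the forward direction cleanly, matching the statement.

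The main obstacle is establishing, or quoting cleanly, the spectral gap for $\mathcal L_\alpha$ on $BV(I_\alpha)$ \emph{uniformly enough} to make the transfer-operator estimates legitimate: one must know that $T_\alpha$ is, after finitely many iterates, uniformly expanding with bounded distortion (a Lasota--Yorke inequality), that $\rho_\alpha$ is bounded above and bounded below away from $0$ on $I_\alpha$ (so that division by $\rho_\alpha$ preserves $BV$ and $L^2$ controls), and that the a.c.i.m. is unique and mixing (no peripheral spectrum besides $1$). These facts are known for Nakada's $\alpha$-continued fractions, but assembling the precise references and the correct $BV$-norm estimates is the technical heart; once the spectral gap is in hand, both the convergence of $M_n$ and the coboundary dichotomy follow from the Green--Kubo formula and the Gordin argument in a routine way. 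A secondary, more bookkeeping, difficulty is handling the interface between $BV$ and $L^2(\mu_\alpha)$ — in particular checking that $v/\rho_\alpha\in L^2(\mu_\alpha)$ and that the a.e. identity \eqref{cohom} genuinely holds pointwise and not merely in a distributional sense — but this is controlled by the two-sided bounds on $\rho_\alpha$.
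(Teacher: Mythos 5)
Note first that the paper itself offers no proof of this lemma --- it is stated as a result quoted from \cite{Tiozzo} --- so there is no in-paper argument to compare against. Your overall strategy (spectral gap for the transfer operator of $T_\alpha$ on $BV$, the Green--Kubo series for $\sigma^2$, and a Gordin-type coboundary criterion) is the standard one and is presumably what \cite{Tiozzo} does in outline. Both the convergence of $M_n$ via absolute summability of correlations and the converse implication (telescoping $S_n g = u - u\circ T_\alpha^n$, hence $M_n \to 0$) are sound as sketched.

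The forward direction of the ``if and only if'' has a genuine gap. Write $\hat{\mathcal L}_\alpha h := \mathcal L_\alpha(h\rho_\alpha)/\rho_\alpha$ for the transfer operator normalized with respect to $\mu_\alpha$. Your $v = \sum_{k\ge 0}\mathcal L_\alpha^k(g\rho_\alpha)$ and $u = v/\rho_\alpha$ satisfy $g = u - \hat{\mathcal L}_\alpha u$, \emph{not} $g = u - u\circ T_\alpha$. For a non-invertible $T_\alpha$ the Koopman operator $U\psi = \psi\circ T_\alpha$ and $\hat{\mathcal L}_\alpha$ are mutually adjoint but not inverse: $\hat{\mathcal L}_\alpha U = \mathrm{Id}$, while $U\hat{\mathcal L}_\alpha$ is a conditional expectation onto $T_\alpha^{-1}\mathcal B$. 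The duality relation you invoke, $\int \psi\,(\hat{\mathcal L}_\alpha u)\,d\mu_\alpha = \int (\psi\circ T_\alpha)\,u\,d\mu_\alpha$, does not yield $\hat{\mathcal L}_\alpha u = u\circ T_\alpha$ a.e.\ (note the asymmetry between $\psi$ and $u$ on the right-hand side). A telltale sign that something is missing is that $v$ and $u$ are defined for \emph{every} zero-mean $g\in BV$, whereas not every such $g$ is a coboundary; somewhere the hypothesis $\sigma^2 = 0$ must actually be used, and in your sketch it never is. The missing step is the martingale--coboundary decomposition: set $\chi := \sum_{k\ge 1}\hat{\mathcal L}_\alpha^k g$ (convergent in $BV$ by the spectral gap) and $m := g + \chi - \chi\circ T_\alpha$. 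One checks $\hat{\mathcal L}_\alpha m = 0$, so the functions $m\circ T_\alpha^j$ are pairwise orthogonal in $L^2(\mu_\alpha)$; writing $S_n g = S_n m + (\chi\circ T_\alpha^n - \chi)$ then gives $\sigma^2 = \int m^2\,d\mu_\alpha$. Only at this point does $\sigma^2 = 0$ enter: it forces $m = 0$ a.e., whence $g = \chi\circ T_\alpha - \chi$, i.e.\ the coboundary equation \eqref{cohom} with $u = -\chi$ (and $u\rho_\alpha = -\chi\rho_\alpha = -\sum_{k\ge1}\mathcal L_\alpha^k(g\rho_\alpha)\in BV$, $u\in L^2$ by the two-sided bounds on $\rho_\alpha$). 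Your sketch elides exactly this step, replacing it with a transfer identity that does not give the pointwise coboundary equation.
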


The condition given by $(\ref{cohom})$ is the same as in the proof of the central limit theorem for Gauss' map, and it's known as $\emph{cohomological equation}$. The main statement of the theorem is the following:

\begin{theorem}
Let $\alpha \in (0,1]$ and $f$ be an element of $BV(I_{\alpha})$ such that $(\ref{cohom})$ has no solutions. Then, for every $v \in \mathbb{R}$ we have 
$$ \lim_{n \rightarrow \infty} \mu_{\alpha}\left(\frac{S_n f - n \int_I f d\mu_{\alpha}}{\sigma \sqrt{n}} \leq v \right) = \frac{1}{\sqrt{2 \pi}}\int_{-\infty}^v e^{-x^2/2}dx$$
\end{theorem}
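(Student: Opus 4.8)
The plan is to deduce this central limit theorem from the spectral properties of the transfer (Perron--Frobenius) operator $\mathcal{L}_\alpha$ associated to $T_\alpha$ acting on a suitable Banach space, following the Nagaev--Guivarc'h perturbative method. First I would recall that $T_\alpha$ is, for every $\alpha\in(0,1]$, a piecewise-monotone expanding map of the interval with good distortion properties, so that $\mathcal{L}_\alpha$ has a spectral gap on $BV(I_\alpha)$: the eigenvalue $1$ is simple (with eigenfunction $\rho_\alpha$), it is the only eigenvalue on the unit circle, and the rest of the spectrum lies in a disk of radius $r<1$. This is the standard Lasota--Yorke / Ionescu-Tulcea--Marinescu setup; for $\alpha$-continued fractions it can be taken from \cite{LuzziMarmi} together with the existence and uniqueness of the a.c.i.m.\ already quoted in the introduction, or reproved via a Lasota--Yorke inequality for $\mathcal{L}_\alpha$ on $BV$.

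Next I would introduce, for $f\in BV(I_\alpha)$ with $\int f\,d\mu_\alpha=0$, the family of twisted operators
$$\mathcal{L}_t(g) = \mathcal{L}_\alpha\big(e^{itf}g\big),\qquad t\in\mathbb{R},$$
and observe that $t\mapsto\mathcal{L}_t$ is analytic (indeed entire) as a map into the bounded operators on $BV(I_\alpha)$, because multiplication by $e^{itf}$ is analytic in $t$ on $BV$ when $f\in BV$ is bounded — and here $f$, being $BV$ on a bounded interval, is bounded. By analytic perturbation theory, for $t$ near $0$ the operator $\mathcal{L}_t$ has a simple leading eigenvalue $\lambda(t)$ depending analytically on $t$, with $\lambda(0)=1$, together with spectral projector $\Pi_t$ and a uniformly contracting remainder, so that $\mathcal{L}_t^n = \lambda(t)^n\Pi_t + N_t^n$ with $\|N_t^n\|\le C\theta^n$ for some $\theta<1$. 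The key identity is
$$\int_{I_\alpha} e^{itS_nf}\,d\mu_\alpha = \int_{I_\alpha}\mathcal{L}_t^n(\rho_\alpha)\,dx = \lambda(t)^n\int \Pi_t(\rho_\alpha)\,dx + O(\theta^n),$$
which reduces the asymptotics of the characteristic function of $S_nf$ to the Taylor expansion of $\lambda$ at $0$.

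I would then compute $\lambda'(0)$ and $\lambda''(0)$ by differentiating the eigenvalue relation $\mathcal{L}_t v_t = \lambda(t)v_t$ and integrating against Lebesgue measure. Differentiating once at $t=0$ gives $\lambda'(0)=i\int f\,d\mu_\alpha=0$ by our normalization; differentiating twice and simplifying gives $\lambda''(0) = -\sigma^2$, where $\sigma^2$ is exactly the limit variance in the Lemma — here one uses the Green--Kubo type formula $\sigma^2 = \int f^2\,d\mu_\alpha + 2\sum_{k\ge1}\int f\,(f\circ T_\alpha^k)\,d\mu_\alpha$, the convergence of which follows from the spectral gap, and one checks this coincides with $\lim_n M_n$ (this bookkeeping is where the Lemma is invoked). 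Hypothesis that the cohomological equation \eqref{cohom} has no solution guarantees, by the Lemma, that $\sigma^2>0$. Then $\lambda(t) = e^{-\sigma^2 t^2/2 + o(t^2)}$, so substituting $t = v/(\sigma\sqrt n)$ yields $\int e^{i(v/\sigma\sqrt n)S_nf}\,d\mu_\alpha \to e^{-v^2/2}$, and L\'evy's continuity theorem gives convergence in distribution of $S_nf/(\sigma\sqrt n)$ to a standard normal, which is the assertion of the Theorem.

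The main obstacle, and the step deserving the most care, is establishing the quasi-compactness of $\mathcal{L}_\alpha$ on $BV(I_\alpha)$ uniformly enough to run analytic perturbation theory: one must verify a Lasota--Yorke inequality $\|\mathcal{L}_\alpha^n g\|_{BV}\le C r^n\|g\|_{BV} + C\|g\|_{L^1}$ with honest constants, despite the fact that for small $\alpha$ the map $T_\alpha$ has infinitely many branches accumulating at the endpoints and the derivative $|T_\alpha'|=1/x^2$ is unbounded. This is handled by the usual trick for Gauss-like maps — grouping branches, using the bounded distortion of the iterated system, and exploiting that $\mu_\alpha$ has density bounded away from $0$ and $\infty$ (from \cite{LuzziMarmi}) — but it is the technical heart of the argument; once the spectral gap is in place the rest is the routine Nagaev--Guivarc'h machine. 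The translation between the ``$\mathcal{L}_t$-eigenvalue'' variance and the ``$M_n$-limit'' variance of the Lemma is the only other place one must be careful to keep the two definitions consistent.
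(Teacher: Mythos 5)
Your proposal is essentially the standard Nagaev--Guivarc'h argument, and it is correct in outline. The paper itself does not prove this theorem: it is stated as a restatement of a result from the companion paper \cite{Tiozzo}, and the adjacent Berry--Ess\'een bound is attributed to \cite{Broise}, Theorem 8.1 — which is precisely the spectral-perturbation framework you describe (quasi-compactness of the transfer operator on $BV$, twisted operators $\mathcal{L}_t = \mathcal{L}_\alpha(e^{itf}\,\cdot\,)$, analytic perturbation of the leading eigenvalue, L\'evy continuity). So you have correctly reconstructed the approach the authors are relying on.

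Two small remarks. First, you correctly flag the Lasota--Yorke inequality for small $\alpha$ (infinitely many branches, unbounded derivative near $0$) as the genuine technical heart; the paper does not address this, and it is indeed where the real work lies. Second, you should be careful about the claimed equivalence between $\lambda''(0)=-\sigma^2$ and the $M_n$-limit definition of $\sigma^2$ in the Lemma, as well as the assertion that the cohomological obstruction forces $\sigma^2>0$: the Lemma in the paper states that $\sigma^2=0$ iff \eqref{cohom} has a solution with $u\in L^2(\mu_\alpha)$ and the regularity condition $u\rho_\alpha\in BV(I_\alpha)$, which is a priori weaker than the nondegeneracy one gets directly from the Green--Kubo identity plus spectral gap. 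Closing that gap (showing that $\sigma^2=0$ actually produces such a regular $u$, typically $u=\sum_{k\ge0}\mathcal{L}_\alpha^k(f\rho_\alpha)/\rho_\alpha$, and conversely) is a routine but necessary piece of bookkeeping you should not elide.
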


Since we know that the invariant density $\rho_{\alpha}$ is bounded from below by a nonzero constant, we can show that 

\begin{proposition}
For every real-valued nonconstant $f \in BV(I_{\alpha})$, the equation $(\ref{cohom})$ has no solutions. Hence, the central limit theorem holds.
\end{proposition}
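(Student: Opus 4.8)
The plan is to argue by contradiction: suppose $f \in BV(I_\alpha)$ is real-valued, nonconstant, and that the cohomological equation $(\ref{cohom})$ has a solution $u \in L^2(\mu_\alpha)$ with $u\rho_\alpha \in BV(I_\alpha)$. Writing $g = f - \int f \, d\mu_\alpha$, we have $g = u - u\circ T_\alpha$. The idea is to exploit the fact that $T_\alpha$ has an indifferent-type behaviour near the origin — more precisely, points arbitrarily close to $0$ are mapped by $T_\alpha$ onto a full branch covering essentially all of $I_\alpha$ — so that iterating the cohomological equation along a suitable orbit forces $u$, and then $f$, to be constant.

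The key steps, in order, would be: (1) Iterate the cohomological equation to get the telescoped identity $S_n g = u - u \circ T_\alpha^n$ for all $n$. (2) Use the lower bound $\rho_\alpha \geq c > 0$ on $I_\alpha$ (quoted just before the statement): combined with $u\rho_\alpha \in BV(I_\alpha) \subset L^\infty$, this gives that $u$ itself is bounded, $\|u\|_\infty \leq C$. Hence $|S_n g| \leq 2C$ uniformly in $n$, i.e.\ the Birkhoff sums of $g$ are uniformly bounded. (3) Now invoke ergodicity of $(T_\alpha, \mu_\alpha)$ together with the Birkhoff ergodic theorem: since $g$ has zero mean, $\frac{1}{n} S_n g \to 0$ a.e., but more is needed — I want to deduce $g \equiv 0$ from uniform boundedness of $S_n g$. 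The cleanest route is a coboundary-rigidity argument: if $S_n g$ is uniformly bounded in $L^\infty$ (or even just in $L^2$ uniformly in $n$ combined with boundedness), one shows $g$ is an $L^2$-coboundary with bounded transfer function, and then the structure of $T_\alpha$ near $0$ (an infinite-measure-like cusp, realized here as a sequence of full branches with shrinking domains) is used to propagate the value of $u$: evaluating $u - u\circ T_\alpha = g$ on the cylinder sets $\{x : a_1 = n\}$ and letting $n \to \infty$ forces the jump of $u$ across these shrinking intervals to vanish in a way incompatible with $g$ being nonconstant unless $g \equiv 0$.

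Alternatively — and this is probably the slicker argument the authors intend — one uses the transfer (Perron--Frobenius) operator $L_{T_\alpha}$ with respect to $\mu_\alpha$: the cohomological equation $g = u - u\circ T_\alpha$ implies, applying $L_{T_\alpha}$ and using $L_{T_\alpha}(u \circ T_\alpha) = u$, that $L_{T_\alpha} g = L_{T_\alpha} u - u$; one then iterates and uses the spectral gap of $L_{T_\alpha}$ on $BV$ (which holds because $T_\alpha$ is a piecewise-expanding, mixing interval map with the appropriate distortion bounds) to conclude $u$ is determined and, testing against the known explicit structure of $\rho_\alpha$, that $u\rho_\alpha \in BV$ is impossible unless $f$ is constant. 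The genuinely delicate point in either approach is \textbf{step (3)}: translating ``uniformly bounded Birkhoff sums of a $BV$, zero-mean observable'' into ``the observable is identically zero'' for this particular family $T_\alpha$. This is where the specific geometry of $T_\alpha$ at the origin — infinitely many full branches accumulating at $0$, where $|x|$ and hence $-\log|x|$ is unbounded — must be used; a generic expanding map need not have this rigidity, so the proof cannot be purely soft. I would expect the argument to reduce, after the boundedness of $u$ is established in step (2), to showing that a bounded solution $u$ of the cohomological equation must be continuous (or have controlled jumps) precisely on the branch partition, and then that the only such $u$ making $u\rho_\alpha$ of bounded variation corresponds to the trivial case $\sigma^2 > 0$, i.e.\ no nontrivial solution exists.
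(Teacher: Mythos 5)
You correctly identify the two ingredients — the lower bound $\rho_\alpha \geq c > 0$ and the accumulation of infinitely many branches of $T_\alpha$ at the origin — but you don't assemble them into a complete argument, and you say so yourself: your ``step (3)'' is a genuine gap, and the detour through telescoped Birkhoff sums in steps (1)--(2) does not lead to it. The fix is simpler and does not use iteration at all.

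The point you miss in step (2) is that you can do better than ``$u$ bounded''. Since $\rho_\alpha$ is itself of bounded variation and bounded away from $0$ and $\infty$, the function $1/\rho_\alpha$ is also in $BV(I_\alpha)$ (compose with the Lipschitz map $t\mapsto 1/t$ on $[c, \|\rho_\alpha\|_\infty]$), and a product of two bounded $BV$ functions is $BV$; hence $u = (u\rho_\alpha)\cdot(1/\rho_\alpha) \in BV(I_\alpha)$, not merely $L^\infty$. With $u\in BV$ and $g := f - \int f\,d\mu_\alpha \in BV$, the single-step identity $u\circ T_\alpha = u - g$ already gives $u\circ T_\alpha \in BV(I_\alpha)$. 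Now the geometry at the origin enters in the clean form you were looking for: $T_\alpha$ has infinitely many full monotone branches accumulating at $0$, i.e.\ infinitely many disjoint cylinders $C_n$ on each of which $T_\alpha$ is a monotone bijection onto $I_\alpha$. Variation is invariant under monotone reparametrization, so $\operatorname{Var}_{C_n}(u\circ T_\alpha) = \operatorname{Var}_{I_\alpha}(u)$ for every such $n$, and therefore
\[
\operatorname{Var}_{I_\alpha}(u\circ T_\alpha) \;\geq\; \sum_{n} \operatorname{Var}_{C_n}(u\circ T_\alpha) \;=\; \sum_{n} \operatorname{Var}_{I_\alpha}(u).
\]
Since the left side is finite and the sum has infinitely many equal terms, $\operatorname{Var}_{I_\alpha}(u)=0$, so $u$ is $\mu_\alpha$-a.e.\ constant, hence $g = u - u\circ T_\alpha \equiv 0$ and $f$ is a.e.\ constant, the desired contradiction. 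Your first approach gestures at this mechanism (``jumps of $u$ across shrinking intervals'') but frames it in terms of boundedness of $S_n g$ and ``coboundary rigidity'', neither of which is needed; your second approach (spectral gap of the transfer operator on $BV$) establishes existence and uniqueness of a candidate $u$ but does not by itself exclude solutions, so it also does not close the gap.
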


Now, for every $\epsilon > 0$ the function $f_{\epsilon}$ define in the previous section is of bounded variation, hence the central limit theorem holds and the distribution of the approximate entropy $h_{\epsilon}(\alpha, N, \cdot)$ approaches a Gaussian when $N \rightarrow \infty$. 
As a corollary, for the standard deviation of Birkhoff averages 

$$\textup{Std}\left[ \frac{S_n f_{\epsilon}}{n} \right] = \mathbb{E}\left[ \left( \frac{S_n f_{\epsilon}}{n} - \int_{I_{\alpha}}f_{\epsilon}d\mu_{\alpha} \right)^2 \right]^{1/2} = \frac{\sigma}{\sqrt{n}} + o\left( \frac{1}{\sqrt{n}} \right)$$  

\subsection{Speed of convergence}

In terms of numerical simulations it is of primary importance to estimate the difference between the sum computed at the $n^{th}$ step and the asymptotic value: a semi-explicit bound is given by the following

\begin{theorem}
For every nonconstant real-valued $f \in BV(I_\alpha)$, there exists $C > 0$ such that
$$ \sup_{v \in \mathbb{R}}\left[ \mu_{\alpha} \left( \frac{S_nf - n \int_{I_\alpha} f d\mu_{\alpha}}{\sigma \sqrt{n}} \leq v \right) - \frac{1}{\sqrt{2 \pi}} \int_{-\infty}^v e^{-\frac{x^2}{2}} dx \right] \leq \frac{C}{\sqrt{n}} $$
\end{theorem}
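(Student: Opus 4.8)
The plan is to deduce this rate of convergence from the spectral theory of the transfer operator $\mathcal{L}_\alpha$ associated to $T_\alpha$ acting on $BV(I_\alpha)$, exactly as in the classical Nagaev–Guivarc'h approach. First I would recall that, since $T_\alpha$ is a piecewise-monotone expanding map with a density $\rho_\alpha$ bounded away from zero, the transfer operator $\mathcal{L}_\alpha$ has a spectral gap on $BV(I_\alpha)$: the eigenvalue $1$ is simple, the rest of the spectrum is contained in a disk of radius $r<1$, and $1$ is the only eigenvalue on the unit circle. This is the analytic input that also underlies the central limit theorem quoted above; I would cite the same source (\cite{Tiozzo}) for it.

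The key step is to introduce, for $t$ in a neighbourhood of $0$, the twisted (Nagaev) operators $\mathcal{L}_{\alpha,t}(g) := \mathcal{L}_\alpha(e^{itf}g)$, which depend analytically on $t$ as operators on $BV(I_\alpha)$. By analytic perturbation theory, for small $|t|$ the operator $\mathcal{L}_{\alpha,t}$ has a leading simple eigenvalue $\lambda(t)$ with $\lambda(0)=1$, depending analytically on $t$, and the characteristic function of the Birkhoff sum decomposes as
$$\mathbb{E}_{\mu_\alpha}\!\left[e^{it(S_nf - n\int f\,d\mu_\alpha)}\right] = \lambda(t)^n e^{-itn\int f\,d\mu_\alpha}\,\bigl(\Pi_t\text{-terms}\bigr) + O(r^n),$$
where the main term comes from the leading eigenprojection and the $O(r^n)$ remainder is uniform for $t$ in a fixed small neighbourhood of $0$. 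A Taylor expansion gives $\log\lambda(t) = it\int f\,d\mu_\alpha - \tfrac{\sigma^2}{2}t^2 + O(t^3)$, where $\sigma^2$ is the variance from the Lemma; crucially $\sigma^2>0$ here because the Proposition guarantees the cohomological equation has no solution for nonconstant $f$. Rescaling $t\mapsto t/(\sigma\sqrt{n})$ then shows the characteristic function of $(S_nf - n\int f\,d\mu_\alpha)/(\sigma\sqrt n)$ equals $e^{-t^2/2}$ up to an error of size $O(|t|^3/\sqrt n)$ on the range $|t|\le \delta\sigma\sqrt n$, plus the exponentially small term. Feeding this into the Berry–Esseen smoothing inequality (e.g.\ in the form $\sup_v|F_n(v)-\Phi(v)| \le \frac{1}{\pi}\int_{-A}^{A}\bigl|\frac{\widehat F_n(t)-\widehat\Phi(t)}{t}\bigr|dt + \frac{c}{A}$ with $A\asymp\sqrt n$) and splitting the integral according to whether $|t|$ is below or above the small fixed threshold yields the claimed bound $C/\sqrt n$.

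The main obstacle is controlling the characteristic function in the intermediate and large-$t$ regime, $|t|$ between a small constant and $\delta\sigma\sqrt n$: the clean eigenvalue expansion is only valid for $|t|$ smaller than a fixed constant, and for larger $t$ one needs the genuinely aperiodic estimate $\sup_{\delta\le|t|\le\pi}\|\mathcal{L}_{\alpha,t}^n\|_{BV}\le C\theta^n$ for some $\theta<1$. This non-degeneracy (the absence of eigenvalues of modulus $1$ for the twisted operator when $t\ne0$) is precisely the statement that $f$ is not cohomologous to a function with values in a lattice plus a constant; for an unbounded function like $-2\log|x|$ this holds, but it must be argued — again this is exactly the content that makes the CLT (rather than a lattice limit law) hold in \cite{Tiozzo}, so I would reduce it to a cited statement there. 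Once that uniform contraction is in hand, the contribution of the $|t|\ge\delta$ part of the smoothing integral is $O(\sqrt n\,\theta^n) = o(1/\sqrt n)$, and the proof closes. The rest is the standard Nagaev–Guivarc'h bookkeeping, which I would not reproduce in detail.
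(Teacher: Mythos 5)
Your proposal takes the standard Nagaev--Guivarc'h route: spectral gap of the transfer operator on $BV(I_\alpha)$, analytic perturbation of the leading eigenvalue of the twisted operators $\mathcal{L}_{\alpha,t}$, Taylor expansion of $\log\lambda(t)$, and the Esseen smoothing inequality. This is precisely the method behind the reference the paper invokes --- the paper's entire ``proof'' is a one-line citation to \cite{Broise}, Thm.~8.1, and that theorem is established by exactly this spectral-perturbation argument --- so in substance you and the paper agree; you have simply unpacked the citation into a sketch.

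One correction worth making: the ``main obstacle'' paragraph overstates what Berry--Esseen actually requires. In Esseen's smoothing inequality the cut-off $A$ is free, and taking $A=\delta\sigma\sqrt n$ already makes the $c/A$ remainder of order $1/\sqrt n$; for a characteristic-function variable $u\in[-A,A]$ the twisted-operator parameter $u/(\sigma\sqrt n)$ then never leaves the small disc $|t|\le\delta$ where the perturbation expansion is valid, the difference $\widehat F_n(u)-e^{-u^2/2}$ carries an explicit $1/\sqrt n$ factor together with Gaussian decay in $u$, and the Esseen integral is therefore $O(1/\sqrt n)$ with no further input. So the uniform aperiodicity estimate $\sup_{\delta\le|t|\le\pi}\|\mathcal{L}_{\alpha,t}^n\|_{BV}\le C\theta^n$, which you describe as the crux, is not needed for the Berry--Esseen rate (it would be needed for a \emph{local} CLT); all that is required, besides the spectral gap and analyticity in $t$, is $\sigma^2>0$, which the paper's preceding Proposition supplies for every nonconstant $f\in BV(I_\alpha)$. (Relatedly, the aside about $-2\log|x|$ being unbounded is off-target: the theorem concerns $BV$ observables, which are bounded, and the paper applies it to the truncation $f_\epsilon$, not to $-2\log|x|$ itself.)
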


\begin{proof}
It follows from a Berry-Ess\'een type of inequality. For details see (\cite{Broise}, th.8.1).                  
\end{proof}

\subsection{Dependence of standard deviation on $\alpha$}

Given these convergence results for the entropy, it is natural to ask how the standard deviation  varies with $\alpha$. In this case not a single exact value of $\sigma_{\epsilon}(\alpha)$ is known; by using the fact that natural extensions of $T_{\alpha}$ are conjugate (\cite{Kraaikamp}, \cite{NakadaNatsui}), it is straightforward to prove the 
\begin{lemma}
 The map $\alpha\mapsto \sigma(\alpha)$ is constant for $\alpha \in [\sqrt{2}-1, \frac{\sqrt{5}-1}{2}]$.
\end{lemma}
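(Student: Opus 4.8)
The plan is to reduce the constancy of $\sigma(\alpha)$ on the interval $[\sqrt{2}-1,\frac{\sqrt5-1}{2}]$ to the known fact that on this range the natural extensions of the maps $T_\alpha$ are all metrically isomorphic. Recall (Nakada, Kraaikamp--Schmidt--Steiner, and the analysis in \cite{NakadaNatsui}) that for $\alpha$ in this window one can build an explicit measure-preserving conjugacy $\Phi_\alpha$ between the natural extension $(\overline{T_\alpha},\overline{\mu_\alpha})$ and a fixed model, so in particular all these systems are isomorphic to one another; moreover the isomorphism on the first coordinate respects the generating partition into the cylinders where $T_\alpha$ acts by a fixed Möbius branch, so the function $x\mapsto -2\log|x|$ is carried to a function that does \emph{not} depend on $\alpha$ (modulo the identification). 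The key observation is that $\sigma^2_\epsilon(\alpha)$, as defined by the limit of the $M_n$ in the first lemma of this subsection, is an isomorphism invariant of the triple (system, invariant measure, observable), because the Birkhoff sums $S_nf_\epsilon$ and the integral $\int f_\epsilon\,d\mu_\alpha$ are all transported intact under a measure-preserving conjugacy.

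Concretely, the steps I would carry out are: (1) state the conjugacy result precisely in the form I need, namely that there is a measurable isomorphism $\pi_\alpha\colon (I_\alpha,\mu_\alpha)\to (I_{\alpha_0},\mu_{\alpha_0})$ with $\pi_\alpha\circ T_\alpha = T_{\alpha_0}\circ\pi_\alpha$ a.e., where $\alpha_0$ is a fixed reference value (say $\alpha_0=\tfrac12$) --- this follows from projecting the natural-extension conjugacy of \cite{Kraaikamp},\cite{NakadaNatsui} to the first coordinate; (2) check that under $\pi_\alpha$ the observable $f_\epsilon$ (or, for the entropy itself, $f = -2\log|x|$) is mapped to the \emph{same} function, using that the conjugacy identifies the branches $1/|x|-c$ of $T_\alpha$ across the parameter range, so $|x|$ on a cylinder is determined by combinatorial data alone; (3) conclude $S_n f_\epsilon(x) = S_n f_\epsilon(\pi_\alpha x)$ $\mu_\alpha$-a.e.\ and $\int f_\epsilon\,d\mu_\alpha = \int f_\epsilon\,d\mu_{\alpha_0}$, hence each $M_n$ is the same for $\alpha$ and $\alpha_0$, and therefore so is the limit $\sigma^2_\epsilon(\alpha)=\sigma^2_\epsilon(\alpha_0)$; (4) remark that letting $\epsilon\to 0$ (or directly applying (2)--(3) to $f=-2\log|x|\in L^2$) gives the same statement for $\sigma(\alpha)$ attached to the entropy observable, which is what the lemma asserts.

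The main obstacle is step (2): one must be careful that the conjugacy of natural extensions genuinely descends to a conjugacy of the \emph{one-sided} systems that intertwines the observable correctly, rather than merely giving equality of entropies. The cleanest way around this is to avoid passing to the one-sided factor at all: work directly on the natural extensions, note that $\sigma^2$ can equally be computed there (the Birkhoff variance of $f_\epsilon\circ(\text{first-coordinate projection})$ under $\overline{\mu_\alpha}$ equals the one computed downstairs, since projection is a factor map and the observable is a pullback), and then invoke that the natural-extension conjugacy $\Phi_\alpha$ is an honest measure-preserving isomorphism carrying the relevant observable to a parameter-independent one. This is exactly the mechanism already used in \cite{NakadaNatsui} to show $h(T_\alpha)$ is constant on $[\sqrt2-1,\tfrac{\sqrt5-1}{2}]$, and the only new input needed is the trivial remark that the whole distribution of normalized Birkhoff sums --- not just its first moment --- is preserved by an isomorphism, whence so is its asymptotic variance $\sigma^2$.
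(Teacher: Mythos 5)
Your outline locates the right lemma (isomorphism of natural extensions), but step~(2) contains a genuine gap that makes the argument as written incorrect, and it is precisely the gap that the paper's proof is organised around.

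You claim that the conjugacy $\Phi$ carries the observable $g_1 = \log\bigl|\partial \tilde T_1^x/\partial x\bigr|$ (equivalently, $-2\log|x|$) to the corresponding parameter-independent observable $g_2$ for the model system, so that $S_N g_1 = (S_N g_2)\circ\Phi$ pointwise and the whole distribution of normalized Birkhoff sums is transported intact. This is not true. The conjugacy $\Phi$ is a measurable isomorphism, not a cylinder-by-cylinder identity, and in particular $|x|\neq|\Phi^x(x,y)|$ in general (the intervals $I_\alpha=[\alpha-1,\alpha]$ even have different endpoints as $\alpha$ varies). Differentiating the conjugacy relation $\Phi\circ\tilde T_1=\tilde T_2\circ\Phi$ in the $x$-direction gives
$$\frac{\partial \Phi^x}{\partial x}\Big|_{\tilde T_1(x,y)}\;\frac{\partial \tilde T_1^x}{\partial x}\Big|_{(x,y)}=\frac{\partial \tilde T_2^x}{\partial x}\Big|_{\Phi(x,y)}\;\frac{\partial \Phi^x}{\partial x}\Big|_{(x,y)},$$
so taking logarithms,
$$g_1 = g_2\circ\Phi + \psi - \psi\circ\tilde T_1,\qquad \psi:=\log\Bigl|\tfrac{\partial\Phi^x}{\partial x}\Bigr|.$$
That is, $g_1$ and $g_2\circ\Phi$ agree only \emph{up to an $L^2$ coboundary}, not as functions. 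Your inference that ``the whole distribution of normalized Birkhoff sums is preserved by an isomorphism'' therefore does not apply directly, because the observable you want to compare on the two sides is not the pullback of the other; it is the pullback plus a coboundary. Absent a further argument, this is exactly why the asymptotic variance (unlike the Kolmogorov--Sinai entropy) is \emph{not} an isomorphism invariant of the abstract measure-preserving system.

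What closes the gap in the paper is an additional lemma: if $u = v + (f - f\circ T)$ with $f\in L^2$, then $\lim_N\int (S_N u/\sqrt N)^2\,d\mu = \lim_N\int (S_N v/\sqrt N)^2\,d\mu$. Applying this to $u=g_1$ and $v=g_2\circ\Phi$ gives $\sigma^2(g_1,\tilde\mu_1)=\sigma^2(g_2\circ\Phi,\tilde\mu_1)$, and only \emph{then} does the measure-preserving property of $\Phi$ yield $\sigma^2(g_2\circ\Phi,\tilde\mu_1)=\sigma^2(g_2,\tilde\mu_2)$. You also need, and implicitly omit, the analytic input that $\Phi$ is a.e.\ differentiable with a diagonal differential and that $\tilde T_j$ have triangular differentials, which is what makes the scalar chain-rule identity above legitimate and guarantees $\psi\in L^2$. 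So the structure you want --- reduce on the natural extension, exploit the isomorphism --- is right, but the missing idea is the coboundary comparison and its invariance lemma; without it, step~(2) is simply false.
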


\begin{proof} See appendix.
\end{proof}
 
The numerical study of this quantity is pretty interesting. 
We first considered the window $J=[0.295, 0.304299]$, where the
entropy is non-monotone. On this interval the standard
deviation shows quite a strange behaviour: the values we have
recorded do not form a cloud clustering around a continuous line (like
for the entropy) but they cluster all above it.

\begin{figure}[h]
\centering
\includegraphics[scale=0.4]{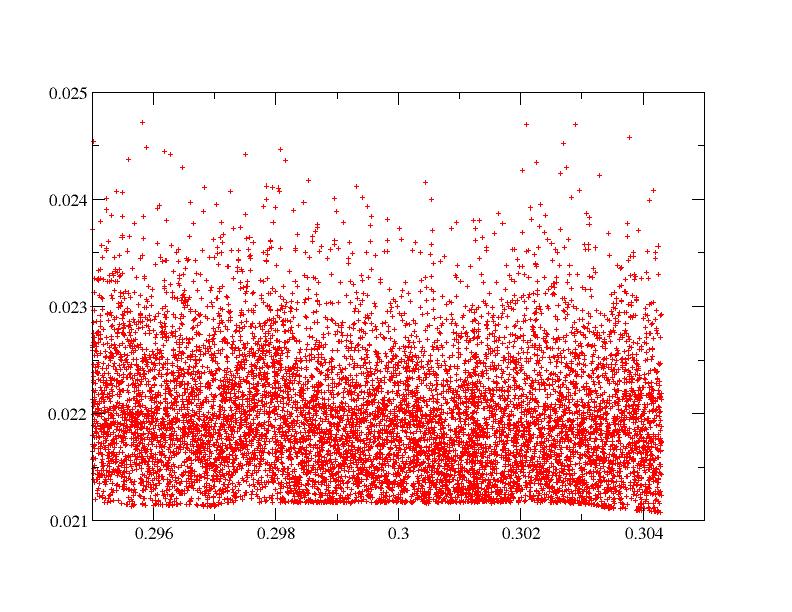}
\caption{Variance on the interval $J=[0.295, 0.304299]$. }
\end{figure}

One might guess that this is due to the fact that the map $\alpha\mapsto \sigma(\alpha)$ is only
semicontinuous, but the same kind of asymmetry takes place also on the
interval $J=[0.616, 0.618]$, where $\sigma^2$ is constant. Indeed, we can observe the same phenomenon also evaluating $\hat
\sigma_\epsilon(\alpha)$ for a fixed value $\alpha$ but taking several
different sample sets.
\begin{figure}[h]
\centering
\includegraphics[scale=0.4]{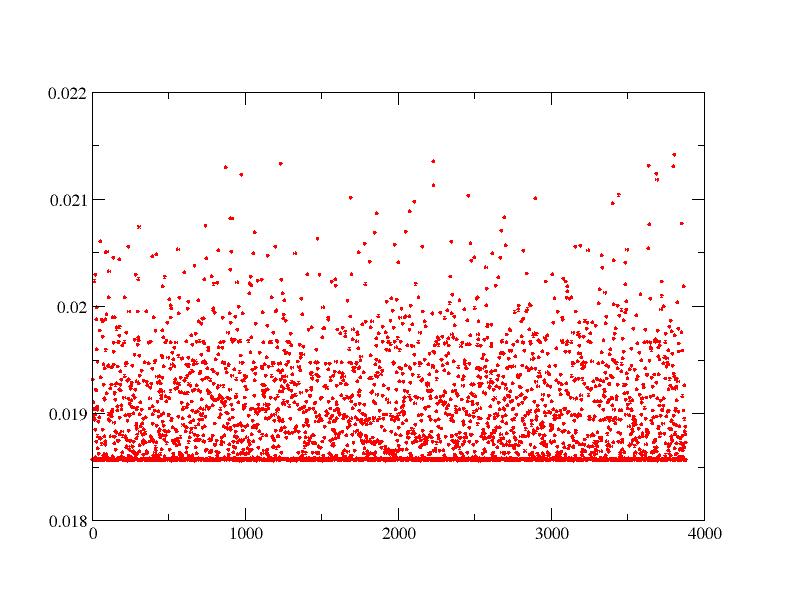}
\caption{Plot of the standard deviation of the different runs on the Gauss map}
\end{figure}

On the other hand this strange behaviour cannot be detected for other maps, 
like the logistic map, and could yet not be explained. Nevertheless, we point out that if you only consider $C^1$ observables, the standard deviation of Birkhoff sums can be proved continuous, at least for $\alpha \in (0.056, 2/3)$. See \cite{Tiozzo}.

\section{Matching conditions}\label{matchingconditions}

In \cite{NakadaNatsui}, Nakada and Natsui found a condition on the orbits of $\alpha$ and $\alpha-1$ which allows one to predict more precisely the behaviour of the entropy. Let us denote for any $\alpha \in [0,1]$, $x \in I_{\alpha}$, $n \geq 1$ by $M_{\alpha, x, n}$ the matrix such that $T_{\alpha}^n(x) = M^{-1}_{\alpha, x, n}(x)$, i.e.
$$M_{\alpha, x, n} = \matr{0}{\epsilon_{\alpha, 1}}{1}{c_{\alpha, 1}}\matr{0}{\epsilon_{\alpha, 2}}{1}{c_{\alpha, 2}}\dots \matr{0}{\epsilon_{\alpha, n}}{1}{c_{\alpha, n}}$$
They proved the following

\begin{theorem} \textup{(\cite{NakadaNatsui}, thm. 2)} \label{matchNakada}
Let us suppose that there exist positive integers $k_1$ and $k_2$ such that 
\begin{itemize}
 \item[\textup{(I)}] $\{T^n_{\alpha}(\alpha) : 0 \leq n < k_1 \} \cap \{T^m_{\alpha}(\alpha -1) : 0 \leq m < k_2 \} = \emptyset$
\item[\textup{(II)}] $M_{\alpha, \alpha, k_1} = \matr{1}{1}{0}{1} M_{\alpha, \alpha -1, k_2}$ \quad$[\, \Longrightarrow \;T^{k_1}_{\alpha}(\alpha) = T^{k_2}_{\alpha}(\alpha-1)\,]$
\item[\textup{(III)}] $T^{k_1}_{\alpha}(\alpha)\;\,\left[\,=\; T^{k_2}_{\alpha}(\alpha -1)\,\right]\; \notin \;\{\alpha, \alpha -1\}$
\end{itemize}
Then there exists $\eta > 0$ such that, on $(\alpha - \eta, \alpha + \eta)$, $h(T_{\alpha})$ is :

\begin{itemize}
\item[\textup{(i)}] strictly increasing if $k_1 < k_2$
\item[\textup{(ii)}] constant if $k_1 = k_2$
\item[\textup{(iii)}] strictly decreasing if $k_1 > k_2$
\end{itemize}
\end{theorem}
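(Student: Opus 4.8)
We sketch the strategy, following the natural-extension approach of \cite{NakadaNatsui}. The basic tool is the two–dimensional natural extension $\overline{T}_\alpha\colon\overline{\Omega}_\alpha\to\overline{\Omega}_\alpha$, $\overline{\Omega}_\alpha\subset\mathbb{R}^2$, which preserves the measure $\mu^{\ast}=\frac{dx\,dy}{(1+xy)^2}$ and whose first marginal is the invariant density $\rho_\alpha$; one has the known relation $h(T_\alpha)\,\mu^{\ast}(\overline{\Omega}_\alpha)=\pi^2/6$ (see \cite{NakadaNatsui}, \cite{Kraaikamp}), so that $h(T_\alpha)$ is a strictly decreasing function of the total mass $\mu^{\ast}(\overline{\Omega}_\alpha)$. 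Hence the theorem reduces to showing that, near the given $\alpha$, the function $\alpha'\mapsto\mu^{\ast}(\overline{\Omega}_{\alpha'})$ is strictly increasing if $k_1<k_2$, locally constant if $k_1=k_2$, and strictly decreasing if $k_1>k_2$. Recall that $\overline{\Omega}_\alpha$ is a finite union of rectangles $\Delta\times J_\Delta$, with $\Delta$ a rank-one cylinder of $T_\alpha$ and the fiber $J_\Delta$ prescribed by the positions of the two endpoint orbits $\{T_\alpha^n(\alpha)\}_{0\le n<k_1}$ and $\{T_\alpha^m(\alpha-1)\}_{0\le m<k_2}$, which by (I) and (III) are finite and avoid the discontinuities of $T_\alpha$.

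I would first check that the hypotheses persist on an interval $(\alpha-\eta,\alpha+\eta)$. By (I) the symbolic itineraries of $\alpha$ (for $k_1$ steps) and of $\alpha-1$ (for $k_2$ steps) are disjoint and stay away from the discontinuity set, hence they are locally constant in $\alpha'$; since the matrices $M_{\alpha',\alpha',k_1}$ and $M_{\alpha',\alpha'-1,k_2}$ depend only on these digit strings, they stay equal to $M_{\alpha,\alpha,k_1}$ and $M_{\alpha,\alpha-1,k_2}$, so identity (II) — and with it (III) — is inherited throughout the neighbourhood, and $\overline{\Omega}_{\alpha'}$ keeps the same combinatorial description, its breakpoints depending analytically on $\alpha'$.

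The core is the comparison of $\overline{\Omega}_{\alpha+\delta}$ with $\overline{\Omega}_\alpha$ for small $\delta>0$. Increasing $\alpha$ by $\delta$ adds the sliver $(\alpha,\alpha+\delta)$ to the interval on the right and deletes the sliver $(\alpha-1,\alpha-1+\delta)$ on the left; following these two slivers and their forward images one writes the symmetric difference $\overline{\Omega}_{\alpha+\delta}\triangle\overline{\Omega}_\alpha$ as a disjoint union of thin curvilinear quadrilaterals: $k_1$ \emph{gained} pieces $R^{+}_0,\dots,R^{+}_{k_1-1}$ lying over $\alpha,T_\alpha\alpha,\dots,T_\alpha^{k_1-1}\alpha$, and $k_2$ \emph{lost} pieces $R^{-}_0,\dots,R^{-}_{k_2-1}$ lying over $\alpha-1,\dots,T_\alpha^{k_2-1}(\alpha-1)$. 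Two facts keep this list finite and clean: by (I) the gained and the lost pieces occupy pairwise distinct positions, so no gained piece cancels against a lost one; and by (II) the image of the right sliver after $k_1$ steps and the image of the left sliver after $k_2$ steps land on the very same sliver over the matching point — including their $y$-coordinates, the unipotent factor $\matr{1}{1}{0}{1}$ being exactly the bookkeeping needed to pass from the $(\alpha-1)$-endpoint to the $\alpha$-endpoint — so all further forward images coincide and drop out of the symmetric difference. Since $\overline{T}_{\alpha'}$ preserves $\mu^{\ast}$, all gained pieces have the same $\mu^{\ast}$-measure and all lost pieces have the same $\mu^{\ast}$-measure, and the same unipotent identity forces these two common values to agree, say $\mu^{\ast}(R^{\pm}_j)=c\,\delta+o(\delta)$ with $c>0$; therefore
\[
\mu^{\ast}(\overline{\Omega}_{\alpha+\delta})-\mu^{\ast}(\overline{\Omega}_\alpha)=\sum_{j=0}^{k_1-1}\mu^{\ast}(R^{+}_j)-\sum_{j=0}^{k_2-1}\mu^{\ast}(R^{-}_j)+o(\delta)=(k_1-k_2)\,c\,\delta+o(\delta).
\]
If $k_1=k_2$ the gained and lost pieces are moreover interchanged by the dynamics, so $\overline{T}_{\alpha+\delta}$ is measurably conjugate to $\overline{T}_\alpha$ and $h(T_{\alpha+\delta})=h(T_\alpha)$ throughout the neighbourhood, giving (ii). If $k_1<k_2$ then $\mu^{\ast}(\overline{\Omega}_{\alpha'})$ is strictly increasing near $\alpha$ — the one-sided slopes having the same sign by the analogous computation with $\delta<0$ and the uniformity of the hypotheses on the neighbourhood — hence $h(T_{\alpha'})$ is strictly increasing, which is (i); the case $k_1>k_2$ is symmetric and gives (iii).

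The main obstacle is the geometric bookkeeping in the third step: pinning down the exact shape of $\overline{\Omega}_\alpha$ (in particular at the cylinder boundaries and across the moving endpoint, where a digit may change), identifying the pieces $R^{\pm}_j$ with the correct signs (the Möbius maps involved may reverse orientation, so the sign pattern must be verified rather than guessed), translating the algebraic identity (II) into the geometric equalities $\mu^{\ast}(R^{+}_j)=\mu^{\ast}(R^{-}_i)+o(\delta)$, and controlling the $o(\delta)$ remainders uniformly in $\alpha'$ so that the monotonicity is strict on the whole interval and not merely at the single point $\alpha$.
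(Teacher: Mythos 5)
The paper does not prove this statement; it cites it verbatim as Theorem~2 of Nakada--Natsui \cite{NakadaNatsui}, so the comparison is against that reference. Your sketch follows exactly the strategy of \cite{NakadaNatsui}: realize each $T_{\alpha'}$ via its planar natural extension $\overline{T}_{\alpha'}$ preserving $d\mu^{*}=dx\,dy/(1+xy)^2$ on a domain $\overline{\Omega}_{\alpha'}$, use $h(T_{\alpha'})\,\mu^{*}(\overline{\Omega}_{\alpha'})=\pi^2/6$ to convert the entropy question into a question about the total mass, note the hypotheses (I)--(III) are open conditions so they persist, and then track how $\overline{\Omega}_{\alpha'}$ changes with $\alpha'$ by following the endpoint orbits and using (II) to truncate the accounting. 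The direction of the conclusion (mass goes down if $k_1<k_2$, hence entropy goes up, etc.) is argued correctly.

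The one place where your sketch is weaker than what is actually established is the measure bookkeeping. You obtain a first-order statement
$\mu^{*}(\overline{\Omega}_{\alpha+\delta})-\mu^{*}(\overline{\Omega}_\alpha)=(k_1-k_2)c\,\delta+o(\delta)$,
leaving the $o(\delta)$ term to be controlled uniformly; a first-order-with-error estimate does not by itself yield strict monotonicity on an interval unless the error is shown to be dominated uniformly, which you flag but do not do. What \cite{NakadaNatsui} actually prove (and what this paper records as Proposition~5.1) is the exact identity
$h(T_{\alpha-\eta})\bigl(1+(k_2-k_1)\,\mu_{\alpha}([\alpha-\eta,\alpha])\bigr)=h(T_{\alpha})$,
valid for all sufficiently small $\eta>0$, equivalently
$\mu^{*}(\overline{\Omega}_{\alpha-\eta})-\mu^{*}(\overline{\Omega}_{\alpha})=(k_2-k_1)\,\mu^{*}\bigl(\overline{\Omega}_{\alpha}\cap\{\alpha-\eta<x<\alpha\}\bigr)$,
with no error term. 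The crucial lemma hidden in your phrase ``the unipotent identity forces these two common values to agree'' is precisely the exact (not merely first-order) cancellation of the gained and lost slivers after $k_1$ resp.\ $k_2$ steps: one must check that (II) makes the two image slivers coincide as subsets of the plane, $y$-fibers included, and that the gained and lost pieces occupy disjoint positions so that nothing else interferes. This is where the real work is, and it is worth isolating it as a lemma and proving it before drawing the monotonicity conclusion; once you have the exact identity, monotonicity on the whole interval $(\alpha-\eta,\alpha+\eta)$ is immediate, without any handling of $o(\delta)$ remainders.

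Two smaller remarks. First, your description of the change as merely ``add the sliver $(\alpha,\alpha+\delta)$ and delete the sliver $(\alpha-1,\alpha-1+\delta)$'' elides the fact that the map itself changes (the digit $a_1=\lfloor 1/|x|+1-\alpha\rfloor$ depends on $\alpha$); the cylinders shift with $\alpha$, and the cancellation argument must account for this, not just for the shift of the endpoints of the 1D interval. Second, in the case $k_1=k_2$ you claim a measurable conjugacy between $\overline{T}_{\alpha+\delta}$ and $\overline{T}_{\alpha}$; this is true and indeed is the viewpoint of Kraaikamp and Nakada--Natsui (and is used again in the appendix of this paper to prove constancy of the variance on $[\sqrt2-1,(\sqrt5-1)/2]$), but it is a stronger statement than what the mass formula alone gives you, and you would need to construct the conjugacy explicitly rather than infer it from the vanishing of the first-order term.
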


It turns out that conditions (I)-(II)-(III) define a collection of
open intervals (called \emph{matching intervals}); they also proved
that each of the cases (i), (ii) and (iii) takes place at least on one
infinite family of disjoint matching intervals clustering at the
origin, thus proving the non-monotonicity of the entropy
function. Moreover, they conjectured that the union of all matching
intervals is a dense, open subset of $[0,1]$ with full Lebesgue
measure.

In the following we will analyze more closely the mechanism which
leads to the existence of such matchings. As a consequence, we shall
see that it looks more natural to drop condition (III) from the
previous definition and replace (II) with

\begin{small}
$$\textup{(II$'$)} \qquad M_{\alpha, \alpha, k_1-1} = \pm \matr{1}{1}{0}{1}
  M_{\alpha, \alpha-1, k_2-1} \matr{1}{0}{-1}{-1}$$
\end{small}
(which implies $\frac{1}{T^{k_1-1}(\alpha)} + \frac{1}{T^{k_2-1}(\alpha-1)} = -1$).

\vskip 10 pt
\noindent 
We can now define the \emph{matching set} as
$$\mathcal{M} = \{ \alpha \in (0,1] \textup{ s. t. (I) and (II$'$) hold } \}$$ 
Note $\mathcal{M}$ is open, since the symbolic codings of $\alpha$ up to step $k_1-1$ and of $\alpha-1$ up
to step $k_2-1$ are locally constant.

Moreover, we will see that under this condition it is possible to
predict the symbolic orbit of $\alpha-1$ given the symbolic orbit of
$\alpha$, and viceversa. As an application, we will construct a
countable family of intervals which accumulates in a point different
from $0$.

Let us point out that our definition of matching produces a set slightly
bigger than the union of all matching intervals satisfying condition
(I,II,III): in fact the difference is just a countable set of points.

\subsection{Structure of PGL(2, $\mathbb{Z}$)}

Let us define $PGL(2, \mathbb{Z}) := GL(2, \mathbb{Z})/\{\pm I\}$, $PSL(2, \mathbb{Z}) := SL(2, \mathbb{Z})/\{ \pm I \}$. We have an exact sequence
$$1 \rightarrow PSL(2, \mathbb{Z}) \rightarrow PGL(2, \mathbb{Z}) \rightarrow \{ \pm 1\} \rightarrow 1$$
where the first arrow is the inclusion and the second the determinant; moreover,
if we consider the group 
$$C = \left\{ \matr{\pm 1}{0}{0}{\pm 1} \right\} \cong \frac{\mathbb{Z}}{2\mathbb{Z}} \times \frac{\mathbb{Z}}{2\mathbb{Z}}$$
and let $\overline{C} = C /\{ \pm I \}$, then
$$\overline{C} \cap PSL(2, \mathbb{Z}) = \{e\}$$
therefore we have the semidirect product decomposition
$$PGL(2, \mathbb{Z}) = PSL(2, \mathbb{Z}) \rtimes \overline{C}$$
Now, it is well known that $PSL(2, \mathbb{Z})$ is the free product
$$PSL(2, \mathbb{Z}) = <S> \star <U>$$
where 
$$S = \matr{0}{-1}{1}{0} \qquad U = \matr{0}{-1}{1}{1}$$
are such that $S^2 = I$, $U^3 = I$. 
Geometrically, $S$ represents the function $\{z \rightarrow -\frac{1}{z}\}$,
and if we denote by $T$ the element corresponding to the translation $\{ z \rightarrow z + 1\}$, we have $U = ST$.

The matrix $V = \matr{-1}{0}{0}{1}$ projects to a generator of $\overline{C}$ and it satisfies $V^2 = I$, $VSV^{-1} = VSV = S$ and $VTV^{-1} = T^{-1}$ in $PGL(2, \mathbb{Z})$ 
so we get the presentation
\begin{small}
$$PGL(2, \mathbb{Z}) = \{S, T, V \mid S^2 = I, (ST)^3 = I, V^2 = I, VSV^{-1} = S, VTV^{-1} = T^{-1} \}$$
\end{small}

\subsection{Encoding of matchings}

Every step of the algorithm generating $\alpha$-continued fractions consists of an operation of the type: 
$$ z\mapsto \frac{\epsilon}{z} - c \qquad \epsilon \in \{ \pm 1\}, c \in \mathbb{N}$$ which corresponds to the matrix $T^{-c}SV^{e(\epsilon)}$
with 
$$ e(\epsilon) = \left\{ \begin{array}{ll} 0 &\textup{\  if \ } \epsilon = -1 \\
                                           1 &\textup{\  if \ } \epsilon = 1 \end{array} \right.$$
so if $x$ belongs to the cylinder $((c_1, \epsilon_1), \dots, (c_k, \epsilon_k))$
we can express
$$T_{\alpha}^k(x) = T^{-c_k}SV^{e(\epsilon_k)}\dots T^{-c_1}SV^{e(\epsilon_1)}(x)$$

Now, suppose we have a matching $T^{k_1}_{\alpha}(\alpha) = T^{k_2}_{\alpha}(\alpha-1)$ and let $\alpha$ belong to the cylinder $((a_1, \epsilon_1), \dots, (a_{k_1}, \epsilon_{k_1}))$ and $\alpha -1$ belong to the cylinder 
$((b_1, \eta_1), \dots, (b_{k_2}, \eta_{k_2}))$. One can rewrite the matching condition as

$$T^{-a_{k_1}}SV^{e(\epsilon_{k_1})}\dots T^{-a_1}SV^{e(\epsilon_1)}(\alpha) = T^{-b_{k_2}}SV^{e(\eta_{k_2})}\dots T^{-b_1}SV^{e(\eta_1)}T^{-1}(\alpha)$$
hence it is sufficient to have an equality of the two M\"obius transformations
$$T^{-a_{k_1}}SV^{e(\epsilon_{k_1})}\dots T^{-a_1}SV^{e(\epsilon_1)} = T^{-b_{k_2}}SV^{e(\eta_{k_2})}\dots T^{-b_1}SV^{e(\eta_1)}T^{-1}$$
We call such a matching an \emph{algebraic matching}.
\noindent Now, numerical evidence shows that, if a matching occurs, then 
$$ 
\begin{array}{cll}
\epsilon_1 = +1    && \\[2pt]
\epsilon_{i} = -1  &\;& \textup{ for }2 \leq i \leq k_1-1 \\[2pt]
\eta_i = -1        &\;&\textup{ for }1 \leq i \leq k_2-1 
\end{array}
$$
If we make this assumption we can rewrite the matching condition as
\begin{small}
$$V^{e(\epsilon_{k_1})+1} T^{a_{k_1}(-1)^{e(\epsilon_{k_1})}}ST^{a_{k_1-1}}S \cdots T^{a_1}S = $$
$$= V^{e(\eta_{k_2})} T^{b_{k_2}(-1)^{[e(\eta_{k_2})+1]}}ST^{-b_{k_2-1}}S \cdots T^{-b_1} S T^{-1}$$
\end{small}
which implies $e(\epsilon_{k_1}) = e(\eta_{k_2}) +1$, i.e. $\epsilon_{k_1} \eta_{k_2} = -1$. 
\noindent If for instance $e(\epsilon_{k_1})= 1$ and $e(\eta_{k_2}) = 0$, by substituting $T = SU$ one has

$$(U^2S)^{a_{k_1}} U (SU)^{a_{k_1-1}-2} SU^2 \dots SU^2 (SU)^{a_1-2} SUS = $$
$$ = (U^2S)^{b_{k_2}-1} US (U^2S)^{b_{k_2-1}-2} US \dots US (U^2S)^{b_1-2} US$$

Since every element of $PSL(2, \mathbb{Z})$ can be written as a product of $S$ and $U$ in a unique way, one can get a relation between the $a_r$ and $b_r$. Notice that, since we are interested in $\alpha \leq \sqrt{2}-1$, $a_i \geq 2$ and $b_i \geq 2$ for every $i$, hence there is no cancellation in the equation above. By counting the number of $(U^2S)$ blocks at the beginning of the word, one has $a_{k_1} = b_{k_2}-1$, and by semplifying,

\begin{equation}  \label{matchalg}
\begin{array}{c} (SU)^{a_{k_1-1}-2} SU^2 \dots SU^2 (SU)^{a_1-2} SUS = \\
= S (U^2S)^{b_{k_2-1}-2} US \dots US (U^2S)^{b_1-2} US \end{array} 
\end{equation}
If one has $e(\epsilon_{k_1}) = 0$ and $e(\eta_{k_2}) = 1$ instead, the matching condition is 
$$(SU)^{a_{k_1}-1}SU^2 (SU)^{a_{k_1-1}-2} SU^2 \dots SU^2 (SU)^{a_1-2} SUS = $$
$$ (SU)^{b_{k_2}} SU^2 S (U^2S)^{b_{k_2-1}-2} US \dots US (U^2S)^{b_1-2} US $$
which implies $b_{k_2} = a_{k_1}-1$, and simplifying still yields equation $(\ref{matchalg})$.

Let us remark that $(\ref{matchalg})$ is equivalent to
$$ T^{-1} S T^{-a_{k_1-1}} S \dots T^{-a_1}SV = VS T^{-b_{k_2-1}} S \dots T^{-b_1} S T^{-1} $$
which is precisely condition $(\mathrm{II}')$: by evaluating both sides on $\alpha$

$$\frac{1}{T^{k_1-1}(\alpha)} + \frac{1}{T^{k_2-1}(\alpha-1)} = -1 $$

 Moreover, from $(\ref{matchalg})$ one has that to every $a_r$ bigger than $2$ it corresponds exactly a sequence of $b_i = 2$ of length precisely $a_r-2$, and viceversa. More formally, one can give the following algorithm to produce the coding of the orbit of $\alpha-1$ up to step $k_2-1$ given the coding of the orbit of $\alpha$ up to step $k_1-1$ (under the hypothesis that an algebraic matching  occurs, and at least $k_1$ is known). 

\begin{enumerate}
\item Write down the coding of $\alpha$ from step $1$ to $k_1-1$, separated by a symbol~$\star$ 
$$a_1 \star a_2 \star \dots \star a_{k_1-1}$$
\item Subtract $2$ from every $a_r$; if $a_r = 2$, then leave the space empty instead of writing $0$.
$$a_1-2 \star a_2-2 \star \dots \star a_{k_1-1}-2$$
\item Replace stars with numbers and viceversa (replace the number $n$ with $n$ consecutive stars, and write the number $n$ in place of $n$ stars in a row)
\item Add $2$ to every number you find and remove the stars: you'll get the sequence $(b_1, \dots , b_{{k_2}-1})$. 
\end{enumerate}

\begin{example}
Let us suppose there is a matching with $k_1 = n+3$ and $\alpha$ has initial coding $((3, +), (4, -)^n, (2, -))$. The steps of the algorithm are: 
\begin{itemize}
\item[Step 1] $$3\star\underbrace{4\star4\star\dots\star4\star}_{n\ times}2$$
\item[Step 2] $$1 \star\underbrace{2\star2\star\dots\star2\star}_{n\ times}$$
\item[Step 3] $$\star1\underbrace{\star\star1\star\star 1\dots 1 \star \star 1}_{n \ times}$$
\item[Step 4] $$2\ 3\ \underbrace{2 \ 3 \ \dots 2 \ 3}_{n \ times}$$
\end{itemize}
so the coding of $\alpha-1$ is $((2, -)(3, -))^{n+1}$, and $k_2 = 2n+3$.

\end{example}

\subsection{Construction of matchings}

Let us now use this knowledge to construct explicitly an infinite family of matching intervals which accumulates on a non-zero value of $\alpha$. For every $n$, let us consider the values of $\alpha$ such that $\alpha$ belongs to the cylinder $((3, +),(4,-)^n, (2,-))$ with the respect to $T_{\alpha}$. Let us compute the endpoints of such a cylinder.

\begin{itemize}
 \item The right endpoint is defined by 
$$\matr{-4}{-1}{1}{0}^n\matr{-3}{1}{1}{0}(\alpha) = \alpha-1$$ i.e.
$$\matr{1}{1}{0}{1}\matr{-4}{-1}{1}{0}^n\matr{-3}{1}{1}{0}(\alpha) = \alpha$$
\item The left endpoint is defined by
$$\matr{-4}{-1}{1}{0}^n\matr{-3}{1}{1}{0}(\alpha) = -\frac{1}{\alpha+2}$$ i.e.
$$\matr{-2}{-1}{1}{0}\matr{-4}{-1}{1}{0}^n\matr{-3}{1}{1}{0}(\alpha) = \alpha$$
\end{itemize}

By diagonalizing the matrices and computing the powers one can compute these value explicitly. In particular, 

$$\alpha^1_{min} = \frac{\sqrt{3}-1}{2} + \frac{40\sqrt{3}-69}{13}(2+\sqrt{3})^{-2n} + O((2+\sqrt{3})^{-4n})$$
$$\alpha^1_{max} = \frac{\sqrt{3}-1}{2} + \frac{10\sqrt{3}-12}{13}(2+\sqrt{3})^{-2n} + O((2+\sqrt{3})^{-4n})$$

The $\alpha$s such that $\alpha-1$ belongs to the cylinder $((2,-),(3,-))^{n+1}$ are defined by the equations

$$\left[ \matr{-3}{-1}{1}{0} \matr{-2}{-1}{1}{0} \right]^{n+1}(\alpha-1) = \alpha -1 $$
for the left endpoint and
$$\left[ \matr{-3}{-1}{1}{0} \matr{-2}{-1}{1}{0} \right]^{n+1}(\alpha-1) = \alpha $$
for the right endpoint, so the left endpoint corresponds to the periodic point such that $$\left[ \matr{-3}{-1}{1}{0} \matr{-2}{-1}{1}{0} \right](\alpha-1) = \alpha-1$$ i.e. 
$$\alpha^2_{min} = \frac{\sqrt{3}-1}{2}$$
and

$$\alpha^2_{max} = \frac{\sqrt{3}-1}{2} + \frac{33-19\sqrt{3}}{2} (2+\sqrt{3})^{-2n} + O((2+\sqrt{3})^{-4n})$$
By comparing the first order terms one gets asymptotically

$$\alpha^2_{min} < \alpha^1_{min} < \alpha^2_{max} < \alpha^1_{max}$$
hence the two intervals intersect for infinitely many $n$, producing infinitely many matching intervals which accumulate at the point $\alpha_0 = \frac{\sqrt{3}-1}{2}$. The length of such intervals is 
$$\alpha^2_{max}-\alpha^1_{min} = \frac{567-327\sqrt{3}}{26}(2+\sqrt{3})^{-2n} + O((2+\sqrt{3})^{-4n})$$

\section{Numerical production of matchings} \label{CAmatching}

In this section we will describe an algorithm to produce a lot of
matching intervals (i.e. find out their endpoints exactly), as well as
the results we obtained through its implementation. Our first attempt to find
matching intervals used the following scheme:

\begin{enumerate}
\item We generate a random seed of values $\alpha_i$ belonging to $ [0,1]$ (or some other interval of interest). When a high precision is needed (we manage to detect intervals of size $10^{-60}$) the random seed is composed by algebraic numbers, in order to allow symbolic (i.e. non floating-point) computation. 

 \item We find numerically candidates for the values of $k_1$ and
   $k_2$ (if any) simply by computing the orbits of $\alpha$ and of
   $\alpha-1$ up to some finite number of steps, and numerically
   checking if $T_\alpha^{k_1}(\alpha) = T_\alpha^{k_2}(\alpha-1)$ holds approximately for some $k_1$
   and $k_2$ smaller than some bound.

 \item Given any triplet $(\bar\alpha, k_1, k_2)$ determined as above,  we compute  the
   symbolic orbit of $\bar\alpha$ up to step $k_1-1$ and the orbit of
   $\bar\alpha-1$ up to step $k_2-1$. 

\item We check that the two M\"obius transformations associated to these symbolic orbits satisfy condition $(\mathrm{II}')$:

$$M_{\alpha, \alpha, k_1-1} = \pm \matr{1}{1}{0}{1} M_{\alpha, \alpha-1, k_2-1} \matr{1}{0}{-1}{-1} $$ 

\item We solve the system of quadratic equations which correspond to
  imposing that $\alpha$ and $\alpha-1$ have the same symbolic orbit
  as $\bar\alpha$ and $\bar\alpha-1$, respectively.

Let us remark that this is the heaviest step of the whole procedure since we must
  solve $k_1+k_2-2$ quadratic inequalities; for this reason the value
  $k=k_1+k_2$ may be thought of as a measure of the computational cost
  of the matching interval and will be referred to as {\it order of
    matching}.
\end{enumerate}

Following this scheme, we detected more than $10^7$ matching intervals, whose endpoints are quadratic surds; their union still leaves many gaps, each of which smaller than $6.6\cdot 10^{-6}$. A table with a sample of such data is contained in the appendix. \footnote{A more efficient algorithm, which avoids random sampling, will be discussed in subsection~\ref{bintree}.}

In order to detect some patterns in the data, let us plot the size of these intervals (figure \ref{ms}). For each matching interval $]\alpha_-, \alpha_+[$, we drew the point of coordinates $(\alpha_-, \alpha_+-\alpha_-)$.

 \begin{figure}[!h]
   \centering
\includegraphics[scale=0.3]{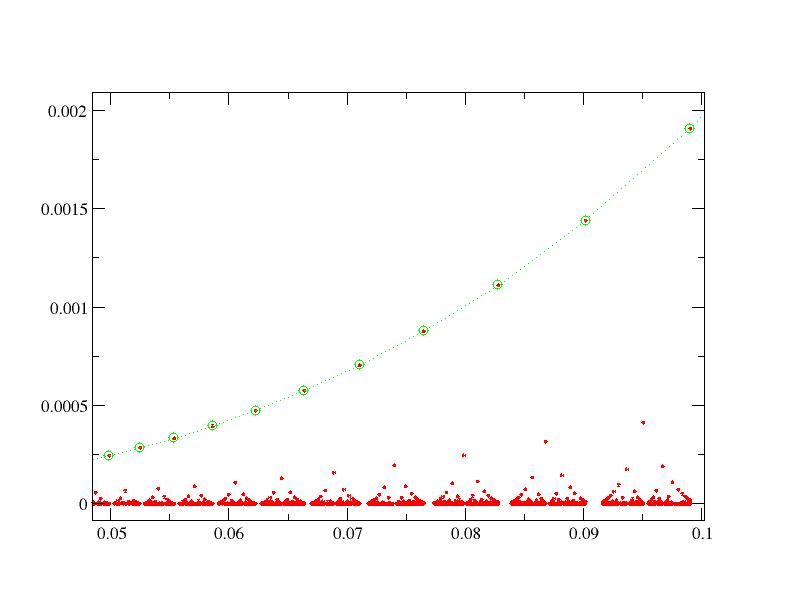}
   \caption{Size of matchings}
\label{ms}
  \end{figure}

It seems there is some self-similar pattern: in order to understand
better its structure it is useful to identify some ``borderline''
families of points. The most evident family is the one that appears as the
higher line of points in the above figure (which we have highlighted in green): these points correspond
to matching intervals which contain the values $1/n$, and their endpoints are $
\alpha_-(n)=\frac{1}{2}[\sqrt{n^2+4}-n]$, $\alpha_+(n)=
\frac{1}{2n-2}[\sqrt{n^2+2n-3}-n+1]$; this is the family $I_n$ already
exhibited in \cite{NakadaNatsui}. Since $\alpha_-(n)=
1/n-1/n^3+o(1/n^3)$ and $\alpha_+(n)= 1/n+1/n^3+o(1/n^3)$, for $n\gg 1$
the points $(\alpha_-(n), \alpha_+(n)-\alpha_-(n))$ are very close to
$(\frac{1}{n},\frac{1}{n^3})$. This suggests that this family will
``straighten'' if we replot our data in log-log scale. This is indeed
the case, and in fact it seems that there are also other families
which get perfectly aligned along parallel lines of slope 3 (see figure \ref{loglogscale}). 

\begin{figure}[!h] 
   \centering
\includegraphics[scale=0.3]{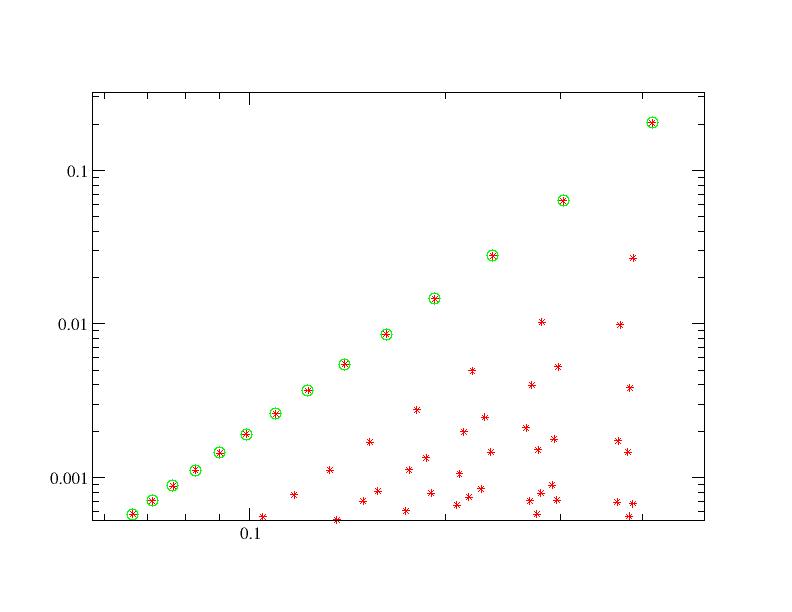}
   \caption{Same picture, in log-log  scale.}
\label{loglogscale}  
\end{figure}

If we consider the ordinary continued fraction expansion of the elements of these families we realize
that they obey to some very simple empirical\footnote{Unfortunately we are still not able to prove all these rules.} rules: 
\begin{enumerate}
\item[(i)] the endpoints of any matching interval have a purely
  periodic continued fraction expansion of the type $[0,\overline{a_1,
      a_2, ..., a_m,1}]$ and $[0,\overline{a_1, a_2, ..., a_m+1}]$;
this implies that the rational number corresponding to $[0,a_1, a_2,
    ..., a_m+1]$ is a common convergent of both endpoints and is the rational with 
smallest denominator which falls inside the matching interval;
\item[(ii)]
any endpoint $[0,\overline{a_1, a_2, ..., a_m}]$  of
a matching interval belongs to a  family $\{ [0,\overline{a, a_2, ..., a_m}] \ : \ a\geq \max_{2\leq i\leq m}a_i\}$; in particular this family has a member in each cylinder $B_n:= \{\alpha \ : 1/(n+1)<\alpha<1/n\}$ for $n \geq a$, so that each family will cluster at the origin.
\item[(ii')] other families can be detected in terms of the continued
  fraction expansion: for instance on each cylinder $B_n$ ($n\geq
  3)$ the largest matching interval on which $h$ is decreasing has
  endpoints with expansion $[0,\overline{n,2,1,n-1,1}]$ and $[0,\overline{n,2,1,n}]$
\item[(iii)]
matching intervals seem to be organized in a binary tree structure, which is related to the Stern-Brocot tree\footnote{Sometimes also known as Farey tree. See \cite{Knuth}.}: 
one can thus design a bisection algorithm to fill in the gaps between intervals, and what it's left over is a closed, nowhere dense set. This and the following points will be analyzed extensively in subsection~\ref{bintree};
\item[(iv)] if $\alpha\in B_n$ is the endpoint of some matching
  interval then $\alpha=[0;\overline{a_1, a_2, ..., a_m}]$ with
  $a_i\leq n \ \forall i\in \{1,...,m\}$; this would imply that the values $\alpha \in B_n$ which do not belong to any  matching interval must be bounded-type numbers with
  partial quotients bounded above by $n$;
\item[(v)] it is possible to compute the exponent $(k_1,k_2)$ of a matching from the continued fraction expansion of any one of its endpoints.
\end{enumerate}

\begin{figure}[!h]   \centering
\includegraphics[scale=0.25]{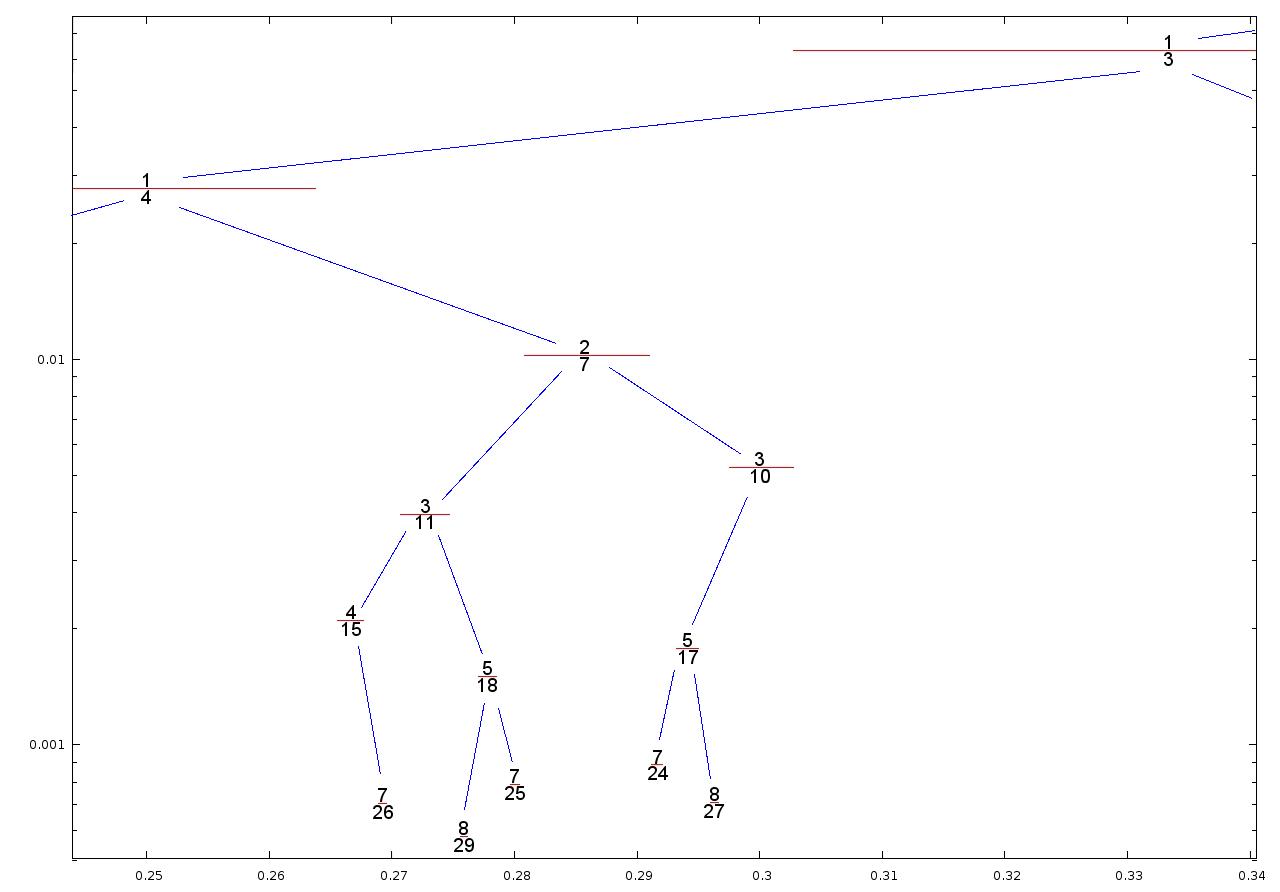}
  \caption{A few of the largest matching intervals in the window [1/4, 1/3], and the corresponding nodes of Stern-Brocot tree. The value on the y axis is the logarithm of the size of each interval.}
\label{demo}
\end{figure}

From our data it is also evident that the size of these intervals
 decreases as $k_1+k_2$ increases, and low order matchings tend to
disappear as $\alpha$ approaches zero. Moreover, as $\alpha$ tends to
$0$ the space covered by members of ``old'' families of type (ii)
encountered decreases, hence new families have to appear. One can
quantify this phenomenon from figure \ref{loglogscale}: since the size
of matching intervals in any family decreases as $1/n^3$ on the
interval cylinder $B_n$ (whose size decreases like $1/n^2$): this
means that, as n increases, the mass of $B_n$ gets more and more split
among a huge number of tiny intervals.

This fact compromises our numerical algorithm: it is clear that
choosing floating point values at random becomes a hopeless strategy
when approaching zero. Indeed, even if there still are intervals
bigger than the double-precision threshold, in most cases the random
seed will fall in a really tiny interval corresponding to a very high
matching order: this amounts to having very little gain as the result
of a really heavy computation.

We still can try to test numerically the conjecture that the matching set has full measure
on $[0,1]$; but we must expect that the percentage of space covered by matching intervals (found numerically)
will decrease dramatically near the origin, since we only detect intervals with $k_1+k_2$ bounded by some threshold. 
The matching intervals we have found so far cover a portion of $0.884$ of the interval $[0,1]$; this ratio increases to $0.989$ if we restrict to the interval
$[0.1,1]$ and it reaches $0.9989$ restricting to the interval 
$[0.2, 1]$.

\begin{figure}[h]
   \centering
   \includegraphics[scale=0.3]{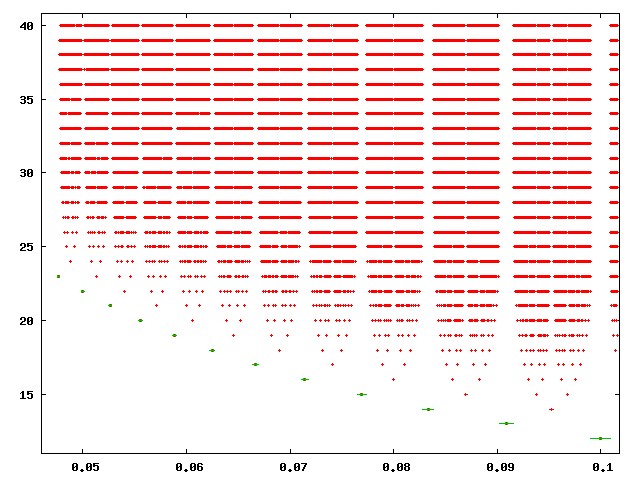}
   \caption{Dependence of the order 
     $k=k_1+k_2$ of a matching interval on the left endpoint  }
\label{hom}
  \end{figure}

The following graph represents the percentage of the interval $[x,1]$
which is covered by matching intervals of order $k=k_1+k_2$ for
different values of $k$\footnote{ Let us point out that for big values
  of $k$ the graph does not take into account all matching intervals
  of order $k$ but only those we have found so far. }.  It gives an
idea of the gain, in terms of the total size covered by matching
intervals, one gets when refining the gaps (i.e. considering matching intervals of higher order).

\begin{figure}[!h]   \centering
\includegraphics[scale=0.3]{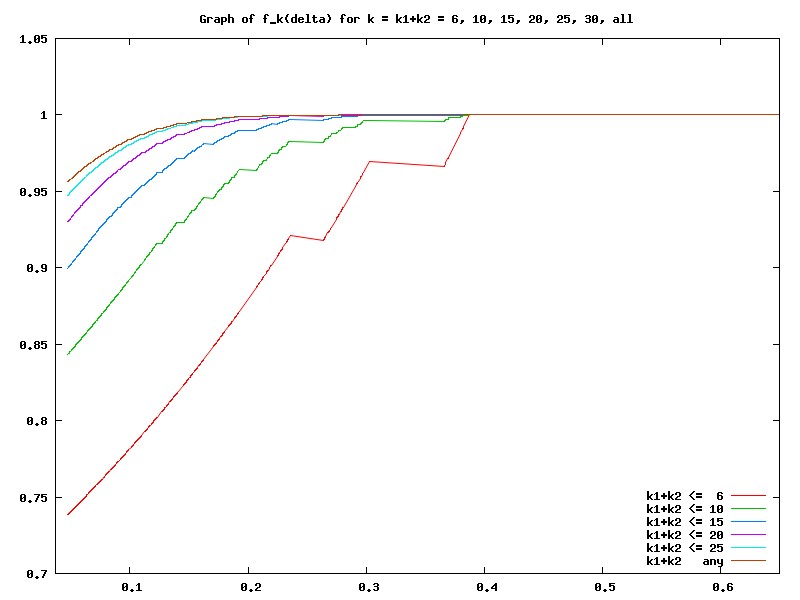}
    \caption{Percentage of covering by matching intervals}
\label{riempimento}
\end{figure}

Finally, to have a more precise idea of the relationship between order
of matching and size of the matching interval it is useful to see the
following scattered plot: the red dots correspond to matching
intervals found using a random seed, and the green ones to intervals found using the bisection algorithm. The two 
lines bounding the cloud correspond to matching intervals with very definite patterns: the upper line corresponds to the family $I_n$ (with endpoints of type $[0; \overline{n}]$ and $[0;\overline{ n-1, 1}]$), the lower line corresponds to matching intervals with endpoints of  type $[0; \overline{2,1,1,...,1,1,1}]$ and $[0; \overline{2,1,1,...,1,2}]$. The latter ones converge to 
$\frac{3-\sqrt{5}}{2}$, which is the supremum of all values where the entropy is increasing. 

Thus numerical evidence shows that, if $J$ is an interval with
matching order $k=k_1+k_2$ then the size of $J$ is bounded below by
$|J|\geq c_0 e^{-c_1k}$ where $c_0= 8.4423...$ and $c_1=
0.9624...$. On the other hand we know for sure that, on the right of
0.0475 (which corresponds to the leftmost matching interval of our
list), the biggest gap left by the matching intervals found so far is
of order $6.6\cdot 10^{-6}$.  So, if $J$ is a matching interval which
still does not belong to our list, either $J\subset [0,0.0475]$ and $k
\geq 20$ (see figure \ref{hom}), or its size must be smaller than $6.6\cdot 10^{-6}$ and by
the forementioned empirical rule, its order must be $k > 14.6$. Hence,
our list should include all matching intervals with $k_1 + k_2 \leq
14$.

\begin{figure}[!h]
   \centering
\includegraphics[scale=0.3]{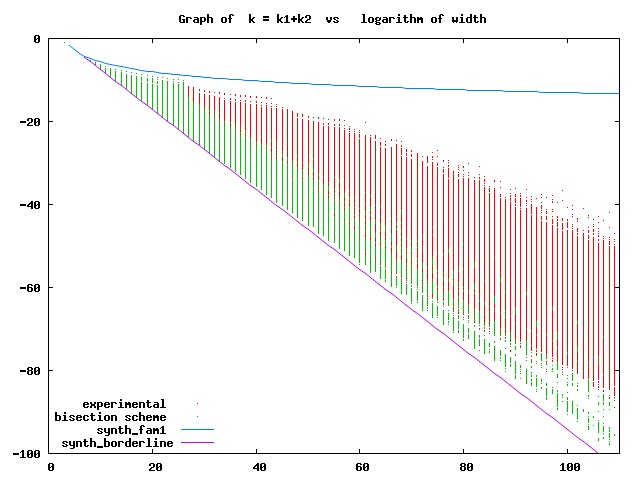}

  \caption{The order $k_1+k_2$ versus the logarithm of the size of the first $10^7$ matching intervals found.}
\label{logamp}
  \end{figure}

\subsection{The matching tree}\label{bintree}

As mentioned before, it seems that matching intervals are organized in a binary tree structure. 
To describe such structure, we will provide an algorithm which allows to construct all 
matching intervals by recursively ``filling the gaps'' between matching intervals previously obtained, similarly 
to the way the usual Cantor middle third set is constructed. 

In order to do so, let us first notice that every rational value $r \in \mathbb{Q}$ has two (standard) continued fraction expansions: 
$$r = [0; a_1,a_2, ... , a_m,  1]=[0; a_1,a_2, ... , a_m+ 1]$$
One can associate to $r$ the interval whose endpoints are the two quadratic surds with continued fraction obtained by endless repetition of the two expansions of $r$: 

\begin{definition} \label{conjstrings}
Given $r \in \mathbb{Q}$ with continued fraction expansion as above, we define $I_r$ to be the interval with endpoints  
$$[0;\overline{a_1,a_2, ... , a_m, 1}] \textup{ and } [0;\overline{a_1,a_2, ... , a_m+ 1}]$$
(in any order). The strings $S_1 := \{a_1, \dots, a_m, 1\}$ and $S_2 := \{a_1, \dots, a_m+1\}$ will be said to be
\emph{conjugate} and we will write $S_2 = (S_1)'$. 
\end{definition}

\noindent Notice that $r \in I_r$. It looks like all matching intervals are of type $I_r$ for some rational $r$. On the other hand, 

\begin{definition}
Given an open interval $I \supseteq [0,1]$ one can define the \emph{pseudocenter} of $I$ as the rational number $r \in I \cap \mathbb{Q}$ which has the minimum denominator among all rational numbers contained in $I$.
\end{definition}
\noindent It is straightforward to prove that the pseudocenter of an interval is unique, and the pseudocenter of $I_r$ is $r$ itself. 

We are now ready to describe the algorithm: 

\begin{enumerate}

\item The rightmost matching interval is $[\frac{\sqrt{5}-1}{2}, 1]$; its complement is the \emph{gap} 
$J = [0, \frac{\sqrt{5}-1}{2}]$.

\item Suppose we are given a finite set of intervals, called \emph{gaps of level $n$}, so that their complement is a union of matching intervals. Given each gap $J=[\alpha^-, \alpha^+]$, we determine its {\em
  pseudocenter} $r$. Let $\alpha^\pm=[0;S, a^\pm, S^\pm]$ be the continued fraction
  expansion of $\alpha^\pm$, where $S$ is the finite string containing
  the first common partial quotients, $a^+\neq a^-$ the first partial
  quotient on which the two values differ, and $S^\pm$ the rest of the
  expansion of $\alpha^\pm$, respectively. The pseudocenter of
  $[\alpha^-, \alpha^+]$ will be the rational number $r$ with expansions
  $[0;S, a, 1 ]=p/q=[0;S, a+1]$ where $a:= \min(a^+, a^-)$.

\item We remove from the gap $J$ the matching interval $I_r$ corresponding to the pseudocenter $r$: in this way the complement of $I_r$ in $J$ will consist of two intervals $J_1$ and $J_2$, which we will add to the list of gaps of level $n+1$. It might occur that one of these new intervals consists of only one point, i.e.  two  matching intervals  are  adjacent. 
\end{enumerate}

By iterating this procedure, after $n$ steps we will get a finite set $\mathcal{G}_n$ of gaps, and clearly $\bigcup_{J \in \mathcal{G}_{n+1}} J \subseteq \bigcup_{ J \in \mathcal{G}_n} J$. We conjecture all intervals obtained by taking pseudocenters of gaps are matching intervals, and that the set on which matching fails is the intersection 
$$\mathcal{G}_\infty:=\bigcap_{n\in \mathbb{N}} \bigcup_{J\in {\cal G}_n} J,$$ 

The next table contains the list of the elements of the family ${\cal G}_n$ of gaps of level $n$
for $n=0..4$: when a gap is reduced to a point we mark the corresponding line with the symbol $\star$.

$$
\begin{array}{|c|c l l|}
\hline
\vrule height1.2em width-1pt depth0.3em & \hbox to 1cm {\hfil\hfil} & \alpha^- & \alpha^+ \\[2pt]
\hline
\vrule height1.2em width-1pt depth0.3em{\cal G}_0 & & 0 & [0;\overline{1}\kern1.pt]\\[2pt]
\hline
\vrule height1.2em width-1pt depth0.3em{\cal G}_1 & & 0 & [0;\overline{2}\kern1.pt]\\[2pt]
           &\star &  [0;\overline{1, 1}\kern1.pt] & [0;\overline{1}\kern1.pt]\\[2pt]
 \hline
\vrule height1.2em width-1pt depth0.3em{\cal G}_2 & & 0 & [0;\overline{3}\kern1.pt]\\[2pt]
           & &  [0;\overline{2, 1}\kern1.pt] & [0;\overline{2}\kern1.pt]\\[2pt]
           &\star &  [0;\overline{1, 1}\kern1.pt] & [0;\overline{1}\kern1.pt]\\[2pt]
 \hline
\vrule height1.2em width-1pt depth0.3em{\cal G}_3 & & 0 & [0;\overline{4}\kern1.pt]\\[2pt]
           & &  [0;\overline{3, 1}\kern1.pt] & [0;\overline{3}\kern1.pt]\\[2pt]
           & &  [0;\overline{2, 1}\kern1.pt] & [0;\overline{2, 1, 1}\kern1.pt]\\[2pt]
           &\star &  [0;\overline{2, 2}\kern1.pt] & [0;\overline{2}\kern1.pt]\\[2pt]
           &\star &  [0;\overline{1, 1}\kern1.pt] & [0;\overline{1}\kern1.pt]\\[2pt]
\hline
\vrule height1.2em width-1pt depth0.3em{\cal G}_4 & & 0 & [0;\overline{5}\kern1.pt]\\[2pt]
           & &  [0;\overline{4, 1}\kern1.pt] & [0;\overline{4}\kern1.pt]\\[2pt]
           & &  [0;\overline{3, 1}\kern1.pt] & [0;\overline{3, 1, 1}\kern1.pt]\\[2pt]
           & &  [0;\overline{3, 2}\kern1.pt] & [0;\overline{3}\kern1.pt]\\[2pt]
           & &  [0;\overline{2, 1}\kern1.pt] & [0;\overline{2, 1,2}\kern1.pt]\\[2pt]
           & &  [0;\overline{2, 1, 1, 1 }\kern1.pt] 
	     & [0;\overline{2, 1, 1}\kern1.5pt]\\[2pt]
           &\star &  [0;\overline{2, 2}\kern1.pt] & [0;\overline{2}\kern1.pt]\\[2pt]
           &\star &  [0;\overline{1, 1}\kern1.pt] & [0;\overline{1}\kern1.pt]\\[2pt]
\hline
... & & ... & ... \\

\hline
\end{array}
$$

\begin{figure}[h]
   \centering
   \includegraphics[scale=0.4]{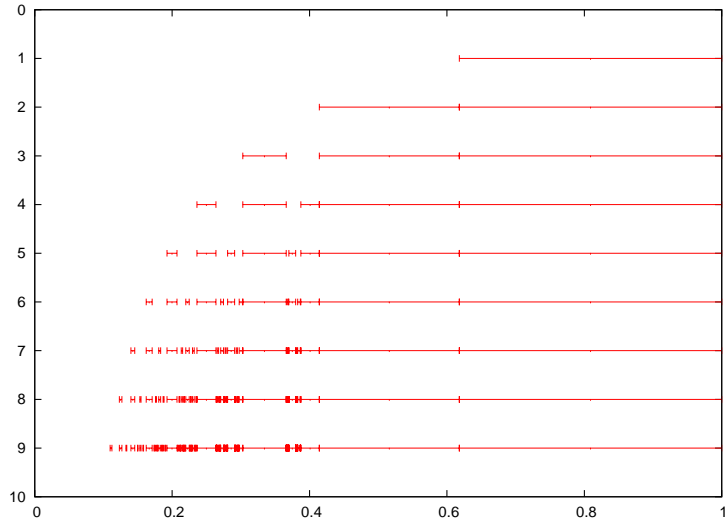}
   \caption{Recursive construction of the matching set}
\label{layers}
\end{figure}

We still cannot prove that this is the right scheme, but the numerical
evidence is quite robust: all $1.1 \cdot 10^6$ intervals obtained by running the first $23$ steps, for instance, turn out to be real matching intervals\footnote{We compared them to the list obtained as in section \ref{CAmatching}}. 

We can also prove the 

\begin{lemma} \label{lemmanbounded}
$\mathcal{G}_\infty$ consists of numbers of bounded type; more precisely, 
the elements of $\mathcal{G}_\infty \cap (\frac{1}{n+1}, \frac{1}{n}]$ have regular continued 
fraction bounded by $n$.
\end{lemma}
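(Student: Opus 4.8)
The statement concerns the set $\mathcal{G}_\infty = \bigcap_n \bigcup_{J \in \mathcal{G}_n} J$ produced by the bisection algorithm, and asserts that every $\alpha \in \mathcal{G}_\infty \cap (\tfrac{1}{n+1}, \tfrac{1}{n}]$ has all regular continued fraction partial quotients $\leq n$. The plan is to track how the common-prefix string $S$ of a gap evolves under one step of the algorithm, and to show by induction on the level that no partial quotient exceeding $n$ can ever be introduced into a gap lying in the cylinder $B_n = (\tfrac{1}{n+1}, \tfrac{1}{n})$.

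First I would set up the bookkeeping. For a gap $J = [\alpha^-, \alpha^+]$ at level $m$, write $\alpha^\pm = [0; S, a^\pm, S^\pm]$ as in the description of step 2, with $a := \min(a^+, a^-)$, so the pseudocenter is $r = [0; S, a, 1] = [0; S, a+1]$ and the removed interval is $I_r$ with endpoints $[0; \overline{S, a, 1}]$ and $[0; \overline{S, a+1}]$. The two children $J_1, J_2$ of $J$ then have common prefixes that extend $S$: one child acquires prefix $(S, a, 1)$ (or a refinement thereof, depending on which side $J_1$ lies and how $S^+$ begins) and the other acquires $(S, a+1)$ — more precisely one must compare $[0;\overline{S,a,1}]$ and $[0;\overline{S,a+1}]$ with $\alpha^\pm$ and read off the new common prefix on each side. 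The key quantitative claim to extract is: \emph{every partial quotient appearing in the prefix strings of any gap in $\mathcal{G}_m$ that intersects $B_n$ is $\leq n$, and moreover the relevant ``new'' partial quotient $a$ (or $a+1$) introduced at a bisection step inside $B_n$ is also $\leq n$.} Granting this for all $m$, any $\alpha \in \mathcal{G}_\infty \cap B_n$ is a nested limit of such gaps; since $\alpha \in B_n$ forces $a_1(\alpha) = n$, and the successive prefixes converge to the full continued fraction of $\alpha$ while staying inside $B_n$ (the gaps shrink to $\alpha$), every partial quotient of $\alpha$ appears in some prefix and is therefore $\leq n$. The boundary point $\alpha = \tfrac{1}{n}$ is handled separately: $\tfrac1n = [0;n]=[0;n-1,1]$ is already an endpoint of a matching interval and not in $\mathcal{G}_\infty$, or the half-open convention is absorbed trivially.

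For the inductive step the main point is a monotonicity/localization fact about which partial quotients can be "created". The base case is $\mathcal{G}_0$: the single gap $[0, [0;\overline 1]]$; its pseudocenter is $[0;1]=\tfrac12$ (since $a^-$ corresponds to $\alpha^-=0$, effectively $a^-=\infty$, and $a^+=1$, so $a=1$), giving child gaps $[0,[0;\overline2]]$ and the point $\{[0;\overline{1,1}]\}$ — consistent with the table. In general, when we bisect a gap $J \subset B_n$, its pseudocenter $r$ has denominator smaller than any rational in $J$; since $\tfrac1n$ and $\tfrac{1}{n+1}$ (or the relevant endpoints bounding the part of the algorithm inside $B_n$) are themselves low-denominator rationals, $r$ lies strictly inside $B_n$ too, and a comparison of denominators (Stern–Brocot/mediant inequality) shows $r$'s continued fraction cannot have a partial quotient exceeding $n$: a partial quotient $\geq n+1$ somewhere in $r$ would, via the standard recursion $q_k = a_k q_{k-1} + q_{k-2}$ run against the competing expansion, force the existence of a rational in $J$ of smaller denominator than $r$, contradicting pseudocentrality — here I would use the characterization of the pseudocenter as the rational with smallest denominator in $J$ together with the fact that truncating at a large partial quotient $a_k$ and replacing it by $1$ yields a nearby rational of much smaller denominator that still lies in $J$ when $a_k$ is too big. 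The partial quotients already present in the prefix $S$ are $\leq n$ by the inductive hypothesis, and the newly appended $a$ or $a+1$ equals $\min(a^+,a^-)$ or that plus one, both of which are $\leq n$ by the same pseudocenter argument applied to the parent.

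The hard part will be the case analysis in step 2 about \emph{exactly} which string becomes the new common prefix of each child and verifying that the "$a+1$" branch cannot push a partial quotient past $n$: one must check that if the parent gap already sits inside $B_n$ then $\min(a^+,a^-) \leq n-1$ whenever the relevant partial quotient is the first one (so $a+1 \leq n$), while for deeper partial quotients the bound $a+1 \leq n$ follows from the pseudocenter-denominator estimate rather than from being a first quotient. Making this uniform — i.e. proving the clean invariant "all partial quotients of all endpoints of all gaps in $\mathcal{G}_m \cap B_n$ are $\leq n$" survives one bisection — is where the real work lies; once that invariant is in hand, passing to the limit $\mathcal{G}_\infty$ and reading off the bound on partial quotients of $\alpha$ is immediate. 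This lemma is exactly the analogue, for the algorithmically-defined $\mathcal{G}_\infty$, of the conjectural rule (iv) for the true complement of $\mathcal{M}$, and under the (unproven) identification $\mathcal{G}_\infty = (0,1]\setminus\mathcal{M}$ it yields Conjecture~\ref{nbounded}.
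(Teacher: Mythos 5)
Your plan and the paper's proof share a common kernel — both lean on the pseudocenter's minimal-denominator property and both finish by passing to a limit of nested gaps — but they put the emphasis in different places, and your version has genuine gaps in exactly the places where the paper does real work.

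The paper does \emph{not} try to prove the invariant you formulate (all partial quotients of prefixes/endpoints of gaps in $B_n$ are $\leq n$) by an explicit induction on the bisection; it simply records that ``the scheme forces'' this bound on the endpoints of the matching intervals inside $B_n$, and then spends the proof on the limiting argument. Concretely, the paper splits into two cases: (a) $J_m = \{\gamma\}$ for all large $m$, in which case $\gamma$ is a shared endpoint of two adjacent matching intervals and therefore of bounded type directly; (b) $J_m$ has positive length for all $m$, in which case it shows the pseudocenter denominators satisfy $q_{m+1} > q_m$, that $|J_m| < 2/q_m$, hence $J_m \to \{\gamma\}$, that $\gamma$ is irrational, and finally that any fixed number of initial partial quotients of $\gamma$ eventually agree with those of the endpoint $\alpha_m$, which are $\leq c_1$. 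Your proposal omits both of these pieces: the degenerate case (a) is not treated at all, and the shrinking in case (b) is asserted (``the gaps shrink to $\alpha$'') rather than derived — without the estimate $|J_m| < 2/q_m$ and the strict monotonicity $q_{m+1} > q_m$, there is nothing forcing the common prefix to grow to infinity, so your conclusion ``every partial quotient of $\alpha$ appears in some prefix'' is unjustified.

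On the other side, the one thing you set out to prove that the paper only asserts — that the bisection cannot introduce a partial quotient $> n$ into a gap inside $B_n$ — you also do not carry out: you explicitly flag the case analysis of how the common prefix of each child is formed and the control of the ``$a+1$'' branch as ``the real work,'' and the denominator argument you sketch (``a partial quotient $\geq n+1$ somewhere in $r$ would force a rational in $J$ of smaller denominator'') is not a correct statement as written — large partial quotients correspond to \emph{good} rational approximations and \emph{small} gaps, so the direction of the implication needs care. In short: the approach is in the right spirit, but the central invariant is left unproved, and the two arguments the paper actually supplies (shrinking of gaps via pseudocenter denominators, and the adjacent-interval degenerate case) are missing.
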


\begin{proof}
The scheme described before forces all endpoints of matching intervals containted in
the cylinder $B_n= ]1/(n+1), 1/n[$ to have quotients bounded by
    $n$. We now claim  that, if $\gamma=[0;c_1, c_2,..., c_n,
      ...]\notin {\cal M}$, then,  $c_k\leq c_1$ for
  all $k \in \mathbb{N}$. 

If  $\gamma\notin \mathcal{M}$ then
 $\gamma \in \bigcup_{J\in \mathcal{G}_n} J$ for all $n\in \mathbb{N}$; let us call $J_n$ the member
    of the family $\mathcal{G}_n$ containing $\gamma$. It may happen that there
  exists $n_0$ such that $J_n=\{\gamma\} \ \forall n\geq n_0$, so. $\gamma$ is an endpoint of two adiacent matching intervals, hence it has bounded type. Otherwise,
  $J_n=[\alpha_n, \beta_n]$ with $\beta_n-\alpha_n>0 \ \forall n>c_1$,
  where $\alpha_n, \beta_n$ are the endpoints of two matching
  intervals. Now, if   $p_n/q_n$ is the pseudocenter of $J_n$ from the minimality of $q_n$
  it follows that $|\beta_n -\alpha_n|< 2/q_n$, but also that
  $q_{n+1}>q_n$ (since $p_{n+1}/q_{n+1} \in J_{n+1}\subset J_n$); these
two properties together imply that $0 \leq \gamma - \alpha_n< 2/q_n \to 0$ as $n \to
  +\infty$. This implies $\gamma$ cannot be rational, since $\gamma \in
  J_n \ \forall n $ and the minimum denominator of a rational sitting
  in $J_n $ is $q_n \to +\infty$. Hence, since $\alpha_n \rightarrow \gamma$, 
  for every fixed $k \in \mathbb{N}$, there is some $n(k)$ such that for all
  $n\geq n(k)$ all the partial quotients up to level $k$ of $\gamma$
  coincide with those of $\alpha_n$, which are bounded by $c_1$.
\end{proof}

As a consequence, the validity of our algorithm ($\mathcal{G}_\infty = [0,1]\setminus \mathcal{M}$) would imply  
the conjecture \ref{nbounded}. \footnote{Our conjecture implies that also the set where the original conditions given by Nakada-Natsui hold has full measure;  the equivalent of lemma \ref{lemmanbounded} is, however, not true for their matching set, which differs from ours for a countable number of points.}

Notice $\mathcal{G}_{\infty} \cap (1/(n+1), 1/n]$ has Hausdorff dimension
strictly smaller than one for each $n$.  Moreover, the Hausdorff dimension of $n$-bounded numbers 
tends to $1$ as $n \rightarrow \infty$. We think that, similarly,  $\textup{H.dim}\{ (\frac{1}{n+1}, \frac{1}{n}] \setminus \mathcal{M} \} \rightarrow 1$: this would explain why finding
matching intervals near the origin becomes a tough task.

\begin{remark}
Since we have associated a rational number to each matching interval, one can think of the bisection 
algorithm as acting on $\mathbb{Q}$, and get a binary tree whose nodes are rationals: this object is related to
the well-known \emph{Stern-Brocot} tree. (For an introduction to it, see \cite{Knuth}). 
\end{remark}

Given that all matching intervals correspond to some rational number, one can ask which subset of $\mathbb{Q}$ actually arises in that way.

\begin{definition}
An interval $I_r$, $r \in \mathbb{Q}$ is \emph{maximal} if $I_r \supseteq I_{r'}$ $\forall r' \in I_r \cap \mathbb{Q}$.
\end{definition}

We conjecture that the matching intervals are \emph{precisely} the maximal intervals, so that the matching set is 
$$ \mathcal{M} = \bigcup_{r \in [0,1]\cap \mathbb{Q}} I_r = \bigcup_{\stackrel{r \in [0,1]\cap \mathbb{Q}}{I_r \textup{ maximal}}} I_r$$

As a matter of fact we can actually prove that the complement of the family $\mathcal{G}_n$ produced by the bisection algorithm consists of a family of maximal intervals: the proof of this fact is rather technical and  will appear in a forthcoming paper.

We have also found an empirical rule to reconstruct the periods $(k_1, k_2)$
 of a matching interval from  the labels of its enpoints. Let
$S=[a_1,...,a_\ell]$ be a label of the endpoint $s$ of some matching interval:
\begin{enumerate}
\item If $s$ is a left endpoint then 
$$ k_1=2+\sum_{j \ {\rm even}} a_j,\ \ \ k_2=\sum_{j \ {\rm odd}} a_j.$$ 
\item If $s$ is a right endpoint then 
$$ k_1=1+\sum_{j \ {\rm even}} a_j,\ \ \ k_2=1+\sum_{j \ {\rm odd}} a_j.$$ 
\end{enumerate}

Trusting this rule, we are able to prove that every neighbourhood of
the point $[0,3,\bar{1}]$ contains intervals of matching of all types:
with $k_1<k_2$, $k_1=k_2$ and $k_1>k_2$.  Indeed, it is not difficult
to realize that $[0,3,\bar{1}]$ is contained in the family of gaps
$J_P$ of endpoints $[0,\overline{3, P}]$ and $[0,\overline{3, P, 1}]$
where $P$ is a string of the type $1, 1, ..., 1, 1$ of even length; by
our rule the left endpoint of $J_P$ is the right endpoint of an
interval of matching where $k_1<k_2$. Nevertheless, performing a few
steps of the algorithm, it is not difficult to check that the gap
$J_P$ contains the interval $C_P$ of enpoints $[0,\overline{3, P, 2,
    1, 1}]$ and $[0,\overline{3, P, 2, 1, 1}]$ (on which $k_1=k_2$)
but also $D_P$ of enpoints $[0,\overline{3, P, 2, 1, 2, 1}]$and
$[0,\overline{3, P, 2, 1, 3}]$ (on which $k_1>k_2$).

\subsection{Adjacent intervals and period doubling} \label{1accum}

Let us now focus on pairs of adjacent intervals (corresponding to isolated points in $[0,1]\setminus\mathcal{M}$):
our data show they all come in infinite chains, and can be obtained from some starting matching interval via a ``period doubling'' construction.

\noindent
Let's start with a matching interval $\left]\alpha,\beta\right[\;$;
$\alpha= [0;\overline{S}\kern1pt]$ where $S$ is a sequence of positive
integer of odd length;
define the sequence of strings
\begin{equation} \label{recur}
\left\{
\begin{array}{l}
S_0=S\\
S_{n+1}=(S_nS_n)'
\end{array}
\right .
\end{equation}
where $S'$ denotes the conjugate of $S$ as in def. \ref{conjstrings}.
Let $a_n:=[0;\overline{S_{n}}]$  and $b_n:=[0;\overline{S'_n}]$; then
the sequence $I_n:=]a_n,b_n[$ is formed by a chain of adjacent
    intervals: clearly $b_{n+1}=a_n$, moreover $a_n<b_n$ because
    $|S_n|$ is odd for all $n$.

Assuming this scheme, we can construct many cluster points of matching intervals. For instance, let us look at the first (i.e. rightmost) one: we start with the interval $](\sqrt{5}-1)/2, \ 1[$ so that the first terms of the sequence $S_n$ are

$$
\begin{array}{lll}
S_0 &=&(1)\\
S_1 &=&(2)\\
S_3 &=&(2,1,1)\\
S_4 &=&(2,1,1,2,2)\\
S_5 &=&(2,1,1,2,2,2,1,1,2,1,1)
\end{array}
$$

The corresponding sequence
$a_n$ converges to the first (i.e. rightmost) point $\hat{\alpha}$ where intervals of
matching cluster. We can also  determine the continued fraction expansion of
 the value $\hat{\alpha}$, since it can be obtained just merging\footnote{This can be done since, by \eqref{recur}, $S_n$ is a substring of $S_{n+1}$.} the strings $(S_n)_n\in \mathbb{N}$
\begin{footnotesize}
$$\hat{\alpha}=[0,2,1,1,2,2,2,1,1,2,1,1,2,1,1,2,2,2,1,1,2,2,2,1,1,2,2,2,1,1,2,1,1,2,1,1,2,2,2,...]$$
\end{footnotesize}
Numerically\footnote{This pattern has been checked up to level 10,
  which corresponds to a matching interval of size smaller than
  $10^{-200}$; see also the second table
  in section \ref{tables}.},
 $\hat{\alpha} \cong
0.386749970714300706171524803485580939661$\dots

It is evident from formula \eqref{recur} that any such cluster point will be a bounded-type number;
one can indeed prove that no cluster point of this type is a quadratic surd.

\section{Behaviour of entropy inside the matching set}

In \cite{NakadaNatsui}, the following formula is used to relate the change of entropy between two sufficiently
close values of $\alpha$ to the invariant measure corresponding to one
of these values: more precisely

\begin{proposition} \label{fmlaNakada}
Let us suppose the hypotheses of prop. \ref{matchNakada} hold for $\alpha$: then for $\eta > 0$ small enough 

\begin{equation} \label{derivEntropy}
h(T_{\alpha-\eta}) = \frac{h(T_{\alpha})}{1 + (k_2 - k_1)\mu_{\alpha}([\alpha-\eta, \alpha])}
\end{equation}
and similarly
\begin{equation} \label{derivEntropy2}
h(T_{\alpha}) = \frac{h(T_{\alpha+\eta})}{1 + (k_2 - k_1)\mu_{\alpha+\eta}([\alpha, \alpha+\eta])}
\end{equation}
\end{proposition}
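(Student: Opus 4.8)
The plan is to recover \eqref{derivEntropy} and \eqref{derivEntropy2} by following the natural-extension argument of \cite{NakadaNatsui}. Recall that $T_\alpha$ has a natural extension $\overline T_\alpha$ acting on a two-dimensional domain $\Omega_\alpha\subset\mathbb{R}^2$ and preserving the planar measure $d\overline\mu_\alpha=(1+xy)^{-2}\,dx\,dy$ (see \cite{Nakada81}, \cite{Kraaikamp}); its first marginal, suitably normalized, is exactly $\mu_\alpha$, and $h(\overline T_\alpha)=h(T_\alpha)$. Put $C_\alpha:=\overline\mu_\alpha(\Omega_\alpha)$. Two facts are needed: (a) the identity $h(T_\alpha)\,C_\alpha=\tfrac{\pi^2}{6}$, valid for every $\alpha$ satisfying the hypotheses of Proposition \ref{matchNakada} (it is obtained by computing the entropy of $\overline T_\alpha$ directly on $\Omega_\alpha$, which under a matching is a finite union of rectangles); and (b) that $\overline\mu_\alpha$ and $\overline\mu_{\alpha'}$ are restrictions of one and the same $\mathrm{PGL}(2,\mathbb{Z})$-invariant planar measure, so the whole problem reduces to understanding how $\overline\mu_\alpha(\Omega_\alpha)$ varies when $\alpha$ is moved by $\eta$.

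First I would fix $\eta>0$ small enough that the codings of the orbit of $\alpha$ up to step $k_1-1$ and of $\alpha-1$ up to step $k_2-1$ stay constant for all parameters in $[\alpha-\eta,\alpha]$ — possible because those codings are locally constant and, by (III), the matching point stays away from the endpoints — and small enough that (I) still yields disjointness of the pieces below. Passing from $\alpha$ to $\alpha-\eta$ deletes $[\alpha-\eta,\alpha]$ from the right end of $I_\alpha$ and adjoins $[\alpha-1-\eta,\alpha-1)$ at the left end. Lifted to the plane, $\Omega_{\alpha-\eta}$ is obtained from $\Omega_\alpha$ by removing the finite $\overline T_\alpha$-orbit $R,\overline T_\alpha R,\dots,\overline T_\alpha^{\,k_1-1}R$ of the slab $R$ of $\Omega_\alpha$ sitting over $[\alpha-\eta,\alpha]$, and adjoining the orbit $R',\overline T_\alpha R',\dots,\overline T_\alpha^{\,k_2-1}R'$ of the slab $R'$ sitting over $[\alpha-1-\eta,\alpha-1)$. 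The role of the matching condition (II) (equivalently (II$'$)) is precisely that these two orbits \emph{close up}: $\overline T_\alpha^{\,k_1}R$ and $\overline T_\alpha^{\,k_2}R'$ coincide, so all further alterations cancel and the net change of domain is exactly these $k_1+k_2$ strips; moreover, since the matrices $M_{\alpha,\alpha,k_1}$ and $M_{\alpha,\alpha-1,k_2}$ differ only by the translation $T$, transporting everything back by elements of $\mathrm{PGL}(2,\mathbb{Z})$ shows $\overline\mu_\alpha(R)=\overline\mu_\alpha(R')$.

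Because $\overline T_\alpha$ preserves $\overline\mu_\alpha$, every strip above has the same mass $m:=\overline\mu_\alpha(R)$, and by (I) the $k_1+k_2$ strips are pairwise disjoint, so masses add. Integrating $(1+xy)^{-2}$ over the vertical slab above $[\alpha-\eta,\alpha]$ and using $\rho_\alpha=C_\alpha^{-1}\int(1+xy)^{-2}\,dy$ gives $m=C_\alpha\,\mu_\alpha([\alpha-\eta,\alpha])$. Hence
\[
C_{\alpha-\eta}=C_\alpha-k_1 m+k_2 m=C_\alpha\bigl(1+(k_2-k_1)\,\mu_\alpha([\alpha-\eta,\alpha])\bigr),
\]
and dividing $h(T_{\alpha-\eta})=\tfrac{\pi^2}{6}/C_{\alpha-\eta}$ by $h(T_\alpha)=\tfrac{\pi^2}{6}/C_\alpha$ gives \eqref{derivEntropy}. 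For \eqref{derivEntropy2} one repeats the computation with the pair $(\alpha,\alpha+\eta)$ in place of $(\alpha-\eta,\alpha)$ — the matching data persist on a two-sided neighbourhood of the original parameter — obtaining $C_{\alpha+\eta}=C_\alpha/\bigl(1+(k_2-k_1)\mu_{\alpha+\eta}([\alpha,\alpha+\eta])\bigr)$, which rearranges to \eqref{derivEntropy2}.

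The hard part is the geometric bookkeeping in the second paragraph: one must describe $\Omega_\alpha$ concretely enough (a union of horizontal strips indexed by the admissible cylinders) to see that the perturbation $\alpha\mapsto\alpha-\eta$ affects $\Omega_\alpha$ only through the slabs $R$, $R'$ and their forward iterates, to check via (II)/(II$'$) that the tails of the two orbits agree so that nothing else moves, and to use (I) to rule out overlaps that would destroy additivity of the measure. This is exactly the core of the proof of \cite[Thm.~2]{NakadaNatsui}; everything surrounding it is elementary manipulation, together with the entropy--covolume identity (a).
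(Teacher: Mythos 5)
The paper does not prove this proposition; it is quoted from Nakada and Natsui (\cite{NakadaNatsui}) and used as a black box, so there is no in-paper argument to compare against. Your reconstruction is a reasonable sketch of the Nakada--Natsui natural-extension argument and the outline is sound, but two points should be tightened. First, you do not actually need the absolute identity $h(T_\alpha)\,C_\alpha=\pi^2/6$, which is itself nontrivial even for matching $\alpha$; the proposition only requires that the product $h(T_\alpha)\,C_\alpha$ be the \emph{same} at $\alpha$ and at $\alpha-\eta$, and this follows directly from Abramov's formula applied to the first-return maps of $\overline T_\alpha$ and $\overline T_{\alpha-\eta}$ to the common region $\Omega_\alpha\cap\Omega_{\alpha-\eta}$ (the two induced systems there coincide), without ever evaluating the constant. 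Second, the equality $\overline\mu_\alpha(R)=\overline\mu_\alpha(R')$ cannot be obtained from $\overline T_\alpha$-invariance alone, since $R'$ lies outside $\Omega_\alpha$; what one actually uses is that all $k_1+k_2$ strips in question are images of one another under elements of $\mathrm{PGL}(2,\mathbb{Z})$ acting diagonally on the plane (which preserves $(1+xy)^{-2}\,dx\,dy$), together with (II$'$) to see that the two orbits close on the same strip. Granting these, the scheme is correct and matches the intended source.
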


By exploiting these formulas, we will get some results on the behaviour of $h(T_{\alpha})$.

\subsection{One-sided differentiability of $h(T_\alpha)$}

Equation (\ref{derivEntropy}) has interesting consequences on the differentability of $h$: 
we can rewrite it as
$$h(T_{\alpha}) - h(T_{\alpha-\eta}) = h(T_{\alpha-\eta})(k_2 - k_1)\mu_{\alpha}([\alpha-\eta, \alpha])$$
and dividing by $\eta$
$$\frac{h(T_{\alpha}) - h(T_{\alpha-\eta})}{\eta} = h(T_{\alpha-\eta})(k_2 - k_1)\frac{\mu_{\alpha}([\alpha-\eta, \alpha])}{\eta}$$
\\
Since $\rho_{\alpha}$ has bounded variation, then there exists $R(\alpha) = \lim_{x\rightarrow \alpha^{-}} \rho_{\alpha}(x)$, therefore  
 $$\lim_{\eta\rightarrow 0} \frac{\mu_{\alpha}([\alpha-\eta, \alpha])}{\eta}= R(\alpha)$$
  and by the continuity of $h$ (which is obvious in this case by equation (\ref{derivEntropy}))
$$\lim_{\eta \rightarrow 0}\frac{h(T_{\alpha})-h(T_{\alpha-\eta})}{\eta} = h(T_{\alpha})(k_2 - k_1)\lim_{x\rightarrow \alpha^{-}} \rho_{\alpha}(x)$$
hence the function $\alpha \mapsto h(T_{\alpha})$ is left differentiable in $\alpha$. On the other hand, one can slightly modify the proof of $(\ref{derivEntropy2})$ and realize it is equivalent to

$$h(T_{\alpha+\eta}) =\frac{h(T_{\alpha})}{1 + (k_1-k_2)\mu_{\alpha}([\alpha-1, \alpha-1 + \eta])}$$ 
which reduces to
$$\frac{h(T_{\alpha+\eta}) - h(T_{\alpha})}{\eta} = \frac{ \mu_{\alpha}([\alpha-1, \alpha-1+\eta])}{\eta} \frac{ h(T_{\alpha})(k_2-k_1)}{1 + (k_1 - k_2)\mu_{\alpha}([\alpha-1, \alpha-1+\eta])}$$

\noindent
Since the limit
$$\lim_{\eta \rightarrow 0} \frac{\mu_{\alpha}([\alpha-1, \alpha-1+\eta])}{\eta} = \lim_{x \rightarrow (\alpha-1)^{+}} \rho_{\alpha}(x)$$ 
also exists, then $h(T_{\alpha})$ is also right differentiable in $\alpha$, more precisely
$$\lim_{ \eta \rightarrow 0} \frac{h(T_{\alpha+\eta}) - h(T_{\alpha})}{\eta} = h(T_{\alpha})(k_2-k_1) \lim_{x \rightarrow (\alpha-1)^{+}} \rho_{\alpha}(x)$$

We conjecture that in such points the left and right derivatives are equal. This is trivial for $k_1 = k_2$; for $k_1 \neq k_2$ it is equivalent to say $\lim_{x\rightarrow \alpha^{-}} \rho_{\alpha}(x) = \lim_{x \rightarrow (\alpha-1)^{+}} \rho_{\alpha}(x)$.

\subsection{The entropy for $\alpha \geq \frac{2}{5}$}

\begin{corollary} 
For $\frac{2}{5} \leq \alpha \leq \sqrt{2}-1$, the entropy is 
$$h(T_{\alpha}) = \frac{\pi^2}{6 \log \left(\frac{\sqrt{5}+1}{2}\right)}$$
\end{corollary}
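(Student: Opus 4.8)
The plan is to realise $[\tfrac25,\sqrt2-1]$ as lying inside a single matching interval of type $k_1=k_2$ --- in fact inside the matching interval $I_{2/5}$ whose pseudocenter is $\tfrac25$ --- so that $h$ is locally constant there, and then to pin down the value of the (hence globally constant) entropy by gluing continuously to a region where it has already been computed. Write $G=\tfrac{\sqrt5+1}{2}$. The anchor value is supplied by the results quoted in the introduction: Nakada's computation on $[\tfrac12,1]$ together with the Cassa--Marmi--Moussa computation on $[\sqrt2-1,\tfrac12]$ gives $h(T_\alpha)=\tfrac{\pi^2}{6\log G}$ on all of $[\sqrt2-1,\tfrac{\sqrt5-1}{2}]$; in particular $h(T_{\sqrt2-1})=\tfrac{\pi^2}{6\log G}$.

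The core is a direct computation with the orbits of $\alpha$ and $\alpha-1$. For every $\alpha\in(\tfrac25,\sqrt2-1)$ one checks from the definition of $T_\alpha$ --- the arguments of the relevant floor operations are explicit monotone rational functions of $\alpha$ which stay strictly between consecutive integers on the open interval --- that the $\alpha$-continued fraction of $\alpha$ begins with $(a_1,\epsilon_1)=(3,+1)$, $(a_2,\epsilon_2)=(2,-1)$ and that of $\alpha-1$ begins with $(b_1,\eta_1)=(2,-1)$, $(b_2,\eta_2)=(3,-1)$. Hence $M_{\alpha,\alpha,2}=\matr{0}{1}{1}{3}\matr{0}{-1}{1}{2}=\matr{1}{2}{3}{5}$ and $M_{\alpha,\alpha-1,2}=\matr{0}{-1}{1}{2}\matr{0}{-1}{1}{3}=\matr{-1}{-3}{2}{5}$, and a one-line multiplication gives $\matr{1}{1}{0}{1}\,M_{\alpha,\alpha-1,2}\,\matr{1}{0}{-1}{-1}=-M_{\alpha,\alpha,2}$, so that $(\mathrm{II}')$ holds on the whole interval with $k_1-1=k_2-1=2$. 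As noted after $(\ref{matchalg})$, this is equivalent to $\tfrac{1}{T^2_\alpha(\alpha)}+\tfrac{1}{T^2_\alpha(\alpha-1)}=-1$; since moreover $T^2_\alpha(\alpha)<0$ throughout $(\tfrac25,\sqrt2-1)$ (again an explicit monotone function of $\alpha$, vanishing exactly at the left endpoint $\tfrac25$), the third symbols of the two orbits turn out to be complementary, $\epsilon_3=-\eta_3$ and $b_3=a_3-1$, which forces $T^3_\alpha(\alpha)=T^3_\alpha(\alpha-1)$; together with the matched codings this is condition $(\mathrm{II})$ of Theorem \ref{matchNakada} with $k_1=k_2=3$.

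It remains to check $(\mathrm{I})$ and $(\mathrm{III})$. Writing the nine points $T^n_\alpha(\alpha),\,T^m_\alpha(\alpha-1)$ with $0\le n,m\le 2$ as explicit M\"obius images of $\alpha$, most of the coincidences are impossible because the corresponding ranges are disjoint for $\alpha\in(\tfrac25,\sqrt2-1)$, and the few surviving ones reduce to $\alpha^2+2\alpha-1=0$ or $\alpha(5\alpha-2)=0$, whose only roots in $[\tfrac25,\sqrt2-1]$ are the two endpoints; hence $(\mathrm{I})$ holds on the open interval. Condition $(\mathrm{III})$, $T^3_\alpha(\alpha)\notin\{\alpha,\alpha-1\}$, is handled the same way (a failure would impose, through $(\mathrm{II})$, a second algebraic relation on $\alpha$ and so cannot occur on the interval). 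By Theorem \ref{matchNakada}(ii), equivalently by Proposition \ref{fmlaNakada} with $k_2-k_1=0$, $h$ is locally constant at every point of $(\tfrac25,\sqrt2-1)$; since $h$ is continuous and the interval is connected, $h$ is constant on $[\tfrac25,\sqrt2-1]$, and by the first paragraph this constant equals $h(T_{\sqrt2-1})=\tfrac{\pi^2}{6\log G}$.

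The main obstacle is purely the bookkeeping in the middle step: one must control, uniformly in $\alpha\in(\tfrac25,\sqrt2-1)$, the first two digits of both continued fractions and the sign of $T^2_\alpha(\alpha)$, which amounts to verifying that a short list of explicit rational functions of $\alpha$ stays within prescribed bounds on the interval. Once this is granted, the matrix identity $(\mathrm{II}')$, the sign argument delivering $(\mathrm{II})$, the verifications of $(\mathrm{I})$ and $(\mathrm{III})$, and the final passage from ``locally constant off the two endpoints'' to ``constant on the closed interval'' via continuity of $h$ are all routine.
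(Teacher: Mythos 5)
Your proposal is correct and follows essentially the same line as the paper: verify that every $\alpha$ in $(2/5,\sqrt{2}-1)$ satisfies the matching hypotheses with $k_1=k_2=3$, invoke Theorem \ref{matchNakada}(ii) to get local constancy, and then use continuity of $h$ together with the known value $h(T_{\sqrt{2}-1})=\pi^2/(6\log\frac{\sqrt5+1}{2})$ at the endpoint. You merely make explicit the digit/matrix bookkeeping that the paper's one-line proof leaves to the reader, and identify the ambient matching interval as $I_{2/5}=\bigl(\frac{-2+\sqrt{10}}{3},\sqrt{2}-1\bigr)$, consistent with the $(3,3)$ entry in the paper's table.
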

\begin{proof}
Every $\alpha$ in the interval $(0.4 ,\sqrt{2}-1)$ satisfies the hypotheses of the theorem with $k_1 = k_2 = 3$, hence $h(T_{\alpha})$ is locally constant, and by continuity $h(T_{\alpha}) = h(T_{\sqrt{2}-1})$, whose value was already known.

\end{proof}

\begin{remark}
By using our computer-generated matching intervals, we can analogously prove $h(T_{\alpha}) = h(T_{\sqrt{2}-1})$ for \begin{small}$\sqrt{2}-1 \geq \alpha \geq  0.386749970714300706171524...$\end{small}
\end{remark}

\subsection{Invariant densities}

In the case $\alpha \geq \sqrt{2}-1$ it is known that invariant densities are of the form
$$\rho_{\alpha}(x) = \sum_{i = 1}^r \chi_{I_i}(x)\frac{A_i}{x+B_i}$$
where the $I_i$ are subintervals of $[\alpha-1, \alpha]$. 

For these values of $\alpha$, a matching condition is present and the endpoints of the $I_i$ (i.e. the values where the density may ``jump'') correspond exactly to the first few iterates of $\alpha$ and $\alpha-1$ under the action of $T_{\alpha}$. We present some numerical evidence in order to support the 

\begin{conjecture}\label{NN}
Let $\alpha \in [0,1]$ be a value such that one has a matching of type $(k_1, k_2)$ (i.e. with $T_{\alpha}^{k_1}(\alpha) = T_{\alpha}^{k_2}(\alpha-1)$). Then the invariant density has the form
\begin{equation}\label{density}\rho_{\alpha}(x) = \sum_{i = 1}^r \chi_{I_i}(x)\frac{A_i}{x+B_i} \end{equation}
where each $I_i$ is an interval with endpoints contained in the set $$S := \{T_{\alpha}^m(\alpha) : 0 \leq m < k_1 \} \cup \{T_{\alpha}^n(\alpha-1) :  0 \leq n < k_2 \}$$
Therefore, the number of branches is bounded above by $k_1+k_2-1$.
\end{conjecture}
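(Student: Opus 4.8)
Here is how I would try to prove Conjecture~\ref{NN}. The plan is to go through the two-dimensional \emph{natural extension}. Recall that $T_\alpha$ admits a natural extension $\bar T_\alpha$ on a domain $\Omega_\alpha\subset\mathbb{R}^2$ fibred over $I_\alpha=[\alpha-1,\alpha]$, of the form $\bar T_\alpha(x,y)=\bigl(T_\alpha x,\ \Phi_{a_1,\epsilon_1}(y)\bigr)$, where $(a_1,\epsilon_1)$ is the first digit and sign of $x$ and $\Phi_{a,\epsilon}$ is an explicit fractional linear map; this map preserves $d\nu=\frac{dx\,dy}{(1+xy)^2}$, and the $T_\alpha$-invariant density is the marginal
$$\rho_\alpha(x)=\frac{1}{\nu(\Omega_\alpha)}\int_{\,\Omega_\alpha\cap(\{x\}\times\mathbb{R})}\frac{dy}{(1+xy)^2}.$$
Thus the statement reduces to a purely geometric claim: $\Omega_\alpha$ is a finite union of product sets (``horizontal strips'') $I_i\times J_i$ with the $J_i$ intervals and with the $x$-projections $I_i$ forming a partition of $I_\alpha$ whose breakpoints all lie in $S$.

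To prove the geometric claim I would build $\Omega_\alpha$ by iteration and keep track of the fibres. Starting from a product box $I_\alpha\times J_0$ and applying $\bar T_\alpha$ cylinder by cylinder, on a \emph{full} cylinder $C$ (one with $T_\alpha(C)=I_\alpha$) the image $\bar T_\alpha\bigl((C\times J)\cap\Omega\bigr)$ is again a product $I_\alpha\times\Phi_C(J)$, since $T_\alpha$ is onto there and the fibre transformation depends only on the digits of $C$; the only non-full cylinders are the two abutting the endpoints $\alpha$ and $\alpha-1$. Hence a discontinuity of $x\mapsto(\text{fibre over }x)$ can only be created by the forward orbits of $\alpha$ and of $\alpha-1$, so after any number of steps the breakpoints lie in $\{T_\alpha^m(\alpha)\}_{m\geq 0}\cup\{T_\alpha^n(\alpha-1)\}_{n\geq 0}$. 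This is exactly where the matching hypothesis enters: condition $(\mathrm{II}')$ forces $T_\alpha^{k_1}(\alpha)=T_\alpha^{k_2}(\alpha-1)$ together with a matching of their symbolic pasts, so that once the construction has used the first $k_1$ iterates of $\alpha$ and the first $k_2$ of $\alpha-1$ the two ``defect orbits'' have merged and no new breakpoint is ever produced; the procedure then stabilises after finitely many steps. Since $\alpha=T_\alpha^0(\alpha)$ and $\alpha-1=T_\alpha^0(\alpha-1)$ are just the two outer endpoints of $I_\alpha$, and $|S|\leq k_1+k_2$, the partition $\{I_i\}$ has at most $k_1+k_2-1$ pieces.

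The remaining step is elementary bookkeeping. On each piece $I_i$ the fibre is a fixed union of intervals with constant endpoints $c,d$, and
$$\int_{c}^{d}\frac{dy}{(1+xy)^2}=\frac{d-c}{(1+xc)(1+xd)}=\frac{1}{x+1/d}-\frac{1}{x+1/c},$$
with the convention that a term coming from a zero endpoint vanishes; summing over pieces and sub-intervals gives precisely the form (\ref{density}), with each $A_i=\pm 1$, each $B_i$ the reciprocal of a $y$-corner of $\Omega_\alpha$, and, after regrouping terms that share a constant outer boundary, at most $k_1+k_2-1$ branches. Dividing by $\nu(\Omega_\alpha)$ normalises $\rho_\alpha$.

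The hard part is the geometric claim itself: making rigorous that $\Omega_\alpha$ is a finite union of horizontal strips and that $(\mathrm{II}')$ is exactly what closes the construction. Two difficulties stand out. First, for $\alpha<1/2$ the fibre maps $\Phi_{a,\epsilon}$ are less benign than in the Gauss case — the sign $\epsilon$ may be $-1$, so fibres need not be nested and could a priori proliferate — so one must show that the finite family of product domains attached to the partial endpoint orbits is genuinely $\bar T_\alpha$-invariant once it is consistent with $(\mathrm{II}')$. Second, $(\mathrm{II}')$ only asserts the merging of the two orbits at steps $k_1,k_2$ together with an algebraic relation between their pasts, and one has to deduce from this that \emph{no} further subdivision occurs at any later time, not merely at the merging step. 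I expect the cleanest route is an induction on the matching order $k=k_1+k_2$, bootstrapping from the cases $\alpha\in[\sqrt{2}-1,1]$ already treated by Nakada~(\cite{Nakada81}) and Kraaikamp et al.~(\cite{Kraaikamp}); the degenerate configurations — a piece $I_i$ shrinking to a point, two $y$-corners colliding, the two non-full endpoint cylinders interacting — are where genuine care will be needed.
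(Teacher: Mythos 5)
First, an important point of context: the paper does \emph{not} prove this statement. It is labelled Conjecture~\ref{NN} and supported only by numerical experiments (the table of fitted coefficients $A_\pm,B_\pm$ and Figure~\ref{invdensity338}); there is no argument in the paper to compare yours against. So any proof you produce would be going beyond the paper, not reproducing it.

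Your strategy --- pass to the natural extension $\bar T_\alpha$ on $\Omega_\alpha$ with invariant measure $\frac{dx\,dy}{(1+xy)^2}$, show $\Omega_\alpha$ is a finite union of horizontal strips with $x$-breakpoints in $S$, and integrate out $y$ --- is the standard and in my view the correct route; it is exactly how Nakada and Kraaikamp obtain the explicit densities for $\alpha\geq\sqrt2-1$. But as written the attempt has two genuine gaps, the first of which you yourself flag.

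The central gap is that the geometric claim \emph{is} the conjecture. You need that $\Omega_\alpha$ consists of finitely many rectangles whose $x$-sides have endpoints in $S$, and that the matching datum $(\mathrm{II}')$ guarantees that the iterative construction stabilises after the orbits merge. The matching condition is a statement about the one-dimensional forward orbits of $\alpha$ and $\alpha-1$ (equivalently, about the product of the digit matrices). To conclude that the \emph{two-dimensional} domain closes up you must also control the $y$-fibres over the merged point: the matrix identity in $(\mathrm{II}')$ does encode information about the inverse branches and hence about the $y$-direction, but this has to be extracted and shown to imply that the two ``defect fibres'' coincide from step $(k_1,k_2)$ onward, for all later iterates, not just at the merging step. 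Asserting that the matching ``forces no new breakpoint to ever be produced'' is precisely the content to be proved, and for $\alpha<\sqrt2-1$ the digit maps $\Phi_{a,\epsilon}$ with $\epsilon=-1$ reverse orientation on fibres, so the usual nesting/monotonicity argument from the Gauss case does not apply verbatim. Your proposed induction on $k_1+k_2$ is plausible but you give no inductive step, and the base cases only cover $\alpha\geq\sqrt2-1$.

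A second, smaller gap is in the bookkeeping at the end. Your fibre integral
$$\int_c^d\frac{dy}{(1+xy)^2}=\frac{1}{x+1/d}-\frac{1}{x+1/c}$$
gives \emph{two} hyperbola terms for each fibre interval $(c,d)$, whereas (\ref{density}) has a single term $\frac{A_i}{x+B_i}$ on each piece $I_i$ and the bound $r\leq k_1+k_2-1$ counts one branch per piece. To get from your expression to (\ref{density}) you either need the fibres to be anchored at $y=0$ (or $\infty$), so that one term drops, or you need an explicit cancellation pattern; ``regrouping terms that share a constant outer boundary'' is a gesture towards the latter but is not an argument. Without this, even granting the geometric claim, you have only shown that $\rho_\alpha$ is a piecewise rational function with poles in $\{-1/y : y\text{ a }y\text{-corner of }\Omega_\alpha\}$ --- weaker than (\ref{density}) and without the claimed bound on the number of branches.

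In short: the approach is the right one and is consistent with the one-dimensional formula (\ref{derivEntropy}) used elsewhere in the paper, but the attempt does not close the two points above, and in particular leaves the core two-dimensional stabilisation claim unproved --- which is exactly why the paper states this as a conjecture.
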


In all known cases, moreover, there exists exactly one $I_i$ which contains $\alpha$ and exactly one which contains $\alpha-1$; thus, on neighbourhoods of $\alpha$ and $\alpha-1$, the invariant density has the simple form
$\rho_{\alpha}\vert_{I_i}(x) = \frac{A_i}{x + B_i}$

As an example of such numerical evidence we report a numerical simulation of
the invariant density for some values of $\alpha$ in the interval
$[\frac{\sqrt{13}-3}{2},\frac{\sqrt{3}-1}{2}]$ where a matching of
type $(2,3)$ occurs. We fit the invariant density with the function $A_+/(x+B_+)$ on the interval $[\max\{S\}, \alpha]$ and with the function $A_-/(x+B_-)$ on $[\alpha-1, \min\{S\}]$.

$$
\begin{array}{|l|l|l|l|l|l|l|}
\hline 
& \alpha = 0.310 & \alpha = 0.320 & \alpha = \frac{1}{3} & \alpha = 0.338 & \alpha = 0.350 & \alpha = 0.360 \\
\hline
A_+ & 1.76114 & 1.76525 & 1.77603 & 1.78963 & 1.81981 & 1.84658 \\
B_+ & 1.64768 & 1.63487 & 1.62374 & 1.62987 & 1.64092 & 1.65138 \\ \hline
A_- & 1.77289 & 1.78874 & 1.81488 & 1.82411 & 1.84562 & 1.85959 \\ 
B_- & 2.66097 & 2.66081 & 2.66583 & 2.66751 & 2.66915 & 2.6658 \\ \hline
\end{array}
$$

Moreover, from these numerical data it is apparent that the leftmost branch of hyperbola is nothing else that a translation by $1$ of the rightmost one (i.e. $A_+ = A_-$, $B_- = B_++1$).

\begin{figure}[h]
   \centering
   \includegraphics[scale=0.28]{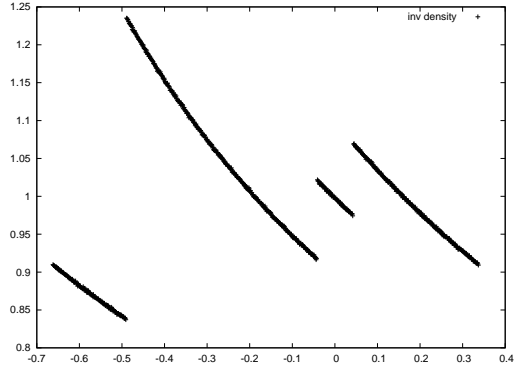}
   \caption{Invariant density for $\alpha = 0.338$}
\label{invdensity338}
\end{figure}

\subsection{Comparison with the entropy}

If $I \subset [0,1]$ is a matching interval, the knowledge of the invariant density for one single value of $\alpha \in I$ plus eq. (\ref{derivEntropy}) allows us to recover the entropy in the whole interval. Let $\alpha$ belong to an interval where a matching of type $(k_1, k_2)$ occurs and suppose, according to the previous conjecture, that on $[\overline{x}, \alpha]$ the invariant density has the form
$$\rho_{\alpha}(x) = \frac{A}{x+B}$$ for some $A, B \in \mathbb{R}$ and $\overline{x} = \max \{ T_{\alpha}^n(\alpha), 1 \leq n < k_1 \} \cup \{ T_{\alpha}^{m}(\alpha-1), 1 \leq m < k_2 \}$.
Then by (\ref{derivEntropy}), for $x < \alpha$ sufficiently close to $\alpha$

\begin{equation} \label{log} h(x) = \frac{h(\alpha)}{1+(k_2-k_1)A\log\left(\frac{B+\alpha}{B+x}\right)} \end{equation}
We think that the entropy has in general such form for values of $\alpha$ where a matching occurs.

Let us consider the particular case of the interval $[0.295 , 0.3042]$. In the region to the right of the big central plateau (i.e. for $\alpha > \frac{-3+\sqrt{13}}{2}$) the behaviour of entropy looks approximately linearly increasing, as conjectured in \cite{LuzziMarmi}, sect. 3. We will provide numerical evidence it actually has the logarithmic form given by equation \eqref{log} on the interval $[\frac{\sqrt{13}-3}{2},\frac{\sqrt{3}-1}{2}]$. To test this hypothesis, we proceed as follows:

\begin{enumerate}
\item We fit the data of the invariant density for $\alpha = 0.338$,
  obtaining the constants $A_+$ and $B_+$ which refer to the rightmost
  branch of hyperbola (the data are already in the previous table).

 \item We fit the data of the entropy already calculated (relative to
   the window $ [0.30277,0.3042]$) with the function $\eqref{log}$. We
   assume $A_+$ and $B_+$ as given constants and we look for the best
   possible value of $h(\alpha)$ (which we did not have from previous
   computations). The result given is $h(\alpha) \cong 3.28311$. In
   the figure we plot the obtained function in the known
   window, as well as a linear fit. In this interval, the difference between the two functions is negligible.
 (Figure \ref{outputsinistro})

\item In order to really distinguish between linear and logarithmic
  behaviour of the entropy, we computed some more numerical data for
  the entropy far away to the right but in the same matching
  interval. In this region the linear and logarithmic plots are
  clearly distinguishable, and the new points seem to perfectly agree
  with the logarithmic formula\footnote{Let us remark that the new
    values computed are just a few, but are more accurate than those
    in the interval $ [0.30277,0.3042]$ since we used the package CLN
    a C++ library to perform computations in arbitrary
    precision}. (Figure \ref{outputdestro}) 

\begin{figure}[htbp]
   \begin{minipage}{0.5\textwidth}
    \centering
    \includegraphics[scale=0.23]{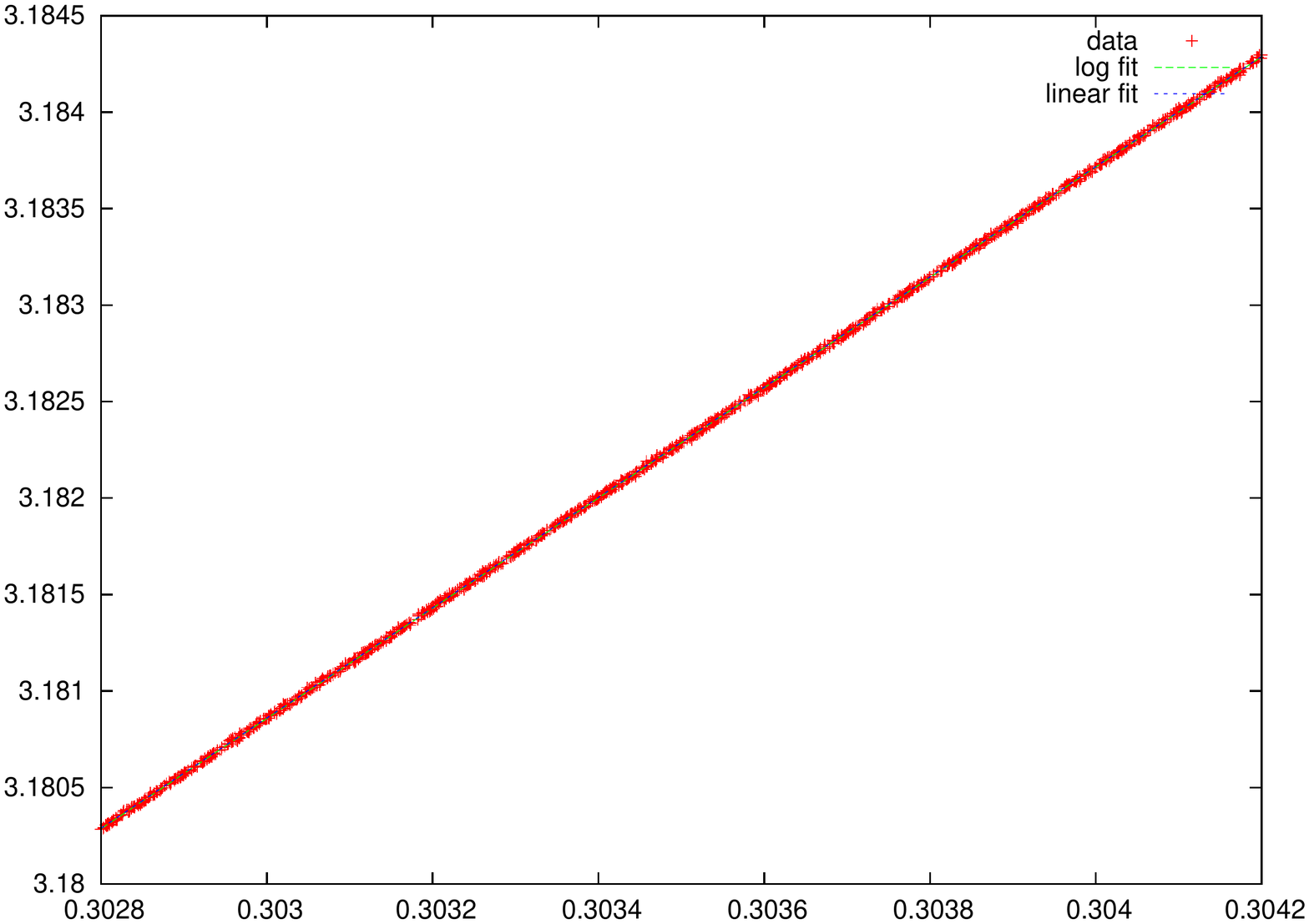}
   \caption{linear vs logarithmic fit, $0.3028 \leq \alpha \leq 0.3042$}
\label{outputsinistro} \end{minipage}
\begin{minipage}{0.5\textwidth}
     \includegraphics[scale=0.23]{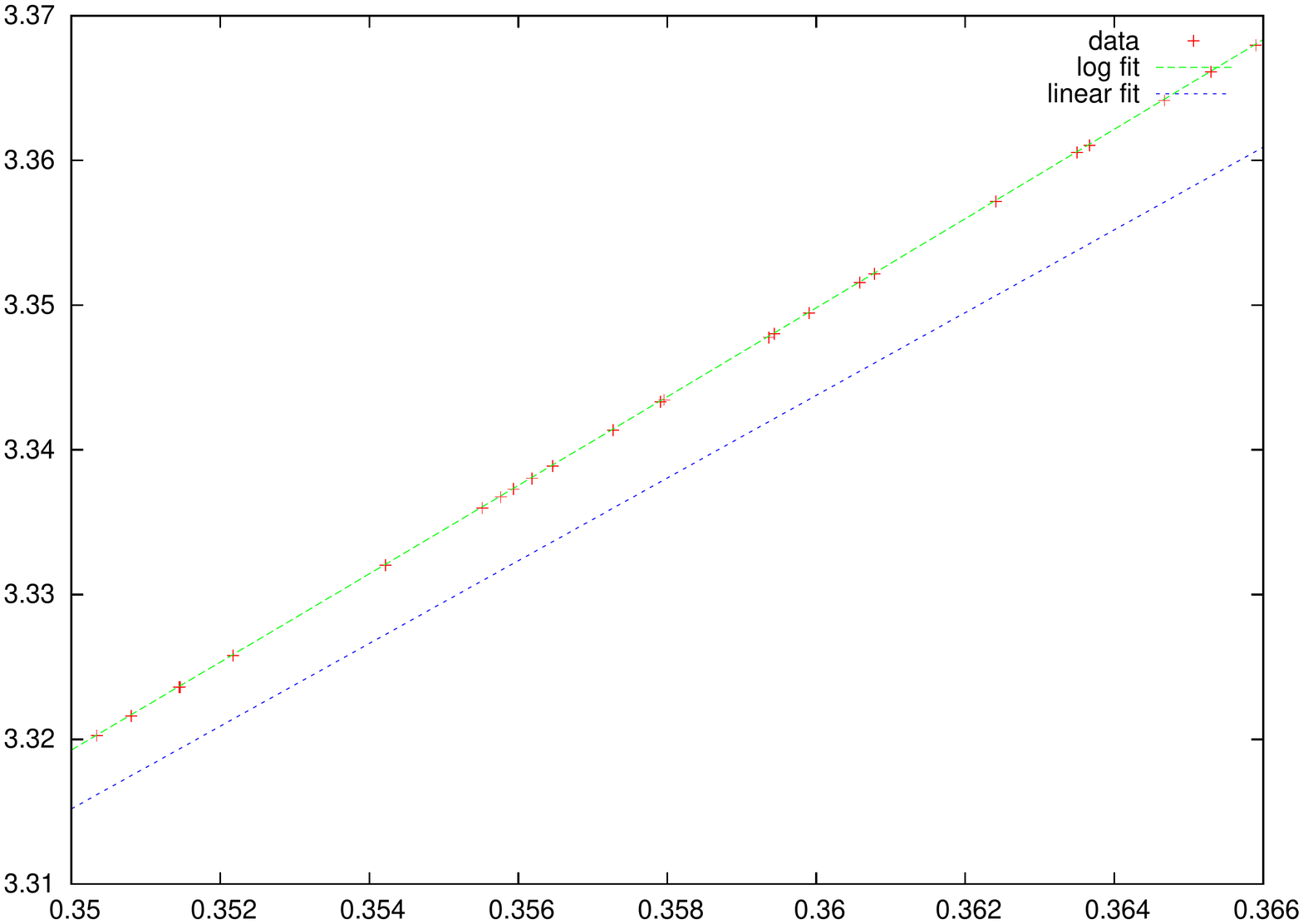}
  \caption{linear vs logarithmic fit, $0.35 \leq \alpha \leq 0.366$}
\label{outputdestro}   
\end{minipage}
\end{figure}

Notice these data agree
  with eq. \ref{log} also for $x > \alpha$, which is equivalent to say
  $\rho_\alpha(x) = \frac{A}{B+1+x}$ for $x$ in a right neighbourhood
  of $\alpha-1$.

\end{enumerate}

\section{Appendix}
In this appendix we give the proof of two simple results which are
of some relevance for the issues discussed in this paper.

\begin{proposition}
If $x_0$ is a quadratic surd then  $x_0$ is a preperiodic point for $T_\alpha$, $\alpha\in [0,1]$.
\end{proposition}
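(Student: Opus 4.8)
The plan is to show that the forward orbit $\{T_\alpha^n(x_0)\}_{n\ge 0}$ is contained in a finite set, so by the pigeonhole principle the orbit must eventually cycle, i.e.\ $x_0$ is preperiodic. The key observation is that each point $T_\alpha^n(x_0)$ is again a quadratic surd lying in the \emph{same} quadratic field $K=\mathbb{Q}(\sqrt{d})$ as $x_0$: indeed $T_\alpha$ acts on each cylinder as a M\"obius transformation with coefficients in $\mathbb{Z}$ (namely $z\mapsto \epsilon/z - c$, which is $T^{-c}SV^{e(\epsilon)}$ in the notation of Section~3.2), and M\"obius transformations with rational coefficients preserve $K$. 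So it suffices to bound the \emph{height} of the iterates: if I write $T_\alpha^n(x_0)=\frac{P_n+\sqrt{d}}{Q_n}$ in a suitable normalized form (with $Q_n\mid d-P_n^2$, as in the classical theory of reduced quadratic irrationals), I want to show $|P_n|$ and $|Q_n|$ stay bounded.

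First I would set up the normalization: writing $x_0$ as a root of an integer quadratic $A_0 t^2+B_0 t+C_0$ with discriminant $D=B_0^2-4A_0C_0>0$ (not a perfect square), every $T_\alpha^n(x_0)$ is a root of an integer quadratic with the \emph{same} discriminant $D$ up to squares — more precisely, one can arrange that all iterates satisfy integer quadratics $A_n t^2 + B_n t + C_n$ with $B_n^2 - 4A_nC_n = D$ fixed. (The map $z\mapsto \epsilon/z - c$ transforms the quadratic form $(A,B,C)$ into another integral form of the same discriminant: $z\mapsto 1/z$ sends $(A,B,C)$ to $(C,B,A)$, and $z\mapsto z-c$ sends $(A,B,C)$ to $(A, B+2Ac, Ac^2+Bc+C)$; the sign change $z\mapsto -z$ sends $(A,B,C)$ to $(A,-B,C)$.) Then I would use the fact that $T_\alpha^n(x_0)\in[\alpha-1,\alpha]\subset[-1,1]$ is bounded, which controls the iterates: since $A_n t^2+B_n t+C_n$ has a root $x_n$ in $[-1,1]$ and a conjugate root $x_n^*$, from $B_n^2-4A_nC_n=D$ and elementary estimates (e.g.\ $|C_n/A_n| = |x_n x_n^*|$ and $|B_n/A_n|=|x_n+x_n^*|$, combined with $x_n\in[-1,1]$) one gets that $|A_n|,|B_n|,|C_n|$ are bounded in terms of $D$ alone, \emph{provided} the conjugate root is also bounded.

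The main obstacle is precisely controlling the \emph{conjugate} root $x_n^*$: the first iterate can throw the conjugate far away, so the naive bound fails at $n=0$ or $n=1$. The standard fix — which I would carry out — is to observe that for $n$ large enough the point $x_n$ becomes a \emph{reduced} quadratic irrational in the appropriate sense (its conjugate lands in a bounded interval and stays there), because the $\alpha$-continued fraction algorithm, like the classical one, is eventually contracting on the conjugate coordinate: the natural extension of $T_\alpha$ maps a bounded region into itself, so once $(x_n, x_n^*)$ (or rather $(x_n, -1/x_n^*)$, the natural-extension coordinate) enters that bounded invariant region it never leaves. Hence for all $n\ge n_0$ both $x_n$ and $x_n^*$ lie in fixed bounded sets, so $(A_n,B_n,C_n)$ ranges over a finite set, the orbit is finite, and $x_0$ is preperiodic. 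I would cite the description of the natural extension domains from \cite{Kraaikamp}, \cite{NakadaNatsui} (already invoked in Section~2.3) for the boundedness of the invariant region. The remaining computations — the explicit transformation of quadratic forms and the elementary bound on a form of fixed discriminant with both roots bounded — are routine.
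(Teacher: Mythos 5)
Your proposal takes a genuinely different route from the paper. Both proofs start from the same algebraic skeleton: each iterate $x_n=T_\alpha^n(x_0)$ is a root of an integer quadratic $F_n(x)=A_nx^2+B_nx+C_n$ of the same discriminant $D$ as $F_0$, and the goal is to show that $(A_n,B_n,C_n)$ ranges over a finite set. The paper (following Charves) bounds the coefficients \emph{directly} by writing $A_n=F_0(p_{n-1}/q_{n-1})q_{n-1}^2$ and $C_n=F_0(p_n/q_n)q_n^2$ and invoking the mean value theorem together with the convergent-approximation estimate $|p_n/q_n-x_0|\leq C/(\alpha q_n^2)$ (which is exactly why the argument is restricted to $\alpha>0$). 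You instead bound the \emph{roots}: $x_n\in[\alpha-1,\alpha]$ is bounded by fiat, and you try to control the conjugate $x_n^*$. This is the reduced-quadratic-irrational route to Lagrange's theorem, and it can indeed be pushed through.

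However, the way you set up the remaining obstacle is not quite right, and the mechanism you invoke to resolve it does not do the job as stated. You name the obstacle as bounding $|x_n^*|$. In fact that bound is essentially free: since $A_n$ is a nonzero integer, $|x_n-x_n^*|=\sqrt{D}/|A_n|\leq\sqrt{D}$, so $|x_n^*|\leq 1+\sqrt{D}$ automatically. What actually needs an argument is the \emph{lower} bound on $|x_n-x_n^*|$, equivalently that $x_n^*$ eventually stays a fixed distance away from the interval $[\alpha-1,\alpha]$; without this $|A_n|=\sqrt{D}/|x_n-x_n^*|$ (and hence $|B_n|,|C_n|$) is not bounded above. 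This is not a consequence of the natural extension domain being bounded and forward-invariant: the natural-extension coordinate $y_n=q_{n-1}/q_n$ lies in a bounded set \emph{a priori}, but can approach $0$, and invariance of the domain says nothing about where the pair $(x_n,-1/x_n^*)$ lands relative to that domain, nor about the sign of $x_n^*$. What one actually needs — and what does the work — is the computation
$$-\frac{1}{x_n^*}=\frac{q_{n-1}x_0^*-p_{n-1}}{q_nx_0^*-p_n}=\frac{q_{n-1}}{q_n}\cdot\frac{x_0^*-p_{n-1}/q_{n-1}}{x_0^*-p_n/q_n},$$
together with $p_n/q_n\to x_0\neq x_0^*$: the second factor tends to $1$, so $-1/x_n^*$ is eventually positive and close to $q_{n-1}/q_n\in(0,1)$ (the latter inclusion uses that $q_n$ is strictly increasing, which holds for $\alpha>0$ since $\epsilon_n=-1$ forces $a_n\geq 2$). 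Hence $x_n^*<-1+\varepsilon$ eventually and $x_n-x_n^*\geq\alpha-\varepsilon>0$, which closes the gap. So your scheme is salvageable, but the convergence of the $\alpha$-CF convergents to $x_0$ is the genuine input — the same input, differently packaged, that drives the paper's argument — not natural-extension invariance.
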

For $\alpha=1$ this is the well known Lagrange Theorem, and this statement is known to be true for
 $\alpha=0$ and $\alpha\in [1/2,1]$ \cite{HartonoKraaikamp}. Since we did not find a reference containing a simple proof
of this fact for all $\alpha \in [0,1]$ we sketch it here, in few lines: this proof follows closely
 the classical  proof of Lagrange Theorem for regular continued fractions 
given by  \cite{Charves} which relies on approximation 
properties of convergents, therefore it works for $\alpha>0$. 

If $x_0$ is a quadratic surd then $F_0(x_0)=0$ for some  $F_0(x):=A_0x^2+B_0x+C_0$
quadratic polynomial with integer coefficients. 
On the other hand, since\footnote{To simplify notations we shall write $p_n, \ q_n$ instead of $p_{n,\alpha}, \ q_{n, \alpha}$. }
$x_0=\frac{p_{n-1}x_{n}+p_n}{q_{n-1}x_n+q_n}$,
setting
$F_n(x):=F_0(\frac{p_{n-1}x+p_n}{q_{n-1}x+q_n})(q_{n-1}x+q_n)^2$,
we get that $F_n(x_n)=F_0(x_0)=0$. 

Moreover $F_n(x)=A_nx^2+B_nx+C_n$
with
\begin{equation}\label{chiave}
A_n=F_0(p_{n-1}/q_{n-1})q_{n-1}^2,  \ \ C_n=F_0(p_{n}/q_{n})q_{n}^2, \ \  B_n^2-4A_nC_n=B_0^2-4A_0C_0.
\end{equation}

Both $A_n, \ B_n$ are bounded since:
$|F_0(p_n/q_n)|=|F_0(p_n/q_n)-F_0(x_0)|=|F_0'(\xi)| |\frac{p_n}{q_n}-x_0|\leq
\frac{C}{\alpha q_n^2}$; moreover from the last equation in \eqref{chiave} it follows that
$B_n$ are bounded as well.

\begin{proposition}
The variance $\sigma^2(\alpha)$ is constant for  $\alpha \in [\sqrt{2}-1, (\sqrt{5}-1)/2]$.  
\end{proposition}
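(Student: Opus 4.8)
The plan is to transport the question to the natural extensions of the maps $T_\alpha$: there the constancy of $\sigma^2$ will follow from the known isomorphism between these natural extensions, provided we check that the entropy observable is carried to a cohomologous one.

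Recall that $T_\alpha$ admits a natural extension $\bar T_\alpha\colon\Omega_\alpha\to\Omega_\alpha$, $\Omega_\alpha\subset\mathbb R^2$, with an invariant probability $\bar\mu_\alpha$ and factor map $\pi(x,y)=x$ satisfying $\pi\circ\bar T_\alpha=T_\alpha\circ\pi$ and $\pi_*\bar\mu_\alpha=\mu_\alpha$. For the entropy observable $f(x)=-2\log|x|$ (or for the truncated $f_\epsilon$ of \S2; the argument is the same, and $\sigma^2$ is well defined for $f$ itself because $\rho_\alpha$ is bounded, so $f\in L^2(\mu_\alpha)$) put $\bar f_\alpha:=f\circ\pi$; then $S^{\bar T_\alpha}_n\bar f_\alpha=(S^{T_\alpha}_nf)\circ\pi$, so the sequence $M_n$ defining $\sigma^2(\alpha)$ is the same for $(T_\alpha,\mu_\alpha,f)$ and for $(\bar T_\alpha,\bar\mu_\alpha,\bar f_\alpha)$. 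Now fix $\alpha,\alpha'\in[\sqrt2-1,\frac{\sqrt5-1}2]$. By \cite{Kraaikamp},\cite{NakadaNatsui} there is a measure-preserving conjugacy $\Psi\colon(\Omega_\alpha,\bar\mu_\alpha,\bar T_\alpha)\to(\Omega_{\alpha'},\bar\mu_{\alpha'},\bar T_{\alpha'})$; since $S^{\bar T_\alpha}_n(g\circ\Psi)=(S^{\bar T_{\alpha'}}_ng)\circ\Psi$, $\Psi_*\bar\mu_\alpha=\bar\mu_{\alpha'}$ and $\int g\circ\Psi\,d\bar\mu_\alpha=\int g\,d\bar\mu_{\alpha'}$, the variance of $\bar f_{\alpha'}$ for $\bar T_{\alpha'}$ equals the variance of $\bar f_{\alpha'}\circ\Psi$ for $\bar T_\alpha$. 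Finally, because $\bar f_\alpha$ and $\bar f_{\alpha'}$ are literally the same function $-2\log|x|$ of the first coordinate, it suffices to prove that $g:=\bar f_{\alpha'}\circ\Psi-\bar f_\alpha$ is an $L^2(\bar\mu_\alpha)$-coboundary: adding a coboundary does not change $\lim_n M_n$, hence the variances of $\bar f_{\alpha'}\circ\Psi$ and of $\bar f_\alpha$ for $\bar T_\alpha$ then coincide, and chaining the three identities gives $\sigma^2(\alpha')=\sigma^2(\alpha)$.

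The heart of the matter, and the step I expect to be the real work, is showing that $g$ is a coboundary. This uses the explicit description of $\Psi$ valid on the window $[\sqrt2-1,\frac{\sqrt5-1}2]$: there $\Omega_\alpha$ and $\Omega_{\alpha'}$ differ by rearranging only finitely many pieces, and $\Psi$ acts on each piece by a bounded number of elementary steps of the dynamics, so that for a.e.\ $(x,y)$ the point $\pi(\Psi(x,y))$ lies in a uniformly bounded window of the orbit of $x$. Consequently $S^{\bar T_\alpha}_ng=(S^{\bar T_{\alpha'}}_n\bar f_{\alpha'})\circ\Psi-S^{\bar T_\alpha}_n\bar f_\alpha$ differs, along each orbit, from $0$ by only boundedly many summands of the form $f\circ\pi\circ\bar T_\alpha^{j}$, each in $L^2(\bar\mu_\alpha)$; hence $\sup_n\|S^{\bar T_\alpha}_ng\|_{L^2(\bar\mu_\alpha)}<\infty$, which by the classical criterion means exactly that $g$ is an $L^2$-coboundary (in particular $\int g\,d\bar\mu_\alpha=0$, consistently with the entropy being constant on this interval). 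Extracting the required structure of $\Psi$ from the natural-extension constructions of \cite{Kraaikamp},\cite{NakadaNatsui} is the delicate point, and it is precisely the feature that fails outside the constant-entropy windows, consistently with $\sigma^2$ being genuinely non-constant there. Everything else --- invariance of the variance under conjugacy and under passage to the natural extension, and the fact that $L^2$-coboundaries leave $\lim_n M_n$ unchanged --- is routine and already implicit in \S2.
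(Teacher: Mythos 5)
Your overall strategy is the same as the paper's: pass to the natural extensions, transport the entropy observable through the measure-preserving conjugacy $\Psi$, and show that the transported observable differs from the original one by an $L^2$-coboundary, so that the asymptotic variances coincide. Where you and the paper diverge is precisely in the step you yourself flag as ``the heart of the matter,'' and there your argument has a genuine gap.

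The paper's route is concrete and self-contained: it uses the fact (from \cite{NakadaNatsui}) that $\Phi$ is a.e.\ differentiable with a \emph{diagonal} differential while $\tilde T_j$ have triangular differentials; reading off the $(1,1)$ entry of the chain rule $d\Phi\circ d\tilde T_1=d\tilde T_2\circ d\Phi$ gives an explicit cohomological identity
$$
\log\Bigl|\tfrac{\partial\tilde T_1^x}{\partial x}\Bigr|
=\log\Bigl|\tfrac{\partial\tilde T_2^x}{\partial x}\circ\Phi\Bigr|
+u-u\circ\tilde T_1,\qquad
u:=\log\Bigl|\tfrac{\partial\Phi^x}{\partial x}\Bigr|,
$$
so the coboundary $u$ is exhibited in closed form (bounded, hence in $L^2$), and one then applies the elementary lemma that coboundaries do not change $\lim_n\int(S_nv/\sqrt n)^2$. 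Your route instead appeals to the Leonov-type criterion ``$\sup_n\lVert S_n g\rVert_{L^2}<\infty\Rightarrow g$ is an $L^2$-coboundary'' and tries to verify the boundedness by a ``bounded rearrangement'' heuristic. As stated, the heuristic is not quite coherent: $\pi(\Psi(x,y))$ lies in $I_{\alpha'}$ while the $T_\alpha$-orbit of $x=\pi(x,y)$ lies in $I_\alpha$, so it cannot literally ``lie in a bounded window of the orbit of $x$.'' What is true is that the natural-extension conjugacy is built by cutting and pasting finitely many pieces, each moved by finitely many steps of the two-dimensional dynamics; but turning this picture into the estimate $\sup_n\lVert S_n g\rVert_2<\infty$ requires comparing Birkhoff sums of $T_\alpha$ with those of $T_{\alpha'}$ after a nonconstant, piece-dependent time shift, and that comparison is not a simple telescoping. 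None of this is carried out in your sketch, and it is exactly the content that the paper's diagonal-differential computation replaces by a one-line chain-rule identity. In short: the framing is right, the reduction to ``show $g$ is a coboundary'' is right, but your proposed mechanism for proving that claim is both different from the paper's and not actually established.
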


This result relies on the fact that for all $\alpha\in
[\sqrt{2}-1, (\sqrt{5}-1)/2]$ the maps  $T_\alpha$  have natural extensions
$\tilde T_\alpha$ which are all isomorphic to $\tilde T_{1/2}$.  In
the following we shall prove the claim for $\alpha \in [\sqrt{2}-1, 1/2]$
  and we shall write $T_1$ instead of $T_\alpha$ and $T_2$ instead of
  $T_{1/2}$.  So $T_j: I_j\to I_j, \ (j=1,2)$ are 1-dimensional map
  with invariant measure $\mu_j$; $\tilde T_j: \tilde I_j\to \tilde
  I_j, \ (j=1,2)$ are the corresponding 2-dimensional representations
  of the natural extension with invariant measure $\tilde\mu_j$, and
  $\Phi: \tilde I_1\to \tilde I_2$ is the (measurable) isomorphism
$$ \Phi \circ \tilde{T}_1 = \tilde{T}_2 \circ \Phi, \ \ \Phi_{*}\tilde\mu_1=\tilde\mu_2 $$
First let us point out (see \cite{NakadaNatsui} pg 1222-1223) that 
$\Phi$ is almost everywhere differentiable and has a diagonal differential; 
moreover $\tilde T_j$ are almost everywhere differentiable as well and have triangular differential.
Therefore
\begin{equation}
 d\Phi\vert_{T_1(x,y)} d\tilde{T}_1 \vert_{(x,y)} = d\tilde{T}_2\vert_{\Phi(x,y)} d\Phi_{(x,y)} 
\end{equation}
and it ie easy to check that, setting $\tilde T_j^x$ the first component of $\tilde T_j$, a scalar analogue  holds as well
\begin{equation}
\frac{\partial \Phi^x}{\partial x}\vert_{T_1(x,y)} \frac{\partial \tilde{T}_1^x}{\partial x}\vert_{(x,y)} = \frac{\partial \tilde{T}_2^x}{\partial x}\vert_{\Phi(x,y)} \frac{\partial \Phi^x}{\partial x}\vert_{(x,y)}
\end{equation}
So we get that, for all $k$,

$$ \log \left| \frac{\partial \tilde{T}_1^x}{\partial x} \right| =
\log \left| \frac{\partial \tilde{T}_2^x}{\partial x} \circ \Phi
\right| + \log \left| \frac{\partial \Phi^x}{\partial x}  \right| - \log \left| \frac{\partial \Phi^x}{\partial x}
\circ \tilde{T}_1\right| $$ 
Since $\tilde T_1^x$ is $\tilde \mu_1$-measure preserving $\int_{\tilde I_1} \log \left|
\frac{\partial \Phi^x}{\partial x} \right| - \log \left|
\frac{\partial \Phi^x}{\partial x} \circ \tilde{T}_1\right| d \tilde \mu_1 =0$; so,

taking into account that $\Phi_{\tilde{\mu_1}}=\tilde\mu_2 $ we get

$$\int_{\tilde I_1} \log \left| \frac{\partial \tilde{T}_1^x}{\partial
  x} \right| d \tilde \mu_1 = \int_{\tilde I_1} \log \left|
\frac{\partial \tilde{T}_2^x}{\partial x} \circ \Phi \right| d \tilde
\mu_1 = \int_{\tilde I_2} \log \left| \frac{\partial
  \tilde{T}_2^x}{\partial x} \right| d \tilde \mu_2 :=m.
$$

Let us define 
$g_1:= \log \left| \frac{\partial
  \tilde{T}_1^x}{\partial x} \right|$ 
and 
$g_2:= \log \left|
\frac{\partial \tilde{T}_2^x}{\partial x} \right|$ 
(so that
$\int_{\tilde I_1} g_1 d \tilde \mu_1=\int_{\tilde I_2} g_2 d \tilde
\mu_2=0$) and $S_N^Tg:=\sum_{k=0}^{N-1} g \circ T^k$; we easily see that
$$S_N^{\tilde{T}_1} g_1 = S_N^{\tilde{T}_2} g_1 \circ \Phi
\log \left| \frac{\partial \Phi^x}{\partial x} \circ \tilde{T}_1^k \right| - \log \left| \frac{\partial \Phi^x}{\partial x} \circ \tilde{T}_1^{k+1}\right| $$
which means that 
$S_N^{\tilde{T}_1} g_1$ and $S_N^{\tilde{T}_2} g_2 \circ \Phi$ differ by a coboundary.

\begin{lemma}
Let $u,v$ be two observables such that
\begin{enumerate}
\item $\lim_{N\to +\infty}\int(\frac{S_N v}{\sqrt N})^2d\mu=l \in \mathbb{R}$;
\item $u=v+ (f- f\circ T)$ for some $f \in L^2$. 
\end{enumerate} 
Then  
 $$\lim_{N\to +\infty}\int(\frac{S_N v}{\sqrt 
N})^2d\mu= \lim_{N\to +\infty}\int(\frac{S_N u}{\sqrt N})^2d\mu.$$
\end{lemma}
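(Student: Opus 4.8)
The plan is to use the telescoping property of the Birkhoff sum of a coboundary. Set $g := f - f\circ T$, so that hypothesis (2) reads $u = v + g$, hence $S_N u = S_N v + S_N g$. Since $S_N g = \sum_{k=0}^{N-1}\bigl(f\circ T^k - f\circ T^{k+1}\bigr) = f - f\circ T^N$, we obtain the pointwise identity
$$\frac{S_N u}{\sqrt N} \;=\; \frac{S_N v}{\sqrt N} \;+\; \frac{f - f\circ T^N}{\sqrt N},$$
so the two normalized sums differ by a term which is easy to control in $L^2(\mu)$.

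Next I would estimate that error term. Because $\mu$ is $T$-invariant, $\|f\circ T^N\|_{L^2(\mu)} = \|f\|_{L^2(\mu)}$ for every $N$, so by the triangle inequality
$$\left\| \frac{f - f\circ T^N}{\sqrt N}\right\|_{L^2(\mu)} \;\le\; \frac{\|f\|_{L^2(\mu)} + \|f\circ T^N\|_{L^2(\mu)}}{\sqrt N} \;=\; \frac{2\,\|f\|_{L^2(\mu)}}{\sqrt N} \;\longrightarrow\; 0 \quad (N\to\infty);$$
here one uses that $f\in L^2(\mu)$, granted by hypothesis (2), to make every quantity above finite.

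Finally, applying the triangle inequality in $L^2(\mu)$ to the first identity,
$$\left| \,\left\| \frac{S_N u}{\sqrt N}\right\|_{L^2(\mu)} - \left\| \frac{S_N v}{\sqrt N}\right\|_{L^2(\mu)} \right| \;\le\; \left\| \frac{f - f\circ T^N}{\sqrt N}\right\|_{L^2(\mu)} \;\longrightarrow\; 0.$$
By hypothesis (1), $\|S_N v/\sqrt N\|_{L^2(\mu)}^2 \to l$, hence $\|S_N v/\sqrt N\|_{L^2(\mu)} \to \sqrt l$; combined with the previous display this gives $\|S_N u/\sqrt N\|_{L^2(\mu)} \to \sqrt l$, and squaring yields $\int (S_N u/\sqrt N)^2\,d\mu \to l$, which is exactly the assertion (and in particular the limit on the $u$ side exists). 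There is essentially no obstacle here: the argument is just two applications of the triangle inequality, and the only hypotheses actually invoked are the measure-preserving property of $T$ and $f\in L^2(\mu)$ — this is the classical fact that adding a coboundary does not alter the asymptotic variance of Birkhoff sums, which is precisely what we need to conclude that $S_N^{\tilde T_1}g_1$ and $S_N^{\tilde T_2}g_2\circ\Phi$ have the same limiting variance.
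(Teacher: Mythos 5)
Your proof is correct. The paper actually states this lemma without giving a proof (it is applied immediately in the appendix), so there is no argument to compare against; but the route you take --- telescoping $S_N(f-f\circ T)=f-f\circ T^N$, bounding it in $L^2(\mu)$ by $2\|f\|_{L^2}/\sqrt N$ using $T$-invariance, and then applying the reverse triangle inequality for the $L^2$ norm --- is the standard argument and surely the one the authors had in mind. The only micro-remark worth adding is that passing from $\|S_N v/\sqrt N\|^2\to l$ to $\|S_N v/\sqrt N\|\to\sqrt l$ uses $l\ge 0$, which is not stated but is automatic since each term $\int(S_N v/\sqrt N)^2\,d\mu$ is nonnegative.
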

The lemma implies  
\begin{small}\begin{equation}\label{limit}
\lim_{N\to +\infty}\int_{\tilde I_1}  \left(\frac{ S_N^{\tilde{T}_1} g_1}{\sqrt N}\right)^2 d \tilde \mu_1=\lim_{N\to +\infty}\int_{\tilde I_2}\left(\frac{ S_N^{\tilde{T}_2} g_2 }{\sqrt N}\right)^2 d \tilde \mu_2
\end{equation}\end{small}
This information can be translated back to the original systems:
since  $\frac{\partial\tilde{T}_1^x}{\partial x}\vert_{(x,y)} =
T_1'(x)$,  $\frac{\partial\tilde{T}_2^x}{\partial x}\vert_{(x,y)} =
T_2'(x)$ if we define
$$G_1 := \log |T_1'(x)| - \int_{I_1} \log |T_1'(x)| d\mu_1$$ 
 $$G_2 = \log |T_2'(x)| - \int_{I_2} \log |T_2'(x)| d\mu_2$$ 
we get
 $g_1(x,y) = G_1(x)$ and $g_2(x,y) = G_2(x)$; therefore $S_N^{\tilde{T}_1}g_1 = S_N^{T_1}G_1$ and $S_N^{\tilde{T}_2}g_2 = S_N^{T_2}G_2$.
Finally, by equation \eqref{limit}, we get
$$
\lim_{N\to +\infty}
\int_{I_1} \left( \frac{S_N^{T_1}G_1}{\sqrt{N}} \right)^2 d\mu_1 = 
\lim_{N\to +\infty}
\int_{I_2} \left( \frac{S_N^{T_2}G_2}{\sqrt{N}} \right)^2 d\mu_2$$

\subsection{Tables}\label{tables}

\def\SomeSpc{5pt}
\begin{flushleft}
\scalebox{0.80}{$\begin{array}{|llll|llll|}
\hline
\VSpc{1.4em}{0.3em}
(k_1\; k_2) & \textup{size} & \left(\alpha^-\right., & \left.\alpha^+\right) & (k_1\; k_2) & \textup{size} & \left(\alpha^-\right., & \left.\alpha^+\right)  \\[\SomeSpc]
\hline
\VSpc{1.4em}{0.3em}
(3\;9) & \hbox{7.69e-4} & \left( \frac{-8 + \sqrt{82}}{9}\right.\,,\,&\left.\frac{-2 + \sqrt{5}}{2}\right) &
(8\;6) & \hbox{6.42e-5} & \left( \frac{-33 + \sqrt{2305}}{64}\right.\,,\,&\left.\frac{-77 + \sqrt{7221}}{34}\right) \\[\SomeSpc]
(2\;8)
& \hbox{3.68e-3} & \left( -4 + \sqrt{17} \VSpc{1.2em}{0.3em}\right.\,,\,&\left.\frac{-7 + \sqrt{77}}{14}\right) &
(5\;5)
& \hbox{1.46e-3} & \left( \frac{-7 + \sqrt{101}}{13}\right.\,,\,&\left.\VSpc{1.2em}{0.3em} -2 + \sqrt{5}\right) \\[\SomeSpc]
(3\;8) & \hbox{1.11e-3} & \left( \frac{-7 + \sqrt{65}}{8}\right.\,,\,&\left.\frac{-7 + 3\,\sqrt{7}}{7}\right) &
(2\;4) & \hbox{2.77e-2} & \left( \VSpc{1.2em}{0.3em} -2 + \sqrt{5}\right.\,,\,&\left.\frac{-3 + \sqrt{21}}{6}\right) \\[\SomeSpc]
(2\;7) & \hbox{5.44e-3} & \left( \frac{-7 + \sqrt{53}}{2}\right.\,,\,&\left.\frac{-3 + \sqrt{15}}{6}\right) &
(3\;6) & \hbox{2.1e-3} & \left( \frac{-7 + \sqrt{65}}{4}\right.\,,\,&\left.\frac{-6 + 4\,\sqrt{5}}{11}\right) \\[\SomeSpc]
(3\;8) & \hbox{6.98e-4} & \left( \frac{-19 + \sqrt{445}}{14}\right.\,,\,&\left.\frac{-9 + 2\,\sqrt{30}}{13}\right) &
(4\;6) & \hbox{7.02e-4} & \left( \frac{-11 + \sqrt{226}}{15}\right.\,,\,&\left.\frac{-23 + 3\,\sqrt{93}}{22}\right) \\[\SomeSpc]
(3\;7) & \hbox{1.69e-3} & \left( \frac{-6 + 5\,\sqrt{2}}{7}\right.\,,\,&\left.\frac{-3 + 2\,\sqrt{3}}{3}\right) &
(3\;5) & \hbox{3.97e-3} & \left( \frac{-5 + \sqrt{37}}{4}\right.\,,\,&\left.\frac{-9 + \sqrt{165}}{14}\right) \\[\SomeSpc]
(4\;7) & \hbox{8.12e-4} & \left( \frac{-17 + \sqrt{445}}{26}\right.\,,\,&\left.\frac{-3 + \sqrt{11}}{2}\right) &
(4\;6) & \hbox{5.77e-4} & \left( \frac{-13 + \sqrt{257}}{11}\right.\,,\,&\left.\frac{-2 + 2\,\sqrt{2}}{3}\right) \\[\SomeSpc]
(2\;6) & \hbox{8.54e-3} & \left( \VSpc{1.2em}{0.3em}-3 + \sqrt{10}\right.\,,\,&\left.\frac{-5 + 3\,\sqrt{5}}{10}\right) &
(4\;5) & \hbox{1.51e-3} & \left( \frac{-15 + \sqrt{445}}{22}\right.\,,\,&\left.\frac{-8 + 3\,\sqrt{11}}{7}\right) \\[\SomeSpc]
(3\;8) & \hbox{6.06e-4} & \left( \frac{-11 + \sqrt{145}}{6}\right.\,,\,&\left.\frac{-10 + 2\,\sqrt{42}}{17}\right) &
(5\;5) & \hbox{7.88e-4} & \left( \frac{-10 + \sqrt{226}}{18}\right.\,,\,&\left.\frac{-23 + 5\,\sqrt{29}}{14}\right) \\[\SomeSpc]
(3\;7) & \hbox{1.12e-3} & \left( \frac{-8 + \sqrt{82}}{6}\right.\,,\,&\left.\frac{-15 + \sqrt{357}}{22}\right) &
(3\;4) & \hbox{1.02e-2} & \left( \frac{-3 + \sqrt{17}}{4}\right.\,,\,&\left.\frac{-3 + \sqrt{15}}{3}\right) \\[\SomeSpc]
(3\;6) & \hbox{2.76e-3} & \left( \frac{-5 + \sqrt{37}}{6}\right.\,,\,&\left.\frac{-5 + \sqrt{35}}{5}\right) &
(4\;6) & \hbox{8.86e-4} & \left( \frac{-11 + \sqrt{170}}{7}\right.\,,\,&\left.\frac{-19 + 3\,\sqrt{93}}{34}\right) \\[\SomeSpc]
(4\;6) & \hbox{1.34e-3} & \left( \frac{-7 + \sqrt{82}}{11}\right.\,,\,&\left.\frac{-15 + \sqrt{285}}{10}\right) &
(4\;5) & \hbox{1.78e-3} & \left( \frac{-15 + \sqrt{365}}{14}\right.\,,\,&\left.\frac{-7 + 3\,\sqrt{11}}{10}\right) \\[\SomeSpc]
(9\;7) & \hbox{2.38e-5} & \left( \frac{-51 + 13\,\sqrt{29}}{100}\right.\,,\,&\left.\frac{-117 + \sqrt{15621}}{42}\right) &
(5\;5) & \hbox{7.09e-4} & \left( \frac{-11 + \sqrt{257}}{17}\right.\,,\,&\left.\frac{-6 + 2\,\sqrt{14}}{5}\right) \\[\SomeSpc]
(5\;6) & \hbox{7.91e-4} & \left( \frac{-9 + \sqrt{145}}{16}\right.\,,\,&\left.\frac{-10 + 2\,\sqrt{30}}{5}\right) &
(8\;6) & \hbox{2.73e-5} & \left( \frac{-54 + \sqrt{7057}}{101}\right.\,,\,&\left.\frac{-127 + 7\,\sqrt{453}}{74}\right) \\[\SomeSpc]
(9\;7) & \hbox{2.25e-5} & \left( \frac{-53 + \sqrt{5185}}{99}\right.\,,\,&\left.\frac{-30 + 4\,\sqrt{66}}{13}\right) &
(4\;4) & \hbox{5.24e-3} & \left( \frac{-4 + \sqrt{37}}{7}\right.\,,\,&\left.\frac{-3 + \sqrt{13}}{2}\right) \\[\SomeSpc]
(10\;7) & \hbox{1.54e-5} & \left( \frac{-127 + \sqrt{30629}}{250}\right.\,,\,&\left.\frac{-73 + \sqrt{6083}}{26}\right) &
(8\;6) & \hbox{2.73e-5} & \left( \frac{-54 + \sqrt{7057}}{101}\right.\,,\,&\left.\frac{-127 + 7\,\sqrt{453}}{74}\right) \\[\SomeSpc]
(2\;5) & \hbox{1.45e-2} & \left( \frac{-5 + \sqrt{29}}{2}\right.\,,\,&\left.\frac{-1 + \sqrt{2}}{2}\right) &
(2\;3) & \hbox{6.32e-2} & \left( \frac{-3 + \sqrt{13}}{2}\right.\,,\,&\left.\frac{-1 + \sqrt{3}}{2}\right) \\[\SomeSpc]
(3\;8) & \hbox{6.57e-4} & \left( \frac{-23 + \sqrt{629}}{10}\right.\,,\,&\left.\frac{-10 + \sqrt{195}}{19}\right) &
(4\;6) & \hbox{6.9e-4} & \left( \frac{-13 + \sqrt{290}}{11}\right.\,,\,&\left.\frac{-23 + \sqrt{1365}}{38}\right) \\[\SomeSpc]
(3\;7) & \hbox{1.06e-3} & \left( \frac{-9 + \sqrt{101}}{5}\right.\,,\,&\left.\frac{-4 + \sqrt{30}}{7}\right) &
(4\;5) & \hbox{1.72e-3} & \left( \frac{-15 + \sqrt{533}}{22}\right.\,,\,&\left.\frac{-4 + \sqrt{30}}{4}\right) \\[\SomeSpc]
(3\;6) & \hbox{1.98e-3} & \left( \frac{-13 + \sqrt{229}}{10}\right.\,,\,&\left.\frac{-2 + \sqrt{7}}{3}\right) &
(3\;4) & \hbox{9.87e-3} & \left( \frac{-7 + \sqrt{85}}{6}\right.\,,\,&\left.\frac{-3 + 2\,\sqrt{6}}{5}\right) \\[\SomeSpc]
(4\;6) & \hbox{7.42e-4} & \left( \frac{-10 + \sqrt{170}}{14}\right.\,,\,&\left.\frac{-7 + \sqrt{69}}{6}\right) &
(4\;5) & \hbox{1.45e-3} & \left( \frac{-9 + \sqrt{145}}{8}\right.\,,\,&\left.\frac{-8 + 2\,\sqrt{42}}{13}\right) \\[\SomeSpc]
(9\;7) & \hbox{1.03e-5} & \left( \frac{-81 + \sqrt{13226}}{155}\right.\,,\,&\left.\frac{-187 + 3\,\sqrt{4669}}{82}\right) &
(4\;4) & \hbox{3.82e-3} & \left( \frac{-5 + \sqrt{65}}{8}\right.\,,\,&\left.\frac{-11 + \sqrt{221}}{10}\right) \\[\SomeSpc]
(3\;5) & \hbox{4.94e-3} & \left( \frac{-4 + \sqrt{26}}{5}\right.\,,\,&\left.\frac{-2 + \sqrt{6}}{2}\right) &
(5\;5) & \hbox{6.75e-4} & \left( \frac{-13 + 5\,\sqrt{13}}{13}\right.\,,\,&\left.\frac{-2 + \sqrt{10}}{3}\right) \\[\SomeSpc]
(4\;6) & \hbox{8.44e-4} & \left( \frac{-10 + \sqrt{145}}{9}\right.\,,\,&\left.\frac{-19 + 3\,\sqrt{69}}{26}\right) &
(3\;3) & \hbox{2.68e-2} & \left( \frac{-2 + \sqrt{10}}{3}\right.\,,\,&\left.\VSpc{1.2em}{0.3em}-1 + \sqrt{2}\right) \\[\SomeSpc]
(7\;6) & \hbox{1.11e-4} & \left( \frac{-25 + \sqrt{1297}}{48}\right.\,,\,&\left.\frac{-29 + \sqrt{1023}}{13}\right) &
(2\;2) & \hbox{2.04e-1} & \left( \VSpc{1.2em}{0.3em}-1 + \sqrt{2}\right.\,,\,&\left.\frac{-1 + \sqrt{5}}{2}\right) \\[\SomeSpc]
(4\;5) & \hbox{2.45e-3} & \left( \frac{-11 + \sqrt{229}}{18}\right.\,,\,&\left.\frac{-3 + 2\,\sqrt{3}}{2}\right) &
(2\;1)
&
\hbox{3.82e-1}
& \left( \frac{-1 + \sqrt{5}}{2}\right.\,,\,&
\left.1
\VSpc{1.2em}{0.3em}\right]
\\[\SomeSpc]
\hline
\end{array}$}
\end{flushleft}
A sample of matching intervals found as in section \ref{CAmatching}.

\begin{flushleft}
\scalebox{0.90}{$\begin{array}{|cccc|}
\hline
\VSpc{1.4em}{0.3em}
(k_1\; k_2) & \textup{size} & \left(\alpha^-\right., & \left.\alpha^+\right) \\ 
\hline
\VSpc{1.4em}{0.3em}
(257\;257) & \hbox{5.43e-201}  & \left( \hbox to 2cm{\dotfill}\right.\,,\,&\left.\hbox to 2cm{\dotfill}\right)  \\[6pt]
(129\;129) & \hbox{7.27e-101} & \left( \hbox to 2cm{\dotfill}\right.\,,\,&\left.\hbox to 2cm{\dotfill}\right)    \\[6pt]  
(65\;65)   & \hbox{7.98e-51} & \left( \hbox to 2cm{\dotfill}\right.\,,\,&\left.\hbox to 2cm{\dotfill}\right)   \\[6pt] 
(33\;33) & \hbox{ 8.81e-26}  & \left(\begin{array}{l}
\scriptstyle
\left\{\phantom{\sqrt{1}}\kern-9pt
-1051803916417 \right. \\
\scriptstyle
 \left. \;+\; 5\,\sqrt{110424870216034832616745}\kern2pt\right\}/\\
\scriptstyle
\phantom{\sqrt{2}}
1576491320449
\end{array}
\right.,\,&\left. \begin{array}{l}
\\
-1 + \frac{\sqrt{31529826409}}{128045} \\
\\
\end{array}
\right)    \\[8pt]    
(17\;17) & \hbox{2.78e-13} & \left(-1 + \frac{\sqrt{31529826409}}{128045}\right.\,,\,&\left.\frac{-433 + \sqrt{467857}}{649}\right)  \\[6pt] 
(9\;9) & \hbox{5.2e-7} & \left( \frac{-433 + \sqrt{467857}}{649}\right.\,,\,&\left.\frac{-13 + 5\,\sqrt{13}}{13}\right)  \\[6pt] 
(5\;5) & \hbox{6.75e-4} & \left( \frac{-13 + 5\,\sqrt{13}}{13}\right.\,,\,&\left.\frac{-2 + \sqrt{10}}{3}\right)    \\[6pt]
(3\;3) & \hbox{2.68e-2} & \left( \frac{-2 + \sqrt{10}}{3}\right.\,,\,&\left.{-1 + \sqrt{2}}\right)                   \\[6pt]
(2\;2) & \hbox{2.04e-1} & \left( {-1 + \sqrt{2}}\right.\,,\,&\left.\frac{-1 + \sqrt{5}}{2}\right)                    \\[6pt]
\hline
\end{array}$}
\end{flushleft}
A chain of adjacent matching intervals (see section \ref{1accum})

\noindent 
\textsc{Dipartimento di Matematica, Universit\`a di Pisa}, Largo Bruno Pontecorvo 5, 56127 Pisa, Italy. e-mail: carminat@dm.unipi.it \newline
\textsc{Scuola Normale Superiore}, Piazza dei Cavalieri 7, 56123 Pisa, Italy. e-mail: s.marmi@sns.it \newline
\textsc{Scuola Normale Superiore}, Piazza dei Cavalieri 7, 56123 Pisa, Italy. e-mail: a.profeti@sns.it \newline
\textsc{Department of Mathematics, Harvard University}, 1 Oxford St, Cambridge MA 02138, U.S.A. e-mail: tiozzo@math.harvard.edu
\end{document}